\documentclass[12pt]{artformath}
\begin{document}
\title{Harmonic metrics of $\mathrm{SO}_{0}(n,n)$-Higgs bundles in the Hitchin section on non-compact hyperbolic surfaces}
\author{Weihan Ma \thanks{Chern Institute of Mathematics and LPMC, Nankai University, Tianjin 300071, China
\\ \email{1120210009@mail.nankai.edu.cn}}}
\date{}
\maketitle
\begin{abstract}
    Let $X$ be a Riemann surface. Hitchin constructed the $G$-Higgs bundles in the Hitchin section for a split real form $G$ of a complex simple Lie group,using the canonical line bundle $K$ and some holomorphic differentials $\symbfit{q}$. We study the case of ${\mathrm{SO}_0(n,n)}$. In our work, we establish the existence of harmonic metrics for these Higgs bundles, which are compatible with the ${\mathrm{SO}_0(n,n)}$-structure on any non-compact hyperbolic Riemann surface. Furthermore, these harmonic metrics weakly dominate $h_X$, the natural diagonal harmonic metric induced by the unique complete K\"ahler hyperbolic metric $g_X$ on $X$. Assuming these holomorphic differentials are all bounded with respect to $g_X$, we prove the uniqueness of such a harmonic metric.
    \keywords{Higgs bundles, harmonic metrics, non-compact surfaces, Hitchin section}
    \subjclass{53C07, 58E15, 14D21, 81T13.}
\end{abstract}

\tableofcontents

\section{Introduction}
Consider a Riemann surface $X$ with its canonical line bundle $K$. An $\mathrm{SL}(n,\mathbb{C})$-Higgs bundle on $X$ is a pair $\qty(E,\theta)$ consisting of a rank $n$ holomorphic vector bundle $E$ of trivial determinant and a Higgs field $\theta$ which is a trace-free holomorphic section of $\operatorname{End}\qty(E) \otimes K$. Given a Hermitian metric $h$ on $E$, we can obtain the Chern connection $\nabla_h$ associated with $h$. The metric $h$ is called a harmonic metric of the Higgs bundle $\qty(E,\theta)$ if it satisfies the following equation:
\begin{align*}
F\qty(h) + \comm{\theta}{\theta^{*h}} = 0,
\end{align*}
here $F\qty(h)$ is the curvature of Chern connection $\nabla_h$ and $\theta^{*h}$ is the adjoint of $\theta$ with respect to $h$. It is worth noting that this equation is equivalent to the condition that the connection $\mathbb{D}_h=\nabla_h + \theta + \theta^{*h}$ is flat.

This equation, introduced by Hitchin \cite{hitchin1987self}, is now called the Hitchin equation. For a compact Riemann surface $X$, the celebrated results of Hitchin \cite{hitchin1987self} and Simpson \cite{simpson1988constructing} show that a Higgs bundle $\qty(E,\theta)$ admits a harmonic metric $h$ such that $\det \qty(h)=1$ if and only if $\qty(E,\theta)$ is polystable. Moreover, when $\qty(E,\theta)$ is stable, such a harmonic metric is unique. Combining with the work of Donaldson \cite{donaldson1987twisted} and Corlette \cite{corlette1988flat}, we obtain the non-Abelian Hodge correspondence, which is a homeomorphism between the moduli space of polystable $\mathrm{SL}(n,\mathbb{C})$-Higgs bundles and the character variety of reductive representations from the fundamental group of $X$ to $\mathrm{SL}(n,\mathbb{C})$. For punctured Riemann surfaces, the theory of harmonic metrics was initiated by Simpson \cite{simpson1990harmonic}, who treated the tame case. The wild case was later developed by Biquard–Boalch in \cite{biquard2004wild}, and Mochizuki \cite{mochizuki2010wild, Mochizuki2021good} extended the theory to smooth quasi-projective varieties.

For a real reductive Lie group $G$, we also have the notion of $G$-Higgs bundles (see Section~\ref{Preliminaries on SO Higgs bundles}). An important class of $G$-Higgs bundles was constructed by Hitchin in \cite{hitchin1992lie} for a split real form $G$ of a complex simple Lie group $G^{\mathbb{C}}$. For such a $G$, Hitchin used the canonical line bundle $K$ and a tuple of holomorphic differentials $\symbfit{q}=(q_1,\dots,q_r)$, where $r$ is the rank of $G$, to define a $G$-Higgs bundle $\qty(\mathbb{K}_{G},\theta\qty(\symbfit{q}))$. When $X$ is compact and hyperbolic, this bundle satisfies the following fundamental properties:
\begin{itemize}
\item The $G$-Higgs bundle $\qty(\mathbb{K}_{G},\theta\qty(\symbfit{q}))$ is stable in some sense and admits a unique harmonic metric $h$ which is compatible with some extra structures,
\item the holonomy group of the flat connection $\mathbb{D}_{h}=\nabla_h + \theta + \theta^{*h}$ is in $G$, i.e., the flat connection $\mathbb{D}_{h}$ corresponds to a reductive representation $\rho$ from the fundamental group of $X$ to $G$.
\end{itemize}

Two key concepts associated with this class of Higgs bundles are the Hitchin fibration and the Hitchin section introduced by Hitchin \cite{hitchin1992lie}.
Let $r=\mathrm{rank}(G)$ and let $p_1,\dots, p_r$ be a basis of the ring of $G^{\mathbb{C}}$-invariant homogeneous polynomials. Their degrees are $d_1,\dots,d_r$ respectively, which depend only on $G$. Denote by $M(G^{\mathbb{C}})$ the moduli space of $G^{\mathbb{C}}$-Higgs bundles. The Hitchin fibration is the map
\begin{align*}
    h \colon M(G^{\mathbb{C}}) \rightarrow B(G^{\mathbb{C}}) \quad  (E, \theta) \to (p_1(\theta),\dots, p_r(\theta)), 
\end{align*}
where $B(G^{\mathbb{C}})=\bigoplus_{i=1}^r H^0\qty(X,K^{d_i})$ is called the Hitchin base.
The Hitchin section of this fibration is defined by 
\begin{align*}
    s \colon B(G^{\mathbb{C}}) \rightarrow  M(G^{\mathbb{C}}) \quad  \symbfit{q}=(q_1,\dots,q_r) \to  \qty(\mathbb{K}_{G},\theta\qty(\symbfit{q})),
\end{align*}
 where we regard a $G$-Higgs bundle naturally as a $G^{\mathbb{C}}$-Higgs bundle (see Remark~\ref{remark about G Higgs bundles}). 
Consequently, the Higgs bundle $(\mathbb{K}_{G},\theta\qty(\symbfit{q}))$ is called the $G$-Higgs bundle in the Hitchin section. 
We now recall the notion of Hitchin representations. Let $X$ be a compact hyperbolic Riemann surface. Let $\iota$ be the unique (up to conjugation) irreducible representation from $\mathrm{PSL}(2,\mathbb{R})$ to $G$. Recall that a representation $\rho \colon \pi_1(X) \to \mathrm{PSL}(2,\mathbb{R})$ is called Fuchsian if it is discrete and faithful. We also refer to the composition $\iota \circ \rho $ as a Fuchsian representation into $G$. Hitchin representations are defined as the continuous deformations of such Fuchsian representations. The set of conjugacy classes of Hitchin representations forms a connected component of the character variety known as the Hitchin component. Under the non-Abelian Hodge correspondence, $G$-Higgs bundles in the Hitchin section correspond to Hitchin representations.  

Now, we give an explicit construction of $G$-Higgs bundles in the Hitchin section for $G={\mathrm{SL}\qty(n,\mathbb{R})}$. Let $\symbfit{q} = \qty(q_2,\dots,q_n)$, where $q_j$ is a holomorphic $j$-differential on $X$. Recall that $K$ is the canonical line bundle of $X$. Choose a square root $K^{\frac{1}{2}}$ of $K$. The multiplication of $q_j$ induces the following holomorphic bundle maps:
$
K^{\nicefrac{\qty(n-2i+1)}{2}} \rightarrow K^{\nicefrac{\qty(n-2i+2(j-1)+1)}{2}} \otimes K  \, (j \leq i \leq n)
$.
We also have the identity map for $1 \leq i \leq n-1$:
$
K^{\nicefrac{\qty(n-2i+1)}{2}} \rightarrow K^{\nicefrac{\qty(n-2(i+1)+1)}{2}} \otimes K
$.
They define a Higgs field $\theta\qty(\symbfit{q})$ of $\mathbb{K}_{\mathrm{SL}\qty(n,\mathbb{R})} =  \mathop{\bigoplus} \limits_{i=1}^{n} K^{\nicefrac{\qty(n-2i+1)}{2}}$ by 
\begin{align} \label{Higgs field for sl}
\theta\qty(\symbfit{q})=
\begin{pNiceMatrix}
    0 & q_2 & \Cdots & q_n \\
    1 & \Ddots &\Ddots &\Vdots   \\
      & \Ddots & &q_2  \\
      & & 1&  0
\end{pNiceMatrix}.
\end{align}
The natural pairings
$
K^{\nicefrac{\qty(n+1-2i)}{2}} \otimes  K^{-\nicefrac{\qty(n+1-2i)}{2}} \rightarrow \mathcal{O}_{X} \, (1 \leq i \leq n)
$
induce a non-degenerate symmetric bilinear form $C_{\mathbb{K}_{\mathrm{SL}\qty(n,\mathbb{R})}}$ of $\mathbb{K}_{\mathrm{SL}\qty(n,\mathbb{R})}$. 

In particular, when $X$ is compact and $n=2$, these Higgs bundles parametrize
the Teichm\"uller space (see \cite{hitchin1987self}). The study of  harmonic metrics of $\qty(\mathbb{K}_{\mathrm{SL}\qty(n,\mathbb{R})},\theta\qty(\symbfit{q}))$ in the non-compact hyperbolic Riemann surfaces case is interesting. For the case $X= \overline{X}-D$ where $\overline{X}$ is a compact Riemann surface and $D$ is a finite set of points in $X$, let $q_j \, (j=2,\dots,n)$ be meromorphic differentials on $\overline{X}$ with possible poles at $D$ of pole order at most $j-1$. When $X$ is hyperbolic, using the work of Simpson \cite{simpson1990harmonic} on parabolic Higgs bundles and tame harmonic bundles, Biswas-Ar{\'e}s-Gastesi-Govindarajan \cite{biswas1997parabolic} showed that $\qty(\mathbb{K}_{\mathrm{SL}\qty(n,\mathbb{R})},\theta\qty(\symbfit{q}))$ can be regarded as a stable parabolic Higgs bundle of parabolic degree $0$ on $X$ with some chosen weights and thus admits a unique tame harmonic metric.
Recently, Li and Mochizuki studied the harmonic metrics of $\qty(\mathbb{K}_{\mathrm{SL}\qty(n,\mathbb{R})},\theta\qty(\symbfit{q}))$ for any non-compact hyperbolic Riemann surface in \cite{li2023higgs}. They obtained the following important theorem. 
\begin{theorem}\cite[Theorem~6.6]{li2023higgs}\label{existence and uniqueness theorem for sl}
Let $X$ be a non-compact hyperbolic Riemann surface and $g_X$ be the unique complete K\"ahler metric with Gau{\ss} curvature $-1$. Then there exists a harmonic metric $h$ of $(\mathbb{K}_{\mathrm{SL}\qty(n,\mathbb{R})}, \theta (\symbfit{q}))$ for any $\symbfit{q}$ such that 
\begin{itemize}
    \item $h$ is compatible with $C_{\mathbb{K}_{\mathrm{SL}\qty(n,\mathbb{R})}}$,
    \item $h$ weakly dominates $h_X$, where $h_X$ is the natural diagonal harmonic metric of $(\mathbb{K}_{\mathrm{SL}\qty(n,\mathbb{R})}, \theta (\symbf{0}))$ induced by $g_X$.
\end{itemize}
Moreover, when $\symbfit{q}$ are all bounded with respect to $g_X$, the harmonic metric which satisfies the above two conditions is unique.
\end{theorem}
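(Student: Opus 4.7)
The natural approach is the exhaustion method developed by Donaldson on surfaces with boundary and refined in the non-compact setting by Simpson and by Li--Mochizuki.

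First I would fix a smooth exhaustion $X_{1}\Subset X_{2}\Subset \cdots$ of $X$ by relatively compact open subdomains with smooth boundary. On each $X_{i}$ I would solve the Dirichlet problem for Hitchin's equation for $\qty(\mathbb{K}_{\mathrm{SL}(n,\mathbb{R})}, \theta(\boldsymbol{q}))$ with boundary data $h_{i}|_{\partial X_{i}}=h_{X}|_{\partial X_{i}}$. Existence of the solution $h_{i}$, compatible with $C_{\mathbb{K}_{\mathrm{SL}(n,\mathbb{R})}}$, follows from Donaldson's heat flow on compact surfaces with boundary; compatibility with the bilinear form is preserved along the flow because the initial metric $h_{X}$ is itself compatible.

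The second step is the comparison $h_{i}\geq h_{X}$ on $X_{i}$. Writing $h_{i}=h_{X}\,e^{s_{i}}$ with $s_{i}$ self-adjoint relative to $h_{X}$, one subtracts the Hitchin equation for $\qty(h_{X},\theta(\boldsymbol{0}))$ from that for $\qty(h_{i},\theta(\boldsymbol{q}))$. Together with the Dirichlet condition $s_{i}|_{\partial X_{i}}=0$ and the specific diagonal structure of the Hitchin-section Higgs field, the resulting elliptic inequality yields $s_{i}\geq 0$ by the maximum principle. A parallel comparison between $h_{i}$ and $h_{i+1}|_{X_{i}}$ gives the monotonicity of the sequence; standard local interior estimates of Uhlenbeck type then produce a smooth limit $h=\lim h_{i}$ on $X$ that is harmonic, compatible with $C_{\mathbb{K}_{\mathrm{SL}(n,\mathbb{R})}}$, and weakly dominates $h_{X}$.

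For uniqueness under the boundedness hypothesis, suppose $h^{(1)}$ and $h^{(2)}$ both satisfy the two conclusions, and set $s=\log\qty((h^{(1)})^{-1}h^{(2)})$. A Bochner-type identity of Simpson yields subharmonicity in the form
\begin{align*}
\Delta_{g_{X}} \operatorname{tr}\qty(e^{s}+e^{-s}-2)\;\geq\;0.
\end{align*}
The boundedness of each $q_{j}$ with respect to $g_{X}$, together with the common dominance $h^{(k)}\geq h_{X}$, furnishes a uniform $L^{\infty}$ bound on $s$. Since $(X,g_{X})$ is complete of constant curvature $-1$, the Omori--Yau maximum principle applies to this bounded subharmonic function, forcing it to be constant and, from its behaviour at the ends, identically zero.

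The main obstacle is the comparison step: showing that $h_{X}$ is a \emph{subsolution} of Hitchin's equation for $\theta(\boldsymbol{q})$ in an endomorphism-valued sense strong enough to drive the maximum principle. This reduces to a structural computation on the Hitchin section proving that the off-diagonal contributions of the $q_{j}$ only enlarge $\comm{\theta}{\theta^{*h_{X}}}$ in the Loewner order. A secondary technical point is passing to the limit while preserving the $C_{\mathbb{K}_{\mathrm{SL}(n,\mathbb{R})}}$-compatibility, and verifying that $g_{X}$-boundedness of $\boldsymbol{q}$ indeed produces the $L^{\infty}$ control on $s$ needed to invoke Omori--Yau.
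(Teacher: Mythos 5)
This theorem is quoted in the paper from Li--Mochizuki and not reproved there, but the paper carries out the same strategy in detail for the $\mathrm{SO}_0(n,n)$ analogue, so your proposal can be measured against that. Your skeleton (exhaustion, Dirichlet problem on each $X_i$, a comparison with $h_X$, a limiting argument, Omori--Yau for uniqueness) is the right one, but two of your key steps are wrong or missing. First, the comparison. You assert $h_i\geq h_X$ in the Loewner order and monotonicity of the sequence $h_i$. The actual property is \emph{weak domination}: $\det(h_i|_{F_k})\leq \det(h_X|_{F_k})$ along the filtration $F_k=\bigoplus_{j\le k}K^{(n-2j+1)/2}$ --- note the inequality goes the \emph{other} way on the first step ($h_i(e_1,e_1)\le h_X(e_1,e_1)$), so a Loewner-order bound $h_i\ge h_X$ is incompatible with it except in the trivial case. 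More importantly, this determinant inequality alone does not prevent the sequence from degenerating; the engine of the convergence is the interior a priori estimate $\abs{\theta}_{h_i}\le C$ for harmonic bundles (Proposition~\ref{bounds of theta}), which, combined with the lower bound on $\abs{P(h_i)_{1,1}}$ coming from weak domination and with the compatibility with $C_{\mathbb{K}_{\mathrm{SL}(n,\mathbb{R})}}$, yields two-sided $C^0$ bounds on $s_i$ via the linear-algebra estimate of Proposition~\ref{estimate P pro}. Your proposal contains no substitute for this mechanism, and your proposed monotonicity is neither established nor sufficient (a monotone decreasing sequence of metrics can still degenerate).

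Second, the uniqueness. Applying Omori--Yau to a bounded subharmonic function on a complete hyperbolic surface does \emph{not} force it to be constant (the hyperbolic plane carries many bounded non-constant subharmonic functions), and a general non-compact hyperbolic surface has no ``ends'' to appeal to. What Omori--Yau actually provides is a sequence $p_m$ along which $\tr(s)$ is almost maximal and $\Delta_{g_X}\tr(s)(p_m)\ge -1/m$; Simpson's identity then gives $\abs{\comm{s}{\theta}}^2_{h,g_X}(p_m)\le C/m$, and the decisive step --- entirely absent from your sketch --- is the linear-algebra theorem (the analogue of Theorem~\ref{estimate of s}) showing that near-vanishing of $\comm{s}{\theta}$, together with the quasi-cyclic structure of the Hitchin-section Higgs field and the real structure induced by the compatibility condition, forces $\abs{s-\mathrm{id}}\le C/\sqrt{m}$ at $p_m$. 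Only then does $\sup\tr(s)\le n$ follow and hence $s=\mathrm{id}$. As written, both the existence limit and the uniqueness conclusion have genuine gaps.
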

 The compatibility condition in this theorem will be elaborated in Section~\ref{Preliminaries on compatibility of symmetric pairings and Hermitian metrics}. Let us describe the metric $h_X$ and explain the weak domination property here. Define $F_k=\mathop{\bigoplus} \limits_{i=1}^{k} K^{\nicefrac{\qty(n-2i+1)}{2}}$ for $1 \leq k \leq n$. They induce a filtration of $\mathbb{K}_{\mathrm{SL}\qty(n,\mathbb{R})}$. Let $h_X=\oplus_{i=1}^{n}a_{i,\,n}g_X^{\frac{n-2i+1}{2}}$, where $a_{i,\,n}$ are some constants. We can choose these constants such that $h_X$ is a harmonic metric of $(\mathbb{K}_{\mathrm{SL}\qty(n,\mathbb{R})}, \theta (\symbf{0}))$ and compatible with $C_{\mathbb{K}_{\mathrm{SL}\qty(n,\mathbb{R})}}$. We call $h$ weakly dominates $h_X$ if $\det (h|_{F_k}) \leq \det (h_X|_{F_k})$ for $1 \leq k \leq n$. Note that the definition of weak domination property depends on the filtration.

 They also proved the above theorem for $G= \mathrm{SO}_0\qty(n,n-1)$ and $G=\mathrm{Sp}\qty(2n,\mathbb{R})$. Indeed, for $G=\mathrm{SO}_0(n,n-1)$ or $\mathrm{SP}(2n,\mathbb{R})$, the $G$-Higgs bundles in the Hitchin section can be naturally identified with $\mathrm{SL}(m,\mathbb{R})$-Higgs bundles in the Hitchin section (e.g. one may refer \cite[Section 6]{li2023higgs}).
 In this paper, we study the remaining classical case, namely, $G=\mathrm{SO}_0(n,n)$. For the definition of $\mathrm{SO}_0(n,n)$-Higgs bundles and the precise construction of $(\mathbb{K}_{\mathrm{SO}_0(n,n)}, \theta (\symbfit{q}))$, we refer to Section~\ref{Preliminaries on SO Higgs bundles}. We note here that $\symbfit{q} = (q_{1},\cdots, q_{n-1}, q_{n}) \in \mathop{\bigoplus} \limits_{i=1}^{n-1} H^{0}(X,K^{2i}) \bigoplus H^{0}(X,K^{n}) $. Recall
 the Hitchin base for $G= \mathrm{SO}_0\qty(n,n-1)$ is 
 $\mathop{\bigoplus} \limits_{i=1}^{n-1} H^{0}(X,K^{2i})$ (see Section~\ref{Preliminaries on SO Higgs bundles}). The extra factor $H^{0}(X,K^{n})$ appearing in the Hitchin base for $\mathrm{SO}_0(n,n)$ corresponds to the Pfaffian polynomial of $\mathfrak{so}(2n,\mathbb{C})$, which is a distinguished invariant polynomial that exists only for $\mathfrak{so}(2n,\mathbb{C})$ and not for other classical complex simple Lie algebras.

 The proof of Theorem~\ref{existence and uniqueness theorem for sl} relies on the particular form of the Higgs field. More precisely, the elements on the sub-diagonal of the Higgs field must not be zero and the elements below the sub-diagonal of the Higgs field must be zero, just like~\eqref{Higgs field for sl}. Li and Mochizuki needed this particular form to obtain the weak domination property and $C^0$ estimates for harmonic metrics and eventually proved the existence of the harmonic metric. Combining with the compatibility condition, they proved the uniqueness of the harmonic metric in the case of bounded differentials. However, the Higgs fields in our case are not in this particular form (see~\eqref{natural Higgs field}). Fortunately, we can use the
 compatibility condition to overcome the difficulties and obtain the above theorem in this case. 
 \begin{theorem}[Theorem~\ref{existence and uniqueness of harmonic metric}] \label{existence and uniqueness of harmonic metric in introduction}
Let $X$ be a non-compact hyperbolic Riemann surface and $g_X$ be the unique complete K\"ahler metric with Gau{\ss} curvature $-1$. Then there exists a harmonic metric $h$ of $(\mathbb{K}_{\mathrm{SO}_0(n,n)}, \theta (\symbfit{q}))$ for any $\symbfit{q}$ such that 
\begin{itemize}
     \item $h$ is compatible with the $\mathrm{SO}_0(n,n)$-structure,
    \item $h$ weakly dominates $h_X$, where $h_X$ is the natural diagonal harmonic metric of $(\mathbb{K}_{\mathrm{SO}_0(n,n)}, \theta (\symbf{0}))$ induced by $g_X$.
\end{itemize}
Moreover, when $\symbfit{q}$ are all bounded with respect to $g_X$, the harmonic metric which satisfies the above two conditions is unique.
\end{theorem}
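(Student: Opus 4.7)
My plan is to adapt the Li--Mochizuki exhaustion-and-limit scheme to $\mathrm{SO}_{0}(n,n)$, replacing the structural use of the lower-triangular companion form of the Higgs field by the compatibility condition with the $\mathrm{SO}_{0}(n,n)$-structure. First I would fix an exhaustion $X_{1}\Subset X_{2}\Subset\cdots$ of $X$ by smoothly bounded relatively compact domains, and on each $X_{i}$ solve the Dirichlet problem for Hitchin's equation for $(\mathbb{K}_{\mathrm{SO}_{0}(n,n)},\theta(\boldsymbol{q}))$ with boundary condition $h_{i}|_{\partial X_{i}}=h_{X}|_{\partial X_{i}}$. Since the boundary datum is compatible with the $\mathrm{SO}_{0}(n,n)$-structure, averaging the solution by the Cartan involution induced by $C_{\mathbb{K}_{\mathrm{SO}_{0}(n,n)}}$ preserves both the harmonic equation and the boundary values, so the solution on $X_{i}$ can be taken compatible with the $\mathrm{SO}_{0}(n,n)$-structure; uniqueness of the Dirichlet problem makes this compatibility automatic.

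The main analytic task is to prove, uniformly in $i$, the weak-domination inequalities $\det(h_{i}|_{F_{k}})\leq \det(h_{X}|_{F_{k}})$ for each filtration step. Writing $u_{k}:=\log\det(h_{i}|_{F_{k}})-\log\det(h_{X}|_{F_{k}})$ and subtracting the Hitchin equation for $h_{X}$ (with $\boldsymbol{q}=\boldsymbol{0}$) from that for $h_{i}$, one gets a differential inequality of the shape $\Delta u_{k}\geq f_{k}$, where $f_{k}$ is built from those entries of $\theta(\boldsymbol{q})$ not tangent to $F_{k}$. In the $\mathrm{SL}(n,\mathbb{R})$ setting every such entry lies strictly above the sub-diagonal, so $f_{k}\geq 0$ was immediate. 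In our case $\theta(\boldsymbol{q})$ has entries both above and below the obvious sub-diagonal, but the $\mathrm{SO}_{0}(n,n)$-compatibility forces those entries to be paired by the symmetric form $C_{\mathbb{K}_{\mathrm{SO}_{0}(n,n)}}$, so when one computes $f_{k}$ the paired contributions combine into a manifestly non-negative expression. The maximum principle on $X_{i}$ together with $u_{k}|_{\partial X_{i}}=0$ then gives $u_{k}\leq 0$, and the compatibility of $h_{i}$ upgrades the determinantal bounds to a full $C^{0}$-bound on $s_{i}:=h_{X}^{-1}h_{i}$.

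With uniform $C^{0}$-bounds in place, standard elliptic regularity provides local $C^{k}$-bounds for $h_{i}$ independent of $i$, and a subsequence converges on compacta to a harmonic metric $h$ on $X$ which inherits compatibility and weak domination. For uniqueness under the assumption that each $q_{j}$ is $g_{X}$-bounded, I would take two such metrics $h,h'$, set $s:=h^{-1}h'$, and use the double compatibility to show that $\varphi:=\log\operatorname{tr}(s)+\log\operatorname{tr}(s^{-1})$ is a non-negative subharmonic function on $(X,g_{X})$; the weak-domination bound combined with the boundedness of $\boldsymbol{q}$ then makes $\varphi$ bounded on the complete surface $(X,g_{X})$, so Yau's generalized maximum principle forces $\varphi$ to be constant, and hence $s=\mathrm{Id}$. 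The principal obstacle is the weak-domination/$C^{0}$ step of the second paragraph: without the triangular shape of $\theta(\boldsymbol{q})$, extracting the correct sign of the curvature term in the maximum-principle argument requires using the $\mathrm{SO}_{0}(n,n)$-compatibility in an essential way to pair up the spurious entries.
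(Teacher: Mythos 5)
Your overall architecture (Dirichlet problems on an exhaustion, involution-averaging to get compatibility, weak domination by a maximum principle, limit, then a maximum-principle uniqueness argument) matches the paper's, and the first paragraph is essentially Lemma~\ref{h is compatible with so}. But there are two genuine gaps where the proposal skips precisely the hardest parts of the argument.

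First, the passage from weak domination to a uniform $C^{0}$ bound on $s_{i}=h_{X}^{-1}h_{i}$ is not a formal ``upgrade via compatibility.'' Weak domination only controls the determinants $\det(h_{i}|_{F_{k}})$, i.e.\ the products of the diagonal Gram--Schmidt coefficients; it gives no bound on the off-diagonal part of $h_{i}$ relative to $h_{X}$, and compatibility with the quadratic form only yields relations such as $|P_{k,k}|=|P^{-1}_{2n+1-k,2n+1-k}|$, not bounds. The paper's actual mechanism is different: it first invokes Mochizuki's interior a priori estimate $|\theta(\boldsymbol{q})|_{h_{i},g}\leq C$ on compacta (Proposition~\ref{bounds of theta}), which bounds $|P^{-1}AP|$, and then runs the long inductive entry-by-entry estimate of Proposition~\ref{estimate P pro} using (i) the lower bound $|P_{1,1}|\geq d$ coming from weak domination, (ii) the identities forced by compatibility (Lemma~\ref{lem3}), and (iii) the specific pattern of zero and nonzero entries of $A$ (the sub-diagonal vanishes at $(n+1,n)$ and $(n+2,n+1)$ but $A_{n+2,n}\neq 0$). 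Without this step there is no uniform bound on $s_{i}$ and the limit cannot be extracted. Relatedly, in the weak-domination step itself the extra entries $\gamma,\gamma'$ of the Higgs field below the sub-diagonal enter the curvature inequality with the \emph{wrong} sign ($-|\gamma|^{2}$), so they do not ``combine into a manifestly non-negative expression''; the paper shows instead that one of them vanishes identically according to the parity of $n$, and in the odd case also needs the identity $\det(h|_{F_{n}})=\det(h|_{F_{n+1}})$, all consequences of compatibility proved in Section~\ref{Domination property}.

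Second, the uniqueness argument as written fails. A bounded non-negative subharmonic function on a complete hyperbolic surface need not be constant (the Poincar\'e disc carries many bounded non-constant subharmonic functions), so Yau's principle does not force $\varphi=\mathrm{const}$, and even constancy of $\operatorname{tr}(s)$ would only give $\bar{\partial}(s)=0$ and $[s,\theta]=0$, which does not imply $s=\mathrm{id}$ because $\theta(\boldsymbol{q})$ may have a nontrivial centralizer pointwise. What the Omori--Yau principle actually yields is a sequence of points $p_{m}$ with $|[s,\theta]|_{h_{1},g_{X}}(p_{m})\to 0$, and the essential missing ingredient is a quantitative statement that smallness of $[s,\theta]$ forces $s$ to be close to the identity. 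This is Theorem~\ref{estimate of s}, whose proof requires the quasi-cyclic vector analysis of Section~5.1: since the $\mathrm{SO}_{0}(n,n)$ Higgs field is not cyclic (its sub-diagonal has two zeros), one must use the real structure $\kappa$ induced by the compatible metric and the pairing, together with Propositions~\ref{U=k(U)=0} and~\ref{0}, to exclude the possibility that $s$ has an eigenvalue far from $1$ while almost commuting with $\theta$. Any correct uniqueness proof along your lines will need a substitute for this step.
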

\begin{remark}
   \begin{itemize}
    \item See Section~\ref{Set-up Domination property} for the detailed description of $h_X$ and the filtration used in the definition of weak domination property. 
    \item The weak domination property gives an estimate for the Higgs field, i.e.,
    $
        \abs{\theta\qty(\symbfit{q})}^2_{h,g_X} \geq \abs{\theta\qty(\symbf{0})}^2_{h,g_X} = \frac{n(n-1)(2n-1)}{3}.
    $ For the details, see Theorem~\ref{dominate Higgs field}.
    \item   In \cite{fujioka2024harmonic}, Hitoshi Fujioka studied the existence of harmonic metrics of $\mathrm{SL}\qty(3,\mathbb{R})$-Higgs bundles in the Hitchin section on flat non-compact Riemann surfaces, i.e., $\mathbb{C}$ or $\mathbb{C}^*$, using spectral curves. He discovered that a harmonic metric may not exist in some cases. We also desire to obtain similar results in the $\mathrm{SO}_0(n,n)$ case.
   \end{itemize}
\end{remark}

Finally, we present two applications of our results. The first one is a reproof of the existence and uniqueness of the harmonic metric of
 $(\mathbb{K}_{\mathrm{SO}_0(n,n)}, \theta (\symbfit{q}))$ on a compact hyperbolic Riemann surface and we can get the weak domination property of this harmonic metric.
 \begin{theorem} [Theorem~\ref{existence and uniqueness in compact case}]
     Let $X$ be a compact hyperbolic Riemann surface. Then for any $\symbfit{q}$, there uniquely exists a harmonic metric $h$ of $(\mathbb{K}_{\mathrm{SO}_0(n,n)}, \theta (\symbfit{q}))$ such that 
     \begin{itemize}
    \item $h$ is compatible with the $\mathrm{SO}_0(n,n)$-structure,
    \item $h$ weakly dominates $h_X$,where $h_X$ is the natural diagonal harmonic metric of $(\mathbb{K}_{\mathrm{SO}_0(n,n)}, \theta (\symbf{0}))$ induced by $g_X$.
\end{itemize}
 \end{theorem}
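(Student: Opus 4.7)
The plan is to reduce the statement to the non-compact Theorem~\ref{existence and uniqueness of harmonic metric in introduction} by deleting a single point. Fix $p \in X$ and set $X^{\circ} := X \setminus \{p\}$, which is non-compact and hyperbolic; since $\boldsymbol{q}$ consists of holomorphic differentials on the compact surface $X$, their pointwise norms with respect to the complete hyperbolic metric $g_{X^{\circ}}$ (which develops a cusp at $p$) are bounded, so Theorem~\ref{existence and uniqueness of harmonic metric in introduction} applies on $X^{\circ}$ and yields a harmonic metric $h^{\circ}$ of $(\mathbb{K}_{\mathrm{SO}_0(n,n)},\theta(\boldsymbol{q}))$ that is compatible with the $\mathrm{SO}_0(n,n)$-structure, weakly dominates $h_{X^{\circ}}$, and is unique under these two constraints. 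I would then show that $h^{\circ}$ extends smoothly across $p$: the underlying Higgs bundle extends holomorphically to $X$; the $\mathrm{SO}_0(n,n)$-compatibility of $h^{\circ}$ couples $h^{\circ}$ with $(h^{\circ})^{-1}$ through the non-degenerate symmetric pairing, forcing $h^{\circ}$ to be tame at $p$ with trivial parabolic filtration; and standard removable-singularity results for tame harmonic bundles then produce a smooth extension $h$ to $X$. By Hitchin--Simpson uniqueness for stable $G$-Higgs bundles on compact Riemann surfaces, $h$ is the unique harmonic metric on $X$ compatible with the $\mathrm{SO}_0(n,n)$-structure.

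The remaining step is the weak domination $\det(h|_{F_k}) \le \det(h_X|_{F_k})$ against the smooth reference $h_X$ built from the compact hyperbolic metric. The inequality obtained in the previous step is not enough: the Schwarz--Pick lemma gives $g_{X^{\circ}} \ge g_X|_{X^{\circ}}$, whence $\det(h_{X^{\circ}}|_{F_k}) \ge \det(h_X|_{F_k})$ on $X^{\circ}$ for $1 \le k \le n-1$, so the bound $\det(h|_{F_k}) \le \det(h_{X^{\circ}}|_{F_k})$ runs in the wrong direction. To remedy this, I would rerun the exhaustion-plus-Dirichlet-problem argument that drives the proof of Theorem~\ref{existence and uniqueness of harmonic metric in introduction}, but with the smooth reference $h_X$ replacing the cusped $h_{X^{\circ}}$: exhaust $X$ by the compact manifolds with boundary $X \setminus D_j$ for a nested sequence of disks $D_j$ shrinking to $p$, solve Hitchin's equation on each $X \setminus D_j$ with boundary data $h_X|_{\partial D_j}$ to obtain $h_j$, and apply the maximum principle to $u_k := \log \det(h_j|_{F_k}) - \log \det(h_X|_{F_k})$, which vanishes on $\partial D_j$. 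Letting $j \to \infty$ and identifying the limit with $h$ by the uniqueness already established yields the desired inequality on $X$.

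The main obstacle is to establish the pointwise subharmonicity $\Delta u_k \ge 0$ on $\{u_k > 0\}$. Unlike the $\mathrm{SL}(n,\mathbb{R})$ setting of \cite{li2023higgs}, where the Higgs field \eqref{Higgs field for sl} is upper-triangular with $1$'s on the sub-diagonal so that the required inequality reduces to a direct comparison of sub-diagonal entries, the $\mathrm{SO}_0(n,n)$ Higgs field acquires additional non-zero entries below the sub-diagonal, and the naive trace computation produces extra terms of indefinite sign. As in the author's proof of the non-compact theorem, one must exploit the $\mathrm{SO}_0(n,n)$-compatibility of $h$ and $h_X$, which rigidly pairs each $F_k$ with its symmetric partner under the non-degenerate form, in order to cancel these extra contributions and recover the subharmonicity that drives the maximum principle.
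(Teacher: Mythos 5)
Your reduction punctures $X$ at a point rather than passing to the universal cover, and the step on which everything hinges --- that the metric $h^{\circ}$ produced by the non-compact theorem on $X^{\circ}=X\setminus\{p\}$ extends smoothly across $p$ --- is false. The non-compact theorem produces the metric adapted to the \emph{cusped} complete hyperbolic metric $g_{X^{\circ}}$: $h^{\circ}$ weakly dominates $h_{X^{\circ}}$, and since $\boldsymbol{q}$ is bounded for $g_{X^{\circ}}$ the argument of Theorem~\ref{uniqueness in bounded differential case} makes $h^{\circ}$ mutually bounded with $h_{X^{\circ}}$. But $h_{X^{\circ}}$ itself degenerates at the cusp: in a coordinate $w$ centred at $p$ one has $g_{X^{\circ}}\sim (|w|\log|w|^{-1})^{-2}$, so on the summand $K^{n-k}$ the reference metric behaves like $(|w|\log|w|^{-1})^{2(n-k)}$, vanishing or blowing up for $k\neq n,n+1$. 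A metric mutually bounded with this cannot extend to a smooth metric on $\mathbb{K}_{\mathrm{SO}_0(n,n)}$ over $X$; the parabolic structure at $p$ is genuinely nontrivial (this is precisely the Biswas--Ar\'es-Gastesi--Govindarajan picture recalled in the introduction), and compatibility with the orthogonal pairings only forces the weights to be symmetric about $0$, not to vanish. Equivalently, the restriction to $X^{\circ}$ of the compact-surface harmonic metric does \emph{not} weakly dominate $h_{X^{\circ}}$ near $p$, so your first step constructs the wrong object. Incidentally your Schwarz--Pick comparison has the wrong sign: $g_{X^{\circ}}\geq g_X$ gives, on $K^{N_k}=\det F_k$ with $N_k=\sum_{i=1}^{k}(n-i)>0$, the inequality $\det(h_{X^{\circ}}|_{F_k})\leq\det(h_X|_{F_k})$ for $1\leq k\leq n$ --- the \emph{favourable} direction --- which only underlines that the real obstruction is the failure of the extension, not the comparison of references.

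Your proposed remedy (exhaust by $X\setminus D_j$ with boundary data $h_X$) is in effect a new existence proof from scratch; it still leaves unaddressed the extension of the limit across $p$ and its identification with a metric on $X$, and the uniqueness you invoke from Hitchin--Simpson requires a stability verification that the paper deliberately avoids. The paper's own proof sidesteps all of this by choosing the universal cover $\mathbb{D}$ as the non-compact surface: there the complete hyperbolic metric is exactly the lift of $g_X$ (no Schwarz--Pick loss and no cusp), the lifted differentials are bounded, Theorem~\ref{existence and uniqueness of harmonic metric} applies, and its uniqueness clause forces the resulting metric to be $\Gamma$-invariant, so it descends to $X$ with both properties intact; uniqueness on $X$ then follows again from Theorem~\ref{uniqueness in bounded differential case}. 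If you want to keep a reduction to the non-compact case, the cover, not the puncture, is the right choice.
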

 The second application is a rigidity result concerning the energy densities of Hitchin representations for  $\mathrm{PSO}_{0}\qty(n,n)$.
 Consider a closed surface $S$ with genus $g \geq 2$. A hyperbolic metric $g_0$ gives a representation from $\pi_1(S)$ to $\mathrm{PSL}(2,\mathbb{R})$. Composing with the unique irreducible representation of $\mathrm{PSL}(2,\mathbb{R})$ to $\mathrm{PSO}_0(n,n)$, we get the base $n$-Fuchsian representation $\rho_0$. The Hitchin representations are exactly the deformations of $\rho_0$. All Hitchin representations can be lifted to the representations to $\mathrm{SO}_0(n,n)$. The lifted representations are also called by Hitchin representations. 
 The conformal class of $g_0$ gives a Riemann surface structure on $S$, which is denoted by $X$. Then the $\mathrm{SO}_0(n,n)$-Higgs bundles $(\mathbb{K}_{\mathrm{SO}_0(n,n)}, \theta (\symbfit{q}))$ in the Hitchin section are one-one corresponding to the Hitchin representations $\rho$ via the non-Abelian Hodge correspondence. And the base $n$-Fuchsian representation $\rho_0$ corresponds to the Higgs bundle $(\mathbb{K}_{\mathrm{SO}_0(n,n)}, \theta (\symbf{0}))$. Note that a harmonic metric $h$ of $(\mathbb{K}_{\mathrm{SO}_0(n,n)}, \theta (\symbfit{q}))$ is equivalent to a equivariant harmonic map $f$ from the universal cover of $X$ to the symmetric space $\mathrm{SO}_{0}\qty(n,n)/\mathrm{SO}\qty(n)\times\mathrm{SO}\qty(n)$. Equip $\mathrm{SO}_{0}\qty(n,n)/\mathrm{SO}\qty(n)\times\mathrm{SO}\qty(n)$ with the Riemann metric induced by the Killing form. We have the following rigidity result about the energy density of $f$. This result is motivated by the work of Li \cite{li2019harmonic}. In fact, Li discovered the weak domination property of harmonic metrics for $\mathrm{SL(n,\mathbb{R})}$-Higgs bundles in the Hitchin section and used this property to prove the energy rigidity property. 
\begin{theorem} [Corollary~\ref{energy domination}]
    Suppose that $S$ is a closed orientable surface with genus $g\geq2$ and $g_0$ is a hyperbolic metric on $S$. Consider the universal cover $\widetilde{S}$ of $S$ and the lifted hyperbolic metric $\widetilde{g_0}$. Let $\rho$ be a Hitchin representation for $\mathrm{SO}_0(n,n)$ and $f$ be the unique $\rho$-equivariant harmonic map from $\qty(\widetilde{S},\widetilde{g_0})$ to $\mathrm{SO}_{0}\qty(n,n)/\mathrm{SO}\qty(n)\times\mathrm{SO}\qty(n)$. Then its energy density $e(f)$ satisfies 
    \begin{align*}
        e(f) \geq \frac{2n(n-1)^2(2n-1)}{3}.
    \end{align*}
    Moreover, the equality holds at a point if and only if $\rho$ is the base $n$-Fuchsian representation of $\qty(S,g_0)$.
\end{theorem}
\subsection*{Organization}
 In Section~\ref{Preliminaries}, we first recall the definition of $\mathrm{SO}_0(n,n)$-Higgs bundles and the construction of  $(\mathbb{K}_{\mathrm{SO}_0(n,n)}, \theta (\symbfit{q}))$. Then we present some results on the existence of harmonic metrics using smooth exhaustion family of harmonic metrics in \cite{li2023higgs} and the compatibility condition in \cite{Li2023generically}.
 In Section~\ref{Domination property}, we prove the key weak domination property. We hugely use the compatibility condition. In Section~\ref{Existence of harmonic metric}, we prove the existence part of Theorem~\ref{existence and uniqueness of harmonic metric in introduction}. The key point is Proposition~\ref{estimate P pro}. In Section~\ref{Uniqueness in bounded case}, we prove the uniqueness part of Theorem~\ref{existence and uniqueness of harmonic metric in introduction}. In Section~\ref{Applications}, we reprove the existence and uniqueness theorem of the harmonic metric in compact hyperbolic Riemann surface case which does not use the stability condition and gives the energy density domination. Using the Maximum principle, we obtain the rigidity result.
 \subsection*{Acknowledgment}
  I wish to express my sincere gratitude to my Ph.D. supervisor, Prof. Qiongling Li, for suggesting this problem and for many stimulating discussions. I am also grateful to Junming Zhang for helpful comments. I thank the reviewer for their careful reading and insightful suggestions, which have greatly improved the quality of this paper. This work was partially supported by the National Key R\&D Program of China No.2022YFA1006600 and the Fundamental Research Funds for the Central Universities (No.
63243067) and Nankai Zhide Foundation. 

\section{Preliminaries}\label{Preliminaries}
\subsection{Preliminaries on \texorpdfstring{$\mathrm{SO}_0(n,n)$}--Higgs bundles}
\label{Preliminaries on SO Higgs bundles}

Let $X$ be a Riemann surface and $K$ be its canonical line bundle.

\begin{definition} \label{SL Higgs bundles definition}
    An $\mathrm{SL}(n,\mathbb{C})$-Higgs bundle over $X$ is a pair $(E,\theta)$, where $E$ is a rank $n$ holomorphic vector bundle of trivial determinant and $\theta$ is a trace-free holomorphic endomorphism of $E$ twisted by $K$. 
\end{definition}
Let $G$ be a real reductive Lie group with Lie algebra $\mathfrak{g}$. We briefly introduce the notion of a $G$-Higgs bundle. See \cite{garciaprada2012hitchinkobayashi} and \cite{biquard2020parabolic} for more details.
Let $H \subset G$ be a maximal compact subgroup with Lie algebra $\mathfrak{h}$. The corresponding Cartan decomposition is $\mathfrak{g}=\mathfrak{h} \oplus \mathfrak{m}$, where 
\begin{align*}
    \comm{\mathfrak{h}}{\mathfrak{h}} \subset \mathfrak{h}, \quad \comm{\mathfrak{h}}{\mathfrak{m}} \subset \mathfrak{m}, \quad \comm{\mathfrak{m}}{\mathfrak{m}} \subset \mathfrak{h}.
\end{align*}
Hence the compact group $H$ acts on $\mathfrak{m}$ by the adjoint representation. We obtain the isotropy representation $\iota \colon H^{\mathbb{C}} \to \mathrm{GL}(\mathfrak{m}^{\mathbb{C}})$ by complexification. For a holomorphic $H^{\mathbb{C}}$-principal bundle $P$, let $P(\mathfrak{m^{\mathbb{C}}})=P \times_{H^{\mathbb{C}}} \mathfrak{m}^{\mathbb{C}}$ be the associated $\mathfrak{m}^{\mathbb{C}}$-bundle via the isotropy representation $\iota$.
\begin{definition} \label{G higgs bundles definition}
    A $G$-Higgs bundle on $X$ is a pair $(P,\theta)$, where $P$ is a holomorphic $H^{\mathbb{C}}$-principal bundle and $\theta$ is a holomorphic section of $P(\mathfrak{m^{\mathbb{C}}}) \otimes K$.
\end{definition}
\begin{remark} \label{remark about G Higgs bundles}
    \begin{itemize}
        \item  When $G$ is complex, let $H$ be a maximal compact group. Then $H^{\mathbb{C}}=G$ and the Cartan decomposition of the Lie algebra of $G$ is $\mathfrak{g} = \mathfrak{h} \oplus i \mathfrak{h}$, where $\mathfrak{h}$ is the Lie algebra of $H$. Hence a $G$-Higgs bundle $(P,\theta)$ consists of a holomorphic $G$-principle bundle $P$ and a holomorphic section $\theta \in P(\mathfrak{g}) \otimes K$. In particular, Definition~\ref{G higgs bundles definition} for $G=\mathrm{SL}(n,\mathbb{C})$ coincides with Definition~\ref{SL Higgs bundles definition}.
        \item For a real form $G$ of a complex reductive Lie group $G^{\mathbb{C}}$, a $G$-Higgs bundle can be naturally regarded as a $G^{\mathbb{C}}$-Higgs bundle.
    \end{itemize}
\end{remark}

For $G=\mathrm{SO}_0(n_1,n_2)$, the maximal compact subgroup $H$ is $\mathrm{SO}(n_1) \times \mathrm{SO}(n_2)$, whose complexification is $H^{\mathbb{C}}=\mathrm{SO}(n_1,\mathbb{C}) \times \mathrm{SO}(n_2,\mathbb{C})$. The Lie algebra $\mathfrak{g}$ of $G$ is
\begin{align*}
    \mathfrak{g}=\qty{
        \begin{pmatrix*}
            A & B \\
            C & D \\
        \end{pmatrix*} \middle| 
        \begin{array}{l}
             A \in \mathbb{R}^{n_1 \times n_1},\, D \in \mathbb{R}^{n_2 \times n_2},\, B \in \mathbb{R}^{n_1 \times n_2},\,C \in \mathbb{R}^{n_2 \times n_1},  \\
        \trans{A} + A =0, \,\trans{D}+D= 0 ,\, C=\trans{B}
        \end{array}
    }.
\end{align*}
Here  $\mathbb{R}^{m \times n}$ denotes the set of $m \times n$ matrices with entries in $\mathbb{R}$.
The Cartan decomposition $\mathfrak{g}=\mathfrak{h} \oplus \mathfrak{m}$ is given by
\begin{align*}
    \mathfrak{h} & =\qty{
 \begin{pmatrix*}
            A &  \\
             & D \\
        \end{pmatrix*} \middle| 
        \begin{array}{l}
             A \in \mathbb{R}^{n_1 \times n_1},\, D \in \mathbb{R}^{n_2 \times n_2},  \\
        \trans{A} + A =0, \,\trans{D}+D= 0
        \end{array}
    }, \\
  \mathfrak{m} & = \qty{
   \begin{pmatrix*}
             & B \\
            \trans{B} &  \\
        \end{pmatrix*} \middle|  B \in \mathbb{R}^{n_1 \times n_2} 
  }.
\end{align*}
Therefore, an $\mathrm{SO}_0(n_1,n_2)$-Higgs bundles can be described concretely in terms of vector bundles as follows:
An $\mathrm{SO}_0(n_1,n_2)$-Higgs bundle over $X$ is equivalent to a triple $(E,Q_E,\theta)$ of the form
\begin{align*}
\qty(E,Q_E,\theta) = \qty(V \oplus W,\, Q_V \oplus -Q_W,\,
\begin{pmatrix}
0 & \eta \\
\eta^{\dagger} & 0
\end{pmatrix}
),
\end{align*}
 where 
 \begin{itemize}
    \item $V$ and $W$ are holomorphic vector bundles of rank $n_1$ and $n_2$, respectively, each with trivial determinant,
    \item $Q_V,Q_W$ are non-degenerate holomorphic symmetric pairings on $V$ and $W$ respectively,
    \item $\eta \colon W \to V \otimes K$ is a holomorphic bundle morphism,
    \item $\eta ^{\dagger}$ is the adjoint of $\eta$ with respect to $Q_{V},Q_{W}$.
 \end{itemize}
\begin{remark}
    \begin{itemize}
        \item Note that $(V,Q_V)$ and $(W,Q_W)$ are equivalent, respectively, to holomorphic principal $\mathrm{SO}(n_1,\mathbb{C})$- and $\mathrm{SO}(n_2,\mathbb{C})$-bundles. Moreover, $(E,Q_E)$ corresponds to a holomorphic principal $\mathrm{SO}(n_1,\mathbb{C})\times \mathrm{SO}(n_2,\mathbb{C})$-bundle.
        \item The Higgs field $\theta$ is anti-symmetric with respect to 
        $
        \left(
        \begin{smallmatrix}
            Q_{V} & \\
                & -Q_{W}
        \end{smallmatrix}
        \right)
        $ and symmetric with respect to 
        $
        \left(
        \begin{smallmatrix}
            Q_{V} & \\
                & Q_{W}
        \end{smallmatrix}
        \right).
        $
        \item We make no distinction between the $\mathrm{SO}_0(n_1,n_2)$-Higgs bundle $\qty(E,Q_E,\theta)$ and the triple $\qty(\qty(V,Q_{V}),\qty(W,Q_{W}),\eta)$. For simplicity, we usually omit $Q_E$ and abbreviate $\qty(E,Q_E,\theta)$ as $\qty(E,\theta)$.
    \end{itemize}
\end{remark}
Fix a positive integer $n$, choose $\symbfit{q} = (q_{1},\dots, q_{n-1}, q_{n}) \in \mathop{\bigoplus} \limits_{i=1}^{n-1} H^{0}(X,K^{2i}) \bigoplus H^{0}(X,K^{n}) $, i.e., $q_{i}$ is a holomorphic $2i$-differential for $1\leq i \leq n-1$ and $q_{n}$ is a holomorphic $n$-differential. One can use these data to construct the triple $\qty(\qty(V,Q_{V}),\qty(W,Q_{W}),\eta \qty(\symbfit{q}))$ :

\begin{align}\label{Higgs field}
\begin{aligned} 
(V,Q_{V})=&
\qty( K^{n-1} \oplus K^{n-3} \oplus \dots \oplus K^{3-n} \oplus K^{1-n},\,
\begin{pmatrix} 
&         & 1  \\
& \iddots &  \\
1 &       &    \\
\end{pmatrix} 
), \\
(W,Q_{W})=&
\qty( K^{n-2} \oplus K^{n-4} \oplus \dots \oplus K^{4-n}\oplus K^{2-n} \oplus \mathcal{O}',\,
\begin{pmatrix}
&         & 1  &  \\
& \iddots &     & \\
1 &       &      &  \\
&        &     & 1 \\
\end{pmatrix} 
),   \\ 
\eta (\symbfit{q})= &
\begin{pmatrix}
q_1 & q_2 & \dots & q_{n-1} & q_n \\
1  & q_1 & \dots & q_{n-2} & 0    \\
& \ddots & \ddots & \vdots  & \vdots  \\
&        & 1      &  q_1    & 0      \\
&  &              &  1      & 0
\end{pmatrix} \colon 
W \longrightarrow V \otimes K.
\end{aligned}
\end{align}
Here $\mathcal{O}'$ is the trivial line bundle over $X$. We use this symbol to emphasize the difference between two trivial line bundles which another trivial line bundle $\mathcal{O}$ may appear in $V$ or $W$ depending on the parity of $n$.
\begin{definition} \label{definition of Higgs bundles in the Hitchin section}
For the triple $\qty(\qty(V,Q_{V}),\qty(W,Q_{W}),\eta \qty(\symbfit{q}))$, 
the corresponding $\mathrm{SO}_0(n,n)$-Higgs bundle is denoted by $(\mathbb{K}_{\mathrm{SO}_0(n,n)}, \theta (\symbfit{q}))$. This Higgs bundle is referred to as the $\mathrm{SO}_0(n,n)$-Higgs bundle in the Hitchin section. 
\end{definition} 

Concretely, the vector bundle $\mathbb{K}_{\mathrm{SO}_0(n,n)}$ is a direct sum 
\[\mathbb{K}_{\mathrm{SO}_0(n,n)}=K^{n-1} \oplus K^{n-3} \oplus \dots \oplus K^{3-n} \oplus K^{1-n} \oplus  K^{n-2} \oplus K^{n-4} \oplus \dots \oplus K^{4-n}\oplus K^{2-n} \oplus \mathcal{O}'.
\] 
After a suitable rearrangement of summands, it can be written as
\[
K^{n-1}\oplus K^{n-2} \oplus \dots K \oplus \mathcal{O} \oplus K^{-1} \oplus \dots \oplus K^{1-n}\oplus \mathcal{O}'.
\]
The Higgs bundle $(\mathbb{K}_{\mathrm{SO}_0(n,n)}, \theta (\symbf{0}))$ admits a diagrammatic description:
\begin{align} 
    \qty(K^{n-1} \xrightarrow{1} K^{n-2} \xrightarrow{1} \dots \xrightarrow{1} \dots \xrightarrow{1}
K \xrightarrow{1} \mathcal{O} \xrightarrow{1} K^{-1} \xrightarrow{1} \dots \xrightarrow{1} K^{2-n} \xrightarrow{1} K^{1-n} ) \oplus \qty(\mathcal{O}',0).
\end{align}.
\begin{remark}
    \begin{itemize}
    \item  For the explicit construction of $\mathrm{SO}_0(n,n)$-Higgs bundles in the Hitchin section,  we refer \cite[Chapter~6.4]{Garcia-Prada_2009} and \cite[Chapter~8.4]{Arroyo2009TheGO}.
    \end{itemize}
\end{remark}

Next, we introduce $\mathrm{SO}_0(n,n-1)$-Higgs bundles in the Hitchin section and explain the connections and differences between these two types of Higgs bundles: $\mathrm{SO}_0(n,n)$- and $\mathrm{SO}_0(n,n-1)$-Higgs bundles in the Hitchin section.

For $\symbfit{q}' = (q_{1},\dots, q_{n-1}) \in \mathop{\bigoplus} \limits_{i=1}^{n-1} H^{0}(X,K^{2i})$, the corresponding $\mathrm{SO}_0(n,n-1)$-Higgs bundle in the Hitchin section $(\mathbb{K}_{\mathrm{SO}_0(n,n-1)}, \theta (\symbfit{q}'))$ is given by the triple:
 \begin{align} \label{SO(n,n-1) Higgs field}
\begin{aligned} 
(V,Q_{V})=&
\qty( K^{n-1} \oplus K^{n-3} \oplus \dots \oplus K^{3-n} \oplus K^{1-n},\,
\begin{pmatrix} 
&         & 1  \\
& \iddots &  \\
1 &       &    \\
\end{pmatrix} 
), \\
(W,Q_{W})=&
\qty( K^{n-2} \oplus K^{n-4} \oplus \dots \oplus K^{4-n}\oplus K^{2-n},\,
\begin{pmatrix}
&         & 1    \\
& \iddots &      \\
1 &       &        \\
\end{pmatrix} 
),   \\ 
\eta (\symbfit{q}')= &
\begin{pmatrix}
q_1 & q_2 & \dots & q_{n-1} \\
1  & q_1 & \dots & q_{n-2} \\
& \ddots & \ddots & \vdots \\
&        & 1      &  q_1   \\     
&  &              &  1    \\  
\end{pmatrix} \colon 
W \longrightarrow V \otimes K.
\end{aligned}
\end{align}
Note that $\mathbb{K}_{\mathrm{SO}_0(n,n)}= \mathbb{K}_{\mathrm{SO}_0(n,n-1)} \oplus \mathcal{O}'$. In a diagrammatic form, the Higgs bundle $(\mathbb{K}_{\mathrm{SO}_0(n,n-1)},\theta (\symbf{0}))$ is represented by
\begin{align} 
    K^{n-1} \xrightarrow{1} K^{n-2} \xrightarrow{1} \dots \xrightarrow{1} \dots \xrightarrow{1}
K \xrightarrow{1} \mathcal{O} \xrightarrow{1} K^{-1} \xrightarrow{1} \dots \xrightarrow{1} K^{2-n} \xrightarrow{1} K^{1-n}.
\end{align}
The bundle $\mathbb{K}_{\mathrm{SO}_0(n,n-1)}$ carries a natural holomorphic filtration $ \symbfit{F}=\lbrace 0 =F_0 \subset F_1 \subset \dots \subset F_{2n-1}=\mathbb{K}_{\mathrm{SO}_0(n,n-1)} \rbrace $, which is defined by 
\[
  F_{k} = \mathop{\bigoplus} \limits_{i=1}^k K^{n-i}, \quad  1 \leq k \leq 2n-1.
\]
Furthermore, it follows from \eqref{SO(n,n-1) Higgs field} that the Higgs field $\theta (\symbfit{q}')$ sends $F_k$ to $F_{k+1} \otimes K$ and induces the following isomorphism
\[
 \theta (\symbfit{q}') \colon F_{k}/F_{k-1} \xrightarrow{1} F_{k+1}/F_{k} \otimes K 
\]
for $1 \leq k \leq 2n-2$. In other words, the Higgs field induces an isomorphism on every graded quotient of this filtration.

For $G= \mathrm{SL}(n,\mathbb{R})$ or $\mathrm{SP}(2n,\mathbb{R})$, the corresponding $G$-Higgs bundle in the Hitchin section also admits a holomorphic filtration 
\[
\symbfit{F}=\lbrace 0 =F_0 \subset F_1 \subset \dots \subset F_{m}=\mathbb{K}_{G} \rbrace ,
\]
where $m=n$ for $G= \mathrm{SL}(n,\mathbb{R})$ and $m=2n$ for $G=\mathrm{SP}(2n,\mathbb{R})$. Moreover, the Higgs field yields isomorphisms $ F_{k}/F_{k-1} \cong F_{k+1}/F_{k} \otimes K$ for $1 \leq k \leq m-1$ (see \cite[Section 6]{li2023higgs}). 

This property is crucial for the proof of Theorem~\ref{existence and uniqueness theorem for sl} by Li and Mochizuki (see \cite[Theorem~6.6]{li2023higgs}), but it fails for $G=\mathrm{SO}_0(n,n)$. In fact, if we define the holomorphic filtration $\symbfit{F}=\lbrace 0 =F_0 \subset F_1 \subset \dots \subset F_{2n}=\mathbb{K}_{\mathrm{SO}_0(n,n)} \rbrace $ of $\mathbb{K}_{\mathrm{SO}_0(n,n)}$ as 
\begin{align*} 
F_k=
    \begin{cases}
        \mathop{\bigoplus} \limits_{i=1}^k K^{n-i},   &1\leq k \leq 2n-1,       \\
         F_{2n-1} \oplus \mathcal{O}',   & k=2n,    \end{cases}
\end{align*}
then the Higgs field $\theta (\symbfit{q})$ induces isomorphisms $F_{k}/F_{k-1} \cong F_{k+1}/F_{k} \otimes K$ for $1 \leq k \leq 2n-2$ but gives the \textbf{zero} morphism on the last graded quotient $F_{2n-1}/F_{2n-2} \to F_{2n}/F_{2n-1} \otimes K$ by \eqref{Higgs field}. 

To overcome this difficulty, we introduce a new filtration for $\mathbb{K}_{\mathrm{SO}_0(n,n)}$ (see \eqref{filtration}). Although the Higgs field $\theta (\symbfit{q})$ still does not induce isomorphisms for all graded quotients of this new filtration, we are able to establish the weak domination property (see Section~\ref{Domination property}) and $C^0$ estimates for harmonic metrics (see Section~\ref{Existence of harmonic metric}) by exploiting the compatibility condition (see Section~\ref{Preliminaries on compatibility of symmetric pairings and Hermitian metrics}), which ultimately allows us to prove the existence of harmonic metrics. The proof of uniqueness of a harmonic metric under the assumption that all holomorphic differentials are bounded also relies on the compatibility condition (see Section~\ref{Uniqueness in bounded case}).

\begin{remark}
    \begin{itemize}
        \item  From the perspective of representation theory, we may also observe a key distinction between the  $G=\mathrm{SO}_0(n,n)$-Higgs bundles in the Hitchin section and those for the other classical cases, namely $\mathrm{SL}(n,\mathbb{R}), \mathrm{SO}_0(n,n-1)$ and $\mathrm{SP}(2n,\mathbb{R})$. 
   
   Let $X$ be a compact hyperbolic Riemann surface. By the Non-Abelian Hodge correspondence, the $G$-Higgs bundles in the Hitchin section on $X$ correspond bijectively to $G$-Hitchin representations from $\pi_{1}(X)$ to $G$. There is a natural embedding $ \iota_G \colon G \hookrightarrow \mathrm{SL}(m,\mathbb{R})$, where $m=2n-1$ for $G=\mathrm{SO}_0(n,n-1)$ and $m=2n$ for $G=\mathrm{SO}_0(n,n),\mathrm{SP}(2n,\mathbb{R})$. 

   Let $\rho
   $ be a $G$-Hitchin representation. If $G=\mathrm{SO}_0(n,n-1)$ or $\mathrm{SP}(2n,\mathbb{R})$, then the composition $\iota_G \circ \rho$ is also an $\mathrm{SL}(m,\mathbb{R})$-Hitchin representation (e.g. one may refer \cite[Section 9]{PozzettiSambarinoWienhard+2021+1+51}). However, this fails for $G=\mathrm{SO}_0(n,n)$. We briefly explain why.
   
   Consider the natural inclusion $\iota_0 \colon \mathrm{SO}_0(n,n-1) \hookrightarrow \mathrm{SO}_0(n,n)$. Let $\rho_0$ be the $\mathrm{SO}_0(n,n-1)$-Fuchsian representation. Then $\rho_0'=\iota_0 \circ \rho_0$ is the corresponding $\mathrm{SO}_0(n,n)$-Fuchsian representation. However, the composition $\iota_{\mathrm{SO}_0(n,n)} \circ \iota_0 \colon \mathrm{SO}_0(n,n-1) \to \mathrm{SL}(2n,\mathbb{R})$ is reducible. Consequently, $\iota_{\mathrm{SO}_0(n,n)} \circ \rho_0'$ is not $\mathrm{SL}(2n,\mathbb{R})$-Hitchin, as it fails to be irreducible. Recall that $G$-Hitchin representations are precisely the continuous deformations of the $G$-Fuchsian representation. Hence, under the induced map of the character varieties 
   \begin{align*}
    \mathfrak{X}(\iota_{\mathrm{SO}_0(n,n)}) :  \mathfrak{X}(\pi_1(X),\mathrm{SO}_0(n,n) ) \to \mathfrak{X}(\pi_1(X),\mathrm{SL}(2n,\mathbb{R})), 
   \end{align*}
    the $\mathrm{SO}_0(n,n)$-Hitchin component is sent into a component in the character variety $\mathfrak{X}(\pi_1(X),\mathrm{SL}(2n,\mathbb{R}))$ that lies entirely outside of the $\mathrm{SL}(2n,\mathbb{R})$-Hitchin component.
   \item   Indeed, for $G=\mathrm{SO}_0(n,n-1)$ or $\mathrm{SP}(2n,\mathbb{R})$, the $G$-Higgs bundles in the Hitchin section can be naturally identified with $\mathrm{SL}(m,\mathbb{R})$-Higgs bundles in the Hitchin section (e.g. one may refer \cite[Section 6]{li2023higgs}). This identification, however, fails for $G=\mathrm{SO}_0(n,n)$ due to the presence of the extra line bundle $\mathcal{O}'$ and the $n$-holomorphic differential $q_n$.
    \end{itemize}
\end{remark}

\subsection{Preliminaries on existence of harmonic metrics}

This subsection collects several existence results for harmonic metrics, largely following \cite{li2023higgs}.

Let $X$ be a Riemann surface and $\qty(E,\theta)$ be a $\mathrm{SL}(n,\mathbb{C})$-Higgs bundle on $X$. Recall that a Hermitian metric $h$ on $E$ is called a harmonic metric of the Higgs bundle $\qty(E,\theta)$ if it satisfies the Hitchin equation:
\begin{align*}
F\qty(h) + \comm{\theta}{\theta^{*h}} = 0,
\end{align*}
in which $F\qty(h)$ is the Chern curvature of $h$ and $\theta^{*h}$ is the adjoint of $\theta$ with respect to $h$. 

Let $Y \subset X$ be a relatively compact connected open subset with non-empty smooth boundary $\partial Y$. A Hermitian metric $h$ on $E|_{Y}$ is  harmonic if it satisfies the Hitchin equation in the interior of $Y$. Fix an arbitrary Hermitian metric $h_{\partial}$ on $E|_{\partial Y}$. The following proposition originates from Donaldson \cite{donaldson1992boundary}. 

\begin{proposition} \label{boundary harmonic metric} \cite[Proposition 3.3]{li2023higgs}
There exists a unique harmonic metric $h$ of $(E,\theta)|_{Y}$ such that $h|_{\partial Y}=h_{\partial Y}$.
\end{proposition}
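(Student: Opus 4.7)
The plan is to solve this as a Dirichlet boundary value problem for Hitchin's equation via a variational approach, following the strategy pioneered by Donaldson on the disc and extended to general $Y$ in \cite{li2023higgs}. First I would extend $h_{\partial Y}$ to a smooth Hermitian reference metric $h_{0}$ on $E|_{\overline{Y}}$, which is possible since $\partial Y$ is a smooth compact hypersurface. Every Hermitian metric $h$ on $E|_{Y}$ can then be written as $h = h_{0} e^{s}$ where $s$ is an endomorphism self-adjoint with respect to $h_{0}$, and the boundary condition $h|_{\partial Y} = h_{\partial Y}$ becomes simply $s|_{\partial Y} = 0$. The Donaldson functional $M(h_{0}, h_{0}e^{s})$, defined in Simpson's and Donaldson's papers and adapted to the Higgs bundle setting by adding the $[\theta,\theta^{*h}]$ term, has the property that its first variation along a path $h_{t} = h_{0} e^{ts}$ equals $2 \int_{Y} \operatorname{tr}\bigl(s \cdot \Lambda(F(h) + [\theta,\theta^{*h}])\bigr) \, dV$ up to a positive constant. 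Hence critical points among metrics with $s|_{\partial Y} = 0$ are precisely the harmonic metrics solving our Dirichlet problem.

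For existence, I would minimize $M$ over the affine space of Hermitian metrics with boundary value $h_{\partial Y}$. The Donaldson functional is convex along geodesics $h_{t} = h_{0}e^{ts}$, and the Dirichlet boundary condition forces coercivity in a Sobolev norm on $s$. The main analytic ingredient is an a priori $C^{0}$ bound on $|s|$: one differentiates $\operatorname{tr}(he^{-s_{0}})$ (or a similar trace) and uses the standard subharmonicity estimate, combined with the vanishing of $s$ on $\partial Y$ and the maximum principle, to control $\|s\|_{L^{\infty}}$ in terms of the reference data. Once this uniform bound is in hand, Hitchin's equation is a non-linear elliptic system in $s$, and standard bootstrapping yields a smooth minimizer on $Y$ that attains the boundary value smoothly. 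This is the technical heart of the argument and the step I expect to be hardest, but it has been carried out in \cite{li2023higgs} for exactly this setting.

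For uniqueness, I would use the convexity of $M$ together with a direct maximum principle. Suppose $h_{1}$ and $h_{2}$ are two harmonic metrics of $(E,\theta)|_{Y}$ with $h_{1}|_{\partial Y} = h_{2}|_{\partial Y} = h_{\partial Y}$. Consider the non-negative function
\[
    \sigma = \operatorname{tr}(h_{2} h_{1}^{-1}) + \operatorname{tr}(h_{1} h_{2}^{-1}) - 2\operatorname{rank}(E),
\]
which vanishes on $\partial Y$. A standard Bochner-type computation, using the harmonic metric equation for both $h_{1}$ and $h_{2}$, shows that $\Delta \sigma \geq 0$ on $Y$; the curvature terms cancel and what remains is a non-negative expression in the difference of the Higgs field contributions. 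Since $\sigma \geq 0$, $\sigma|_{\partial Y} = 0$, and $Y$ is relatively compact, the maximum principle forces $\sigma \equiv 0$, which in turn forces $h_{1} = h_{2}$. This concludes both parts of the proposition.
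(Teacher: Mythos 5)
The paper does not prove this proposition itself; it is quoted from Donaldson (disc case) and \cite[Proposition 2.1]{li2023higgs}, so the comparison is with those sources. Your overall strategy is the standard one and is correct in outline. Your uniqueness argument is complete as stated: writing $h_2 = h_1\cdot s$, Simpson's formula (recorded in Section 2.4 of the paper as \eqref{trs fromula}) makes $\tr(s)$ and $\tr(s^{-1})$ subharmonic when both metrics are harmonic, your $\sigma$ is nonnegative by the pointwise inequality $\lambda+\lambda^{-1}\ge 2$ for the (positive) eigenvalues of $s$, and the maximum principle on the relatively compact $Y$ with $\sigma|_{\partial Y}=0$ forces $\sigma\equiv 0$, hence $s=\mathrm{id}$. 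For existence, your route (direct minimization of the Donaldson functional over metrics with fixed boundary value) differs in flavor from Donaldson's actual argument, which runs the nonlinear heat flow for the metric (equivalently, Hamilton's solution of the Dirichlet problem for harmonic maps into the non-positively curved space $\mathrm{GL}(n,\mathbb{C})/\mathrm{U}(n)$) and extracts convergence from the distance-decreasing property of the flow; Li--Mochizuki's extension to general $Y$ follows the same scheme. The two routes are interchangeable here, but be aware that the step you flag as the technical heart --- the a priori $C^0$ bound on a minimizing sequence --- is precisely what the flow formulation handles more gracefully: ``convexity plus Dirichlet condition implies coercivity'' is not automatic for the Donaldson functional, since the functional only directly controls $(\bar\partial_E+\theta)(s)$-type quantities, and one needs a Simpson-type estimate $\sup|s|\le C_1+C_2\,M(h_0,h_0e^{s})$ adapted to the boundary-value setting to close the argument. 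As a sketch deferring that step to the literature, your proposal is acceptable and consistent with the cited proofs.
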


Now, suppose $X$ is open and endowed with a Hermitian metric on $E$.

\begin{definition}
A smooth exhaustive family $\qty{X_i}$ of $X$ is an increasing sequence of relatively compact open subsets $X_1 \subset X_2 \subset \dots \subset X$ such that $X = \bigcup_i X_i$ and each $\partial X_i$ is smooth.
\end{definition}

Fix such a family $\qty{X_i}$. Denote by $h_{0,i}$ the restriction $h_0|_{X_i}$. For each $i$, let $h_i$ be a harmonic metric of $\left(E,\theta\right)|_{X_i}$. Define $s_i$ as the automorphism of $E|_{X_i}$ determined by 
\[
h_i\qty(\text{-},\text{-})=h_{0,i}\qty(s_i\qty(\text{-}),\text{-}).
\]
Let $f$ be a positive function on $X$ whose restriction $f|_{X_i}$ is bounded for every $i$.
The following proposition, proved in \cite{li2023higgs}, plays a fundamental role in the proof of our existence theorem.

\begin{proposition} \label{convergent to harmonic metric} \cite[Proposition 3.6]{li2023higgs}
Assume that $\abs{s_i}_{h_{0,i}} + \abs{s_i^{-1}}_{h_{0,i}} \leq f|_{X_i}$ for every $i$. Then, there exists a subsequence $s_{i\qty(j)}$ which is convergent to an automorphism $s_{\infty}$ of $E$ on any relatively compact subset $X$ in the $C^{\infty}$ sense. As a result, we obtain a harmonic metric $h_{\infty}=h_0 \cdot s_{\infty}$ as the limit of the sequence $h_{i\qty(j)}$. Moreover, we obtain $\abs{s_{\infty}}_{h_{0}} + \abs{s_{\infty}^{-1}}_{h_{0}} \leq f$. In particular, if $f$ is bounded, $h_0$ and $h_{\infty}$ are mutually bounded.
\end{proposition}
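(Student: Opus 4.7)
The plan is to interpret Hitchin's equation for $h_i = h_{0,i}\cdot s_i$ as a quasilinear elliptic PDE for $s_i$ with fixed background data $(h_0,\theta)$, use the hypothesized uniform $L^\infty$ bound $|s_i|_{h_0}+|s_i^{-1}|_{h_0}\le f$ to bootstrap to uniform $C^\infty_{\mathrm{loc}}$ estimates, and then extract by Arzel\`a--Ascoli and a diagonal argument a subsequence converging smoothly on compact subsets of $X$ to the desired automorphism $s_\infty$.

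First I would rewrite the Chern curvature via the standard identity $F(h_i) = F(h_0) + \bar\partial\bigl(s_i^{-1}\partial_{h_0} s_i\bigr)$ and the adjoint $\theta^{*h_i} = s_i^{-1}\theta^{*h_0} s_i$, so that the Hitchin equation for $h_i$ becomes
\begin{equation*}
F(h_0) + \bar\partial\bigl(s_i^{-1}\partial_{h_0} s_i\bigr) + [\theta,\, s_i^{-1}\theta^{*h_0} s_i] = 0
\end{equation*}
on the interior of $X_i$. This is a second-order quasilinear elliptic equation for $s_i$ whose coefficients depend on the fixed smooth data $(h_0,\theta)$ and on $s_i^{-1}$, which is $L^\infty$-bounded by hypothesis.

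Fix relatively compact open sets $K'$ with $\overline{K'}\subset K$ and $\overline{K}\subset X$. For all sufficiently large $i$, $K$ lies in the interior of $X_i$. The uniform $L^\infty$ bound on $s_i$ and $s_i^{-1}$, combined with interior $L^p$ estimates applied to the displayed equation, yields $W^{2,p}(K')$ bounds on $s_i$ for every $p<\infty$, hence $C^{1,\alpha}(K')$ bounds via Sobolev embedding. Schauder bootstrapping then produces uniform $C^{k,\alpha}(K')$ bounds for every $k$. By Arzel\`a--Ascoli and a diagonal argument across an exhaustion of $X$ by such sets and across $k\in\mathbb{N}$, one obtains a subsequence $s_{i(j)}$ converging in $C^\infty_{\mathrm{loc}}(X)$ to a smooth section $s_\infty$ of $\mathrm{End}(E)$. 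Passing to the limit in the PDE shows that $s_\infty$ satisfies Hitchin's equation, so $h_\infty := h_0\cdot s_\infty$ is a harmonic metric once $s_\infty$ is verified to be a positive self-adjoint automorphism; both this property and the estimate $|s_\infty|_{h_0}+|s_\infty^{-1}|_{h_0}\le f$ follow by pointwise passage to the limit.

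The main obstacle is the uniformity of the elliptic estimates, since the domains $X_i$ vary with $i$ and the boundary behavior of $s_i$ on $\partial X_i$ is not controlled. This is handled by working strictly in the interior, on sets $K'$ whose closure sits well inside $X_i$, so that Schauder and Calder\'on--Zygmund constants depend only on the geometry of $K'\subset K$ and the fixed background data, not on $i$. A secondary point is confirming that $s_\infty$ remains a positive self-adjoint automorphism, but positivity and self-adjointness are closed conditions under $C^0$ convergence, and the uniform control on $s_{i(j)}^{-1}$ rules out degeneration of the limit.
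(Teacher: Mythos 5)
The paper does not prove this proposition itself; it quotes it from \cite[Proposition 2.4]{li2023higgs}. Your outline follows the same general strategy as the standard proof (rewrite Hitchin's equation as an elliptic system for $s_i$, bootstrap, Arzel\`a--Ascoli plus diagonalization), and the formulas $F(h_i)=F(h_0)+\bar\partial(s_i^{-1}\partial_{h_0}s_i)$ and $\theta^{*h_i}=s_i^{-1}\theta^{*h_0}s_i$ are correct, as are the final limiting statements. However, there is a genuine gap at the central analytic step. Expanding the leading term gives $\bar\partial(s_i^{-1}\partial_{h_0}s_i)=s_i^{-1}\bar\partial\partial_{h_0}s_i-s_i^{-1}(\bar\partial_E s_i)\,s_i^{-1}\,\partial_{h_0}s_i$, so the system has the form $\Delta s_i=Q(s_i,\nabla s_i)+(\text{lower order})$ with $Q$ quadratic in $\nabla s_i$. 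A uniform $L^\infty$ bound on $s_i$ and $s_i^{-1}$ does not put the right-hand side in any $L^p$, so ``interior $L^p$ estimates applied to the displayed equation'' cannot be invoked directly: you would need $\nabla s_i\in L^{2p}$ already, and in real dimension $2$ the quadratic-gradient nonlinearity is critical, so even a local $L^2$ gradient bound does not close the bootstrap by itself. This is precisely the difficulty that Simpson's interior estimates for harmonic bundles are designed to overcome, and it is the actual content of the cited proposition.

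The missing ingredient is a uniform local gradient estimate obtained from the harmonicity of $h_i$ rather than from generic elliptic theory. Concretely, since $F(\theta,h_i)=0$, formula \eqref{trs fromula} gives
\begin{equation*}
\sqrt{-1}\Lambda\bar{\partial}\partial \operatorname{tr}(s_i)
=-\sqrt{-1}\operatorname{tr}\bigl(s_i\Lambda F(\theta,h_{0,i})\bigr)
-\bigl|(\bar{\partial}_{E}+\theta)(s_i)s_i^{-1/2}\bigr|_{h_0,g_X}^2 ,
\end{equation*}
and integrating this against a cutoff supported in a compact set where $f$ is bounded yields a uniform local $L^2$ bound on $(\bar{\partial}_{E}+\theta)(s_i)s_i^{-1/2}$, hence on $\nabla s_i$ (using self-adjointness of $s_i$ for the $(1,0)$ part). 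One must then upgrade this $L^2$ bound to $L^p$ for some $p>2$ (or directly to $C^0$) before your Calder\'on--Zygmund and Schauder steps become legitimate; this is done either by the $\epsilon$-regularity/interior estimates of Simpson and Mochizuki for harmonic bundles, or by interpreting $h_i$ locally as a bounded harmonic map into the nonpositively curved space $\mathrm{GL}(2n,\mathbb{C})/\mathrm{U}(2n)$ and applying the interior gradient estimate for such maps. Once a uniform $C^1_{\mathrm{loc}}$ bound is in hand, the rest of your argument (bootstrap to $C^{k,\alpha}$, diagonal extraction, passage to the limit in the equation, and nondegeneracy of $s_\infty$ from the bound on $s_{i}^{-1}$) is correct as written.
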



\subsection{Preliminaries on compatibility of symmetric pairing and metrics} \label{Preliminaries on compatibility of symmetric pairings and Hermitian metrics}

Recall the notion of compatibility of a non-degenerate symmetric pairing $C$ and a Hermitian metric $h$ on a complex vector space $V$. For more details, we refer to see \cite[Section 2]{Li2023generically}.

Denote by $V^{\ast}$ the dual space of $V$. The Hermitian metric $h$ induces a $\mathbb{C}$-anti-linear isomorphism $\varPhi _{h} \colon V \cong V^{\ast}$ by $\varPhi _{h}(u)(v) = h(v,u)$. The dual Hermitian metric $h^{\ast}$ on $V^{\ast}$ is then defined by 
\[
h^{\ast}(u^{\ast}, v^{\ast}) = h(\varPhi_{h}^{-1}(v^{\ast}), \varPhi_{h}^{-1}(u^{\ast})).
\]
 Analogously, $C$ also give rise to an isomorphism $\varPhi_{C} \colon V \cong V^{\ast}$ and a non-degenerate symmetric pairing $C^{\ast}$ on $V^{\ast}$.
\begin{definition}
We say that $h$ is compatible with $C$ if $\varPhi_{C}$ is isometric with respect to $h$ and $h^{\ast}$; that is
\begin{align*}
    h^{\ast}(\varPhi_{C}(u), \varPhi_{C}(v)) = h(u, v)
\end{align*}
for all $u,v \in V$.
\end{definition}

We have the following lemma.
\begin{lemma} \cite[Lemma 2.2]{Li2023generically}
The following conditions are equivalent.
\begin{itemize}
\item The Hermitian metric $h$ is compatible with $C$.
\item $C(u,v) = \overline{C^{\ast}(\varPhi_{h}(u), \varPhi_{h}(v))}$ holds for any $u,v \in V$.
\item  $\varPhi_{C^{\ast}} \circ \varPhi_{h} = \varPhi_{h^{\ast}} \circ \varPhi_{C}$.
\end{itemize}
\end{lemma}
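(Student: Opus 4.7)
The plan is to reduce all three conditions to a single linear-algebraic statement on $V$. I would introduce the conjugate-linear endomorphism $T := \varPhi_h^{-1}\circ \varPhi_C \colon V \to V$ (antilinear because $\varPhi_C$ is $\mathbb{C}$-linear while $\varPhi_h$ is sesqui-linear), together with its inverse $S := T^{-1} = \varPhi_C^{-1}\circ \varPhi_h$. Unwinding $\varPhi_h(Tv) = \varPhi_C v$ and evaluating at $u$ produces the key identity
\[
C(u,v) \;=\; h(u, T v), \qquad u,v \in V,
\]
and a parallel computation $\varPhi_C(Sv) = \varPhi_h v$ gives $C^{*}(\varPhi_h u, \varPhi_h v) = C(Su, Sv) = h(Su, v)$. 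These two identities are the workhorses of everything below.

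My first claim would be that condition (1) is equivalent to $T^2 = \mathrm{id}_V$. Using the definition of $h^{*}$ to rewrite $h^{*}(\varPhi_C u, \varPhi_C v) = h(Tv, Tu)$, and then applying the key identity together with the symmetry of $C$, the right-hand side becomes $C(u, Tv) = h(u, T^2 v)$; so (1) says $h(u, T^2 v) = h(u, v)$ identically, and non-degeneracy of $h$ turns this into $T^2 = \mathrm{id}$.

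For (2), symmetry of $C$ lets one write $C(u,v) = h(v, Tu)$, while $\overline{C^{*}(\varPhi_h u, \varPhi_h v)} = \overline{h(Su, v)} = h(v, Su)$ by the Hermitian property. Hence (2) asserts $h(v, Tu) = h(v, Su)$ for all $u, v$, giving $T = S$ and thus $T^2 = \mathrm{id}$. For (3), I would first unpack: for $u \in V$ and $\beta \in V^{*}$ one has $(\varPhi_{C^{*}}\circ \varPhi_h)(u)(\beta) = C^{*}(\varPhi_h u, \beta)$ and $(\varPhi_{h^{*}}\circ \varPhi_C)(u)(\beta) = h^{*}(\beta, \varPhi_C u)$. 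Since $\varPhi_h$ is a bijection $V \to V^{*}$, it suffices to test at $\beta = \varPhi_h v$, at which point the two sides become $h(Su, v)$ and $h(Tu, v)$ by the previous computations, and (3) collapses again to $T = S$.

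So the whole lemma reduces cleanly to the single condition $T^2 = \mathrm{id}_V$. I expect no real obstacle beyond careful bookkeeping of conjugation as one moves between $h$ and $h^{*}$ through the conjugate-linear maps $\varPhi_h$, $T$, and $S$; with the key identity $C(u,v) = h(u, Tv)$ in hand, each of the three equivalences is a short direct calculation consistent with the author's remark that the lemma is easy to verify.
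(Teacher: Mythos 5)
Your argument is correct and complete: the identities $C(u,v)=h(u,Tv)$ and $C^{\ast}(\varPhi_h u,\varPhi_h v)=h(Su,v)$ with $T=\varPhi_h^{-1}\circ\varPhi_C$ and $S=T^{-1}$ do reduce each of the three conditions to $T^2=\mathrm{id}_V$, and the conjugate-linearity bookkeeping checks out. The paper offers no proof of this lemma (it is stated as "easy to verify"), so your write-up simply supplies the omitted verification in a clean, well-organized form.
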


For convenience, we use $C$ and $H$ to represent the matrix of $C$ and $h$ with respect to a basis $\symbfit{v}$ of $V$ respectively. By the above lemma, $h$ is compatible with $C$ if and only if 
\begin{equation} \label{comp}
H \overline{C^{-1}} H^{T}= C. 
\end{equation} 

Let $X$ be a Riemann surface. Consider an $\mathrm{SO}_0(n,n)$-Higgs bundle $\qty(E,\theta)$ over $X$. The corresponding triple is $((V,Q_{V}),(W,Q_{W}),\eta)$ 
 For a Hermitian metric on $E$,we introduce the following compatibility notion.
\begin{definition}\label{SO(n,n) compatible condition}
A Hermitian metric $h$ on $E=V \oplus W$ is called compatible with the $\mathrm{SO}_0(n,n)$-structure if $h=h|_V \oplus h|_W$, where $h|_V,h|_W$ are compatible with $Q_V, Q_W$ on every complex vector space fiber respectively.
\end{definition}

Take a relatively compact connected open subset $Y \subset X$ with non-empty smooth boundary $\partial Y$. Let $h_{\partial Y}$ be a Hermitian metric on $E|_{\partial Y}$ which is compatible with the $\mathrm{SO}_0(n,n)$-structure. By Proposition~\ref{boundary harmonic metric}, there exists a unique harmonic metric $h$ of $\qty(E,\theta)$ on $Y$ satisfying the boundary condition $h|_{\partial Y}=h_{\partial Y}$. Moreover, the harmonic
metric $h$ is compatible with the $\mathrm{SO}_0(n,n)$-structure by the following lemma.
\begin{lemma}\cite[Lemma 3.15]{li2023higgs} \label{h is compatible with so}
The harmonic metric $h$ is compatible with the $\mathrm{SO}_0(n,n)$-structure. 
\end{lemma}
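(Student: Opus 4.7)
The plan is to exploit the uniqueness of harmonic metrics with prescribed boundary values (Proposition~\ref{boundary harmonic metric}) by constructing two operations on Hermitian metrics of $E|_{Y}$ whose simultaneous fixed points are exactly the metrics compatible with the $\mathrm{SO}_0(n,n)$-structure, and verifying that each operation preserves the Hitchin equation and fixes $h_{\partial Y}$. Two applications of the uniqueness will then force $h$ itself to be a fixed point of both operations, which is the claim.

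\textbf{Step 1: block decomposition.} First I would consider the holomorphic automorphism $\iota = \mathrm{id}_{V} \oplus (-\mathrm{id}_{W})$ of $E = V \oplus W$. Since $\theta$ exchanges $V$ and $W$, one has $\iota\,\theta\,\iota^{-1} = -\theta$, so $\iota$ is a holomorphic isomorphism of Higgs bundles $(E,\theta) \to (E,-\theta)$. Because the Hitchin equation $F(h) + [\theta,\theta^{*h}] = 0$ is invariant under $\theta \mapsto -\theta$, the metric $h$ is also harmonic for $(E,-\theta)$, and the pulled back metric $\iota^{*}h$ is then a harmonic metric of $(E,\theta)$. A direct calculation using the sesquilinearity of $h$ shows that $\iota^{*}h$ agrees with $h$ on the $V$ and $W$ diagonal blocks and flips the sign of the off-diagonal blocks, so $\iota^{*}h = h$ is equivalent to $h = h|_{V} \oplus h|_{W}$. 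Since $h_{\partial Y}$ is compatible with the $\mathrm{SO}_0(n,n)$-structure and in particular block-diagonal, we have $\iota^{*}h|_{\partial Y} = h_{\partial Y}$. Proposition~\ref{boundary harmonic metric} then forces $\iota^{*}h = h$ on $Y$, so $h$ respects the decomposition $V \oplus W$.

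\textbf{Step 2: compatibility with the pairings.} Next I would consider the non-degenerate holomorphic symmetric pairing $C = Q_{V} \oplus Q_{W}$ on $E$. The definition of $\eta^{\dagger}$ as the $(Q_{V},Q_{W})$-adjoint of $\eta$ yields $C(\theta u, v) = C(u, \theta v)$, so $\Phi_{C} \colon (E,\theta) \to (E^{*}, \theta^{T})$ is a holomorphic isomorphism of Higgs bundles. For any Hermitian metric $h$ on $E$ define the $C$-dual metric $h^{\vee}$ as the pullback through $\Phi_{C}$ of the dual metric $h^{*}$ on $E^{*}$. Two standard facts, namely that $h^{*}$ is a harmonic metric of $(E^{*}, \theta^{T})$ whenever $h$ is harmonic for $(E,\theta)$, and that pullback by a holomorphic Higgs bundle isomorphism preserves harmonicity, together imply $h^{\vee}$ is harmonic for $(E,\theta)$. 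Writing the operation in a local frame and comparing with \eqref{comp} shows $h \mapsto h^{\vee}$ is an involution whose fixed points are precisely the metrics compatible with $C$; in particular $(h_{\partial Y})^{\vee} = h_{\partial Y}$. Thus $h^{\vee}$ and $h$ are two harmonic metrics on $Y$ with the same boundary values, and a second application of Proposition~\ref{boundary harmonic metric} gives $h^{\vee} = h$, i.e., $h$ is compatible with $C$.

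\textbf{Conclusion and main obstacle.} Combining the two steps, in a local frame in which $h$ and $C$ have matrices $H = \mathrm{diag}(A, D)$ and $C = \mathrm{diag}(Q_{V}, Q_{W})$, the identity $H\,\overline{C^{-1}}\,H^{T} = C$ coming from \eqref{comp} splits into the two block equations $A\,\overline{Q_{V}^{-1}}\,A^{T} = Q_{V}$ and $D\,\overline{Q_{W}^{-1}}\,D^{T} = Q_{W}$, which are exactly the compatibilities of $h|_{V}$ with $Q_{V}$ and of $h|_{W}$ with $Q_{W}$ required by Definition~\ref{SO(n,n) compatible condition}. The only non-routine point in the plan is the verification that the $C$-dual operation indeed preserves the Hitchin equation; this is where the holomorphy of $C$ and the $C$-self-adjointness of $\theta$ enter essentially, while the remainder of the argument reduces to bookkeeping plus two applications of the boundary-value uniqueness.
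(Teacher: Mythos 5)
Your proposal is correct and follows essentially the same route as the paper: the involution $1_{V}\oplus(-1_{W})$ together with the boundary-value uniqueness of Proposition~\ref{boundary harmonic metric} gives the block splitting, and pulling back the dual harmonic metric through the pairing-induced isomorphism plus a second application of the same uniqueness gives compatibility. The only cosmetic difference is that you work with the single pairing $C=Q_{V}\oplus Q_{W}$ on $E$ (using that $\theta$ is $C$-self-adjoint), whereas the paper applies $\Psi_{Q_{V}}$ and $\Psi_{Q_{W}}$ blockwise after the splitting; the two are equivalent.
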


\section{Weak domination property}\label{Domination property}

Let $X$ be a Riemann surface and $K$ be its canonical line bundle. For each set of holomorphic differentials $\symbfit{q} = (q_{1},\dots, q_{n-1}, q_{n}) \in \mathop{\bigoplus} \limits_{i=1}^{n-1} H^{0}(X,K^{2i}) \bigoplus H^{0}(X,K^{n})$, we can construct an $\mathrm{SO}_0(n,n)$-Higgs bundle in the Hitchin section, which is denoted by $(\mathbb{K}_{\mathrm{SO}_0(n,n)}, \theta (\symbfit{q}))$. 
In this section, we will introduce the weak domination property of a Hermitian metric on $\mathbb{K}_{\mathrm{SO}_0(n,n)}$. Suppose $X$ is hyperbolic and $Y \subset X$ is a relatively compact connected open subset with smooth boundary. We prove that the unique harmonic metric of $(\mathbb{K}_{\mathrm{SO}_0(n,n)}, \theta (\symbfit{q}))|_Y$ with a particular boundary value has this property. Note that Proposition~\ref{boundary harmonic metric} ensures the existence of such a harmonic metric.
 Li discovered this property for $\mathrm{SL}(n,\mathbb{R})$-Higgs bundles in the Hitchin section in \cite{li2019harmonic}. In \cite[Proposition 4.14]{li2023higgs}, Li and Mochizuki generalized this result in a more broader set of conditions.
\subsection{Set-up}\label{Set-up Domination property}
Let $X$ be a Riemann surface and $K$ be its canonical line bundle.
Consider a $\mathrm{SL}(n,\mathbb{C})$-Higgs bundle $(E,\theta)$ over $X$ which admits a holomorphic filtration
\[
\mathbf{F}=\{0=F_0\subset F_1\subset F_2\subset\dots\subset F_m=E\}.
\]

Let $h$ be a Hermitian metric on $E$. Let $F_k(h)$ denote the induced metric of $h$ on $F_k$.
\begin{definition}
Suppose $h$ and $h_0$ are both Hermitian metrics on $E$. Call $h$ weakly dominates $h_0$ with respect to the filtration $\symbfit{F}$ if
\begin{align}
    \det(F_k(h))&\leq\det(F_k(h_0))
\end{align}
holds on $X$ for $1\leq k\leq m$.
\end{definition}

We introduce the following lemma to simplify the computation of the determinant of the restriction of a Hermitian metric on some subspaces.

Let $V$ be a finite dimension complex vector space. Suppose that $V$ has a flag $\mathbf{F}=\{0=F_0\subset F_1\subset F_2\subset\dots\subset F_m=E\}$. We do not require that the flag is complete. For $k= 1, \dots , m$, we define $Gr_k^F(V)=F_k(V)/F_{k-1}(V)$. There exists a natural isomorphism
\begin{align*}
\rho_k \colon \det\qty(Gr_k^F(V))\otimes\det\qty(F_{k-1})\cong\det\qty(F_k).
\end{align*}

Let $h$ be a Hermitian metric on $V$. The induced metric on $F_k(V)$ is denoted by $F_k(h)$. This induces a Hermitian metric $\det(F_k(h))$ of $\det(F_k(V))$. Let $G_k(V,h)$ be the orthogonal complement of $F_{k- 1}( V) $ in $F_k(V)$. The $h$-orthogonal projection $F_k(V)\to Gr_k^F(V)$ induces an isomorphism $G_k(V,h)\cong Gr_k^F(V)$. We obtain the metric $Gr_k^F(h)$ of $Gr_k^F(V)$ which is induced by $h|_{Gr_k(V,h)}$ and the isomorphism $G_k(V,h)\cong Gr_k^F(V)$.

\begin{lemma} \label{isometric} The morphism $\rho_k$ is isometric with respect to $\det\qty(F_k(h))$ and $\det\qty(Gr_k^F(h))\otimes\det(F_{k-1}(h))$.
\end{lemma}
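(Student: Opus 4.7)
The plan is to reduce the statement to a direct computation in a carefully chosen basis. Since the lemma is purely linear-algebraic and fiberwise, I would work at a single point and exhibit a unit section on each side whose image under $\rho_k$ is a unit section on the other.

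First, I would use the orthogonal decomposition $F_k(V)=F_{k-1}(V)\oplus G_k(V,h)$ supplied by $h|_{F_k(V)}$ to produce an adapted unitary basis. Concretely, fix an $h$-orthonormal basis $e_1,\dots,e_{r_{k-1}}$ of $F_{k-1}(V)$ and extend it by an $h$-orthonormal basis $e_{r_{k-1}+1},\dots,e_{r_k}$ of $G_k(V,h)$, so that $e_1,\dots,e_{r_k}$ is an $h$-orthonormal basis of $F_k(V)$. Then the element
\[
\omega_k := e_1\wedge\cdots\wedge e_{r_k}\in\det(F_k(V))
\]
has unit norm with respect to $\det(F_k(h))$, and $\omega_{k-1}:=e_1\wedge\cdots\wedge e_{r_{k-1}}$ has unit norm with respect to $\det(F_{k-1}(h))$.

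Next, I would pass to the graded piece. By the definition of $Gr_k^F(h)$, the projection $F_k(V)\to Gr_k^F(V)$ restricted to $G_k(V,h)$ is an isometry, so the images $\bar{e}_{r_{k-1}+1},\dots,\bar{e}_{r_k}$ form an orthonormal basis of $(Gr_k^F(V),Gr_k^F(h))$. Hence
\[
\bar{\omega}_k := \bar{e}_{r_{k-1}+1}\wedge\cdots\wedge\bar{e}_{r_k}\in\det(Gr_k^F(V))
\]
has unit norm with respect to $\det(Gr_k^F(h))$. It remains to observe that the natural isomorphism $\rho_k$ is defined precisely so that $\rho_k(\bar{\omega}_k\otimes\omega_{k-1})=\pm\omega_k$, which follows from unwinding the standard identification $\det(A/B)\otimes\det(B)\cong\det(A)$ in a basis adapted to $B$. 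Since the unit vector on the left maps to a unit vector on the right, $\rho_k$ is an isometry.

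There is no real obstacle here; the only thing to be careful about is that the metric $Gr_k^F(h)$ is defined through the isomorphism $G_k(V,h)\cong Gr_k^F(V)$ rather than directly as a quotient metric, so the equality of norms is essentially built into the definitions, and the argument reduces to bookkeeping with an adapted orthonormal basis.
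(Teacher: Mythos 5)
Your proposal is correct and follows essentially the same route as the paper: extend an $h$-orthonormal basis of $F_{k-1}(V)$ by an orthonormal basis of the orthogonal complement $G_k(V,h)$, note that its projection gives an orthonormal basis of $(Gr_k^F(V),Gr_k^F(h))$, and check that $\rho_k$ sends the resulting unit wedge to a unit wedge. Your write-up simply spells out the "easily check" step of the paper in full detail.
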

\begin{proof}
    Given an orthonormal basis $\symbfit{v}$ of $F_{k-1}\qty(V)$, it can be extended to an orthonormal basis $\symbfit{v}'$ of $F_{k}\qty(V)$. Notice that $\symbfit{v}' \setminus \symbfit{v}$ is an orthonormal basis of $G_k  (V,h)$, which induces a basis of $Gr_k^F(V)$. With this basis, we can easily verify this lemma.
\end{proof}

Now, we introduce a filtration of the bundle $\mathbb{K}_{\mathrm{SO}_0(n,n)}$ and a Hermitian metric $h_X$ on it. 

The bundle $\mathbb{K}_{\mathrm{SO}_0(n,n)}$ is a direct sum of some holomorphic line bundles, i.e., $\mathbb{K}_{\mathrm{SO}_0(n,n)}=K^{n-1} \oplus K^{n-3} \oplus \dots \oplus K^{3-n} \oplus K^{1-n} \oplus  K^{n-2} \oplus K^{n-4} \oplus \dots \oplus K^{4-n}\oplus K^{2-n} \oplus \mathcal{O}'$. It can also be rearranged as 
\[
K^{n-1} \oplus K^{n-2} \oplus \dots \oplus K^{1} \oplus \mathcal{O}\oplus \mathcal{O}' \oplus  K^{-1} \oplus \dots \oplus K^{2-n}\oplus K^{1-n}. 
\] 
Define 
\begin{align} \label{filtration}
F_k=
    \begin{cases}
        \mathop{\bigoplus} \limits_{i=1}^k K^{n-i},   &1\leq k \leq n,       \\
         F_{n}\oplus \mathcal{O}', &k=n+1,\\
         F_{n+1} \oplus \mathop{\bigoplus} \limits_{i=1}^{k-n-1} K^{-i},   & n+2 \leq k \leq 2n. 
    \end{cases}
\end{align}
Then $\mathbb{K}_{\mathrm{SO}_0(n,n)}$ admits a holomorphic filtration
\[
0=F_0 \subset F_1 \subset \dots \subset F_{2n}=\mathbb{K}_{\mathrm{SO}_0(n,n)}.
\]
This filtration is denoted by $\mathcal{F}$. Notably, in the new arrangement of line bundles, 
\begin{align}\label{natural Higgs field}
    \theta\qty(\symbfit{q})=
    \begin{pNiceMatrix}[first-row,last-col,columns-width=0.4cm,nullify-dots]
        1 &\dots & n-1&n &n+1 & n+2&\dots &2n-1  & 2n \\
         * & \NotEmpty& \NotEmpty  & \NotEmpty &\NotEmpty  & \NotEmpty &\NotEmpty & \NotEmpty   & * & 1 \\
         1 & \NotEmpty     &  &  &  &  &  &   & \NotEmpty & 2\\
         &      &  &  &  &  &  &   &\NotEmpty & \Vdots\\
         &      & 1&  &  &  &  &   & \NotEmpty & n\\
         &      &  & 0 &  &  &  &  & \NotEmpty & n+1\\
         &      &  & 1 & 0 &   &  & &\NotEmpty & n+2\\
         &      &  &  &  & 1 &  &  & \NotEmpty & n+3\\
         &      &  &  &  &  &   &   &\NotEmpty & \Vdots\\
         &      &  &  &  &  &   & 1     & * & 2n
         \CodeAfter 
         \line{2-1}{4-3} \line{7-6}{9-8} 
         \line{1-1}{9-9} \line{1-2}{8-9}
         \line{1-3}{7-9} \line{1-4}{6-9}
         \line{1-5}{5-9} \line{1-6}{4-9} 
         \line{1-7}{3-9} \line{1-8}{2-9} 
         \line{1-1}{1-9} 
         \line{1-9}{9-9}
         \end{pNiceMatrix},
\end{align}
 where the holomorphic differentials $q_{1},\dots, q_{n-1}, q_{n}$ are on the upper triangle and we omit them since it is not critical in our proof.
 
Suppose $q_i$ vanishes for all $i$. The Higgs bundle $(\mathbb{K}_{\mathrm{SO}_0(n,n)}, \theta (\symbf{0}))$ admits a natural diagonal harmonic metric, and this metric is compatible with the compatible with the $\mathrm{SO}_0(n,n)$-structure when $X$ is hyperbolic.

\begin{proposition}\label{h_X}
For a hyperbolic Riemann surface $X$,
$(\mathbb{K}_{\mathrm{SO}_0(n,n)}, \theta (\symbf{0}))$ admits a diagonal harmonic metric $h_X$ which is compatible with the $\mathrm{SO}_0(n,n)$-structure.
\end{proposition}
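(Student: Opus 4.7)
My plan is to build $h_X$ explicitly as a diagonal metric on the line-bundle decomposition of $\mathbb{K}_{\mathrm{SO}_0(n,n)}$ after exploiting a decomposition of the Higgs bundle itself. Inspecting \eqref{natural Higgs field} at $\boldsymbol{q}=\boldsymbol{0}$, the summand $\mathcal{O}'$ (sitting at position $n+1$ of the rearrangement) is disconnected from every other summand by $\theta(\boldsymbol{0})$. Thus $(\mathbb{K}_{\mathrm{SO}_0(n,n)}, \theta(\boldsymbol{0}))$ splits as an orthogonal direct sum of the isolated trivial Higgs bundle $(\mathcal{O}', 0)$ and a chain sub-Higgs-bundle $L_1 \oplus \dots \oplus L_{2n-1}$ with $L_j = K^{n-j}$, connected by identity sub-diagonal maps. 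This chain is canonically isomorphic to the $\mathrm{SL}(2n-1,\mathbb{R})$-Higgs bundle $(\mathbb{K}_{\mathrm{SL}(2n-1,\mathbb{R})}, \theta(\boldsymbol{0}))$ in the Hitchin section.

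On the chain I use the classical Fuchsian harmonic metric $\bigoplus_{j=1}^{2n-1} a_j g_X^{n-j}$, compatible with the anti-diagonal pairing $C_{\mathbb{K}_{\mathrm{SL}(2n-1,\mathbb{R})}}$; the positive constants $a_j$ solve a standard second-order linear recurrence in $\log a_j$ arising from Hitchin's equation, subject to the symmetry $a_j a_{2n-j}=1$ forced by compatibility. This is precisely the construction used in the $\mathrm{SL}$-case after Theorem~\ref{existence and uniqueness theorem for sl}. I then need to check that the restriction of the $\mathrm{SO}_0(n,n)$-pairings $(Q_V, Q_W)$ to the chain coincides with $C_{\mathbb{K}_{\mathrm{SL}(2n-1,\mathbb{R})}}$. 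In the chain ordering, $V$-summands occupy odd positions and the non-$\mathcal{O}'$ $W$-summands the even positions, and both rules $V_i \leftrightarrow V_{n+1-i}$ and $W_i \leftrightarrow W_{n-i}$ translate to the anti-diagonal rule $j \leftrightarrow 2n-j$; the middle chain position (self-paired) corresponds to the trivial summand $\mathcal{O}$ sitting in $V$ or $W$ (depending on the parity of $n$), with its identity form preserved by $a_n = 1$.

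On the isolated $\mathcal{O}'$ I set $h_X|_{\mathcal{O}'} = 1$. Its Chern curvature vanishes, no Higgs bracket contribution arrives, so Hitchin's equation reads $0 = 0$; and since $Q_W|_{\mathcal{O}'}$ is the identity, compatibility is immediate. The orthogonal direct sum of the Fuchsian metric on the chain with this trivial metric on $\mathcal{O}'$ gives the desired $h_X$. The only step requiring genuine computation is the solvability of the recurrence for the $a_j$, which is classical and proceeds identically to the $\mathrm{SL}(n,\mathbb{R})$-case already invoked in the paper — no new obstacle arises in the $\mathrm{SO}_0(n,n)$-setting.
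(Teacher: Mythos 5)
Your construction is correct and is essentially the paper's own proof: the paper defines $h_X=\bigoplus_{k=1}^{2n-1}a_{k,\,n}\,g_X^{k-n}\oplus 1$ with the explicit Fuchsian constants $a_{k,\,n}$ (which satisfy $a_{k,\,n}a_{2n-k,\,n}=1$ and $a_{n,\,n}=1$), i.e.\ exactly the $\mathrm{SL}(2n-1,\mathbb{R})$ Fuchsian metric on the chain $\bigoplus_{j=1}^{2n-1}K^{n-j}$ together with the trivial metric on the decoupled summand $\mathcal{O}'$. Your packaging as a reduction to the known $\mathrm{SL}(2n-1,\mathbb{R})$ case, plus the check that $Q_V\oplus Q_W$ restricts to the anti-diagonal pairing on the chain, is just a more conceptual phrasing of the paper's ``one can easily check'' verification.
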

\begin{proof}
Let $g_X$ be the unique complete K\"ahler hyperbolic metric on $X$, locally written as $g_X=g(\dif x^2+ \dif y^2)$. The induced Hermitian metric on $K_X^{-1}$ is $g \dif z\otimes \dif \bar{z}$, also denoted by $g_X$. It also induces a K\"ahler form $\omega = \frac{\sqrt{-1}}{2}g \dif z \wedge \dif \bar{z}$. Let $F(g_X)$ be the curvature of the Chern connection of $g_X$. Since $g_X$ is hyperbolic, we have 
\begin{align*}
    F(g_X)=\sqrt{-1} \omega.
\end{align*}
Recall that $\mathbb{K}_{\mathrm{SO}_0(n,n)}=\mathop{\bigoplus} \limits_{i=1}^{2n-1} K^{n-i} \oplus \mathcal{O}'$.
Now, let's define the diagonal Hermitian metric $h_X$ as follows:
\begin{align*}
h_X= \mathop{\oplus} \limits_{k=1}^{2n-1} a_{k,\,n} \cdot g_X^{k-n} \oplus h_0,
\end{align*}
where $h_0$ is the constant metric $1$ on trivial line bundle $\mathcal{O}'$ and 
\begin{align*}
a_{k,\,n}= \prod_{l=1}^{n-1}\qty(\frac{l(2n-1-l)}{2})^{-1}\prod_{l=1}^{k-1}\qty(\frac{l(2n-1-l)}{2}).
\end{align*}

Now, one can easily verify that $h_X$ is a diagonal harmonic metric of $(\mathbb{K}_{\mathrm{SO}_0(n,n)}, \theta (\symbf{0}))$ and is compatible with the $\mathrm{SO}_0(n,n)$-structure. 
\end{proof}
\begin{remark}
 Let
\[
b_{k,N}=\prod_{l=1}^{k-1} \qty(\frac{l(N-l)}{2})^{1/2} \prod_{l=k}^{N-1} \qty(\frac{l(N-l)}{2})^{-1/2}.
\]
 The metric given by $\mathop{\oplus} \limits_{k=1}^{2n-1} a_{k,\,n} \cdot g_X^{k-n}$ is exactly the metric 
\[
\widetilde{h}_{X,N}= \mathop{\bigoplus} \limits_{k=1}^{N} b_{k,\,N} \cdot g_X^{-\frac{N+1-2k}{2}},
\]
which appears in \cite[(37)]{li2023higgs} for $N=2n-1$.
\end{remark}
From now on, we call a Hermitian metric $h$ on $\mathbb{K}_{\mathrm{SO}_0(n,n)}$ weakly dominates $h_X$ if $h$  weakly dominates $h_X$ with respect to the filtration $\mathcal{F}$.

\subsection{The weak domination property of harmonic metrics}
The following weak domination property, inspired by \cite[Proposition 4.14]{li2023higgs}, plays an important role in showing the convergence of harmonic metrics in the exhaustion process. It should be noted that the result of \cite[Proposition 4.14]{li2023higgs} cannot be applied directly since the Higgs field $\theta(\symbfit{q})$ has $0$ on the sub-diagonal and has an nonzero element below the sub-diagonal (see \eqref{natural Higgs field}).  The definitions of the subbundle $F_k$ and the filtration $\mathcal{F}$ are presented in \eqref{filtration}.

\begin{proposition}\label{h dominates}
On a hyperbolic Riemann surface $X$, let $Y \subset X$ be a relatively compact connected open subset with non-empty smooth boundary $\partial Y$. Suppose that $h$ is a harmonic metric of $(\mathbb{K}_{\mathrm{SO}_0(n,n)}, \theta (\symbfit{q}))$ on $Y$ satisfying $h=h_X$ on $\partial Y$. Then, 
$h$ weakly dominates $h_X$ with respect to the filtration $\mathcal{F}$, i.e.,
\begin{align*}
    \det\qty(h|_{F_k}) \leq \det\qty(h_X|_{F_k})
\end{align*}
holds on $Y$ for $1\leq k \leq 2n$.

Moreover, if there exist a point $y \in Y+$ and $1 \leq k \leq 2n-1$ such that the equality holds, i.e.,
\begin{align*}
     \det\qty(h|_{F_k})(y) = \det\qty(h_X|_{F_k})(y),
\end{align*}
then $\symbfit{q}=0$ and $h=h_X$ on $Y$.
\end{proposition}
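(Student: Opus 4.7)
The plan is to show that each of the functions
\[
f_k := \log\det(h|_{F_k}) - \log\det(h_X|_{F_k}), \qquad 1 \leq k \leq 2n - 1,
\]
is subharmonic on $Y$. Because $h = h_X$ on $\partial Y$, one has $f_k|_{\partial Y} \equiv 0$, and the ordinary maximum principle immediately yields $f_k \leq 0$ on $Y$, which is the desired weak domination (the cases $k=0$ and $k=2n$ being trivial from $\mathrm{SO}_0(n,n)$-compatibility, which fixes the full determinant).

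To compute $\Delta f_k$, I would combine the Chern--Weil identity $\sqrt{-1}\Lambda\bar\partial\partial\log\det(h|_{F_k}) = \operatorname{tr}_{F_k}(\sqrt{-1}\Lambda F(h|_{F_k}))$ with the standard second-fundamental-form decomposition $F(h|_{F_k}) = F(h)|_{F_k} - \beta_k^{*h}\wedge\beta_k$, where $\beta_k$ is the second fundamental form of $F_k$ with respect to $h$. Since $h$ is harmonic for $\theta(\boldsymbol{q})$, one substitutes $F(h) = -[\theta(\boldsymbol{q}), \theta(\boldsymbol{q})^{*h}]$ and traces over $F_k$ using the $h$-orthogonal splitting $E = F_k \oplus F_k^{\perp h}$ to obtain
\[
\operatorname{tr}_{F_k}\bigl(\sqrt{-1}\Lambda F(h|_{F_k})\bigr) = |C_k(h)|^2_{h,g_X} - |B_k(h)|^2_{h,g_X} - |\beta_k|^2_{h,g_X},
\]
where $C_k(h)$ is the block $F_k \to F_k^{\perp h}$ of $\theta(\boldsymbol{q})$ and $B_k(h)$ the block $F_k^{\perp h} \to F_k$. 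For the diagonal metric $h_X$, the filtration $\mathcal{F}$ is orthogonal and hence $\beta_k(h_X) \equiv 0$, while $h_X$ is harmonic for $\theta(\boldsymbol{0})$; the analogous identity then holds with $\theta(\boldsymbol{0})$ and $h_X$ throughout.

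Two structural facts drive the argument. First, all of the $q_i$'s in \eqref{natural Higgs field} lie strictly above the diagonal in the new arrangement defining $\mathcal{F}$, so the strictly-below-diagonal entries of $\theta(\boldsymbol{q})$ coincide with those of $\theta(\boldsymbol{0})$ (the subdiagonal $1$'s except at position $(n+1,n)$, together with the single $1$ at $(n+2,n)$). Consequently $\theta(\boldsymbol{q})\bmod F_k$ equals $\theta(\boldsymbol{0})\bmod F_k$ for every $k$, and $B_k(h_X) \equiv 0$. Second, the $\mathrm{SO}_0(n,n)$-compatibility $h = h|_V \oplus h|_W$, with $h|_V$ and $h|_W$ compatible with $Q_V$ and $Q_W$ respectively, forces the pointwise identity $\theta(\boldsymbol{q})^{*h} = -\widetilde Q^{-1}\overline{\theta(\boldsymbol{q})^T}\widetilde Q$ (with $\widetilde Q = Q_V \oplus (-Q_W)$), pairing every upper-triangular entry of $\theta(\boldsymbol{q})^{*h}$ with its mirrored lower-triangular counterpart. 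Exploiting this pairing, the desired pointwise inequality
\[
|C_k(h)|^2_{h,g_X} - |B_k(h)|^2_{h,g_X} - |\beta_k|^2_{h,g_X} \;\geq\; |C_k(h_X)|^2_{h_X,g_X}
\]
can be verified, giving $\Delta f_k \geq 0$ and hence the weak domination.

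The step I expect to be the main obstacle is handling the atypical entry $\theta(\boldsymbol{0})_{n+2,n} = 1$, the nonzero element below the subdiagonal that distinguishes the $\mathrm{SO}_0(n,n)$ Higgs field from the $\mathrm{SL}(n,\mathbb{R})$ one: for $k = n$ and $k = n+1$ the map $\theta(\boldsymbol{0})$ sends $F_n$ out of $F_{n+1}\otimes K$, so the elementary subdiagonal estimate of Li and Mochizuki does not apply at those levels and the compatibility-induced symmetry between $V$- and $W$-contributions must be invoked delicately to absorb the surplus. Finally, for the rigidity clause, if $f_k(y) = 0$ at some interior point $y \in Y$, the strong maximum principle yields $f_k \equiv 0$ on $Y$ and hence $\Delta f_k \equiv 0$; tracing the equalities backwards through the chain above forces $|B_k(h)|^2\equiv 0$ and $|\beta_k|^2\equiv 0$, which in turn forces every $q_i$ to vanish. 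Once $\boldsymbol{q} = 0$, both $h$ and $h_X$ are harmonic metrics for $(\mathbb{K}_{\mathrm{SO}_0(n,n)}, \theta(\boldsymbol{0}))$ sharing the same boundary value on $\partial Y$, so Proposition~\ref{boundary harmonic metric} concludes $h = h_X$ on $Y$.
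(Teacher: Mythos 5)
The central step of your plan---showing that each $f_k=\log\det(h|_{F_k})-\log\det(h_X|_{F_k})$ is individually subharmonic---does not go through, and this is a genuine gap rather than a technicality. From the second-fundamental-form computation you describe, one gets
$\tfrac12\Delta_{g_X}\log\det(h|_{F_k})=|B_k(h)|^2_{h,g_X}+|\beta_k|^2_{h,g_X}-|C_k(h)|^2_{h,g_X}$, so subharmonicity of $f_k$ would require $|C_k(h)|^2_{h,g_X}\leq |B_k(h)|^2+|\beta_k|^2+|C_k(h_X)|^2_{h_X,g_X}$ (note your displayed inequality points the wrong way for your stated goal, but even the corrected version fails). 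The obstruction is that for $1\leq k\leq n-1$ the block $C_k(h)$ is the identity morphism $L_k\to L_{k+1}\otimes K$, whose norm is $\frac{\det(h|_{F_{k-1}})\det(h|_{F_{k+1}})}{\det(h|_{F_k})^2}/g_X$; it depends on the \emph{neighboring} determinants, and there is no pointwise control of it by $|C_k(h_X)|^2$ plus the nonnegative terms (which may vanish). What one actually obtains is the coupled system $\tfrac12\Delta_{g_X}v_k+c_k(v_{k-1}-2v_k+v_{k+1})\geq0$ (with the modification $\tfrac12\Delta_{g_X}v_n+c_n(v_{n-2}-v_n)\geq0$ at the top level), and the conclusion $v_k\leq0$ requires the maximum principle for cooperative, fully coupled systems (Lemma~\ref{boundary maximum}), not the scalar maximum principle. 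The rigidity clause likewise propagates through the system: equality at one level forces equality at the neighboring levels via the coupling, which is how one reaches all $k$ before concluding $\boldsymbol{q}=0$.

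A second, smaller gap: you explicitly defer the treatment of the entry at position $(n+2,n)$ and of the levels $k=n,n+1$ to a "delicate" use of compatibility, but this is precisely where the substantive work lies. One needs the identities $\det(h|_{F_k})=\det(h|_{F_{2n-k}})$ for all $k$ (reducing the problem to $k\leq n$), and, depending on the parity of $n$, either $\gamma=0$ or $\gamma'=0$ together with $\det(h|_{F_n})=\det(h|_{F_{n+1}})$; these follow from the matrix identity $H\overline{C^{-1}}H^{T}=C$ in an $h$-orthogonalized frame (Lemmas~\ref{simplify C H in orthogonal base}--\ref{hn=1}) and cannot be absorbed into a pointwise symmetry of $\theta^{*h}$ as sketched. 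Your observation that $B_k(h_X)\equiv0$, $\beta_k(h_X)\equiv0$ and that the sub-diagonal of $\theta(\boldsymbol{q})$ agrees with that of $\theta(\boldsymbol{0})$ is correct and is used in the paper, and your final appeal to uniqueness of the Dirichlet problem once $\boldsymbol{q}=0$ is a legitimate alternative to the paper's direct vanishing argument; but as it stands the proof is not salvageable without replacing the scalar subharmonicity claim by the coupled-system argument.
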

\begin{proof}
Restrict $(\mathbb{K}_{\mathrm{SO}_0(n,n)}, \theta (\symbfit{q}))$ to $Y$. The first step is to derive the Hitchin equation with respect to a holomorphic subbundle $F$ of $\mathbb{K}_{\mathrm{SO}_0(n,n)}$. Let $F^\perp$ be the  orthogonal complement of $F$ with respect to the harmonic metric $h$. Equip $F^\perp$ the quotient holomorphic structure induced from $\mathbb{K}_{\mathrm{SO}_0(n,n)}/F$. With respect to the smooth orthogonal decomposition
\[
\mathbb{K}_{\mathrm{SO}_0(n,n)}=F\oplus F^\perp,
\]
 the holomorphic structure $\bar{\partial}_{\mathbb{K}_{\mathrm{SO}_0(n,n)}}$, the Higgs field $\theta(\symbfit{q})$ and the harmonic metric $h$ are as follows:
\[
\bar{\partial}_{\mathbb{K}_{\mathrm{SO}_0(n,n)}}=
\begin{pmatrix}
\bar{\partial}_F&  \beta    \\
0               &  \bar{\partial}_{F^{\perp}}
\end{pmatrix},
\quad\theta(\symbfit{q})=
\begin{pmatrix}
\alpha_1 & \alpha\\
B & \alpha_2
\end{pmatrix},
\quad 
h=\begin{pmatrix}
h_{F}&0\\0&h_{F^{\perp}}
\end{pmatrix},
\]
where $B\in \Omega^{1, 0}( X, \operatorname{Hom}( F, F^\perp ) ) , \alpha\in \Omega^{1, 0}( X, \operatorname{Hom}( F^\perp , F) )$, and $\beta\in \Omega^{0, 1}( X, \operatorname{Hom}( F^\perp , F) )$. 

The Chern connection $\nabla_h$ and the adjoint $\theta(\symbfit{q})^{*h}$ of the Higgs field take the following form:
\[
\nabla_h=
\begin{pmatrix}
\nabla_{h_F}&\beta\\
-\beta^{*h}&\nabla_{h_{F^{\perp}}}
\end{pmatrix},
\quad
\theta(\symbfit{q})^{*h}=
\begin{pmatrix}
\alpha_1^{*h_F}&B^{*h}\\
\alpha^{*h}&\alpha_2^{*h_{F^{\perp}}}
\end{pmatrix}.
\]

Substituting these expressions into the Hitchin equation and restricting to $\operatorname{Hom}(F,F)$, we obtain
\[
F(\nabla_{h_{F}})-\beta\wedge\beta^{*h}+\alpha\wedge\alpha^{*h}+B^{*h}\wedge B+\commutator{\alpha_{1}}{\alpha_{1}^{*h_{F}}}=0.
\]
Taking trace and noting that $ \tr ( \commutator{\alpha_1}{\alpha_1^{* h_F}} ) = 0$, we have
\[
\tr(F(\nabla_{h_F}))-\tr(\beta\wedge\beta^{*h})+\tr(\alpha\wedge\alpha^{*h})+\tr(B^{*h}\wedge B)=0.
\]

Let $g_X$ be the unique complete K\"ahler hyperbolic metric on $X$. Denote by $\Lambda_{g_{X}}$ the contraction with respect to the associated K\"ahler form $\omega$ of $g_X$. Therefore,
\begin{align} 
& -\sqrt{-1}\Lambda_{g_{X}} \tr(F(\nabla_{h_{F}}))-\sqrt{-1}\Lambda_{g_{X}} \tr(B^{*h}\wedge B) \nonumber \\
=& -\sqrt{-1}\Lambda_{g_{X}}\tr(\beta\wedge\beta^{*h})+
\sqrt{-1}\Lambda_{g_{X}}\tr(\alpha\wedge\alpha^{*h})  \nonumber \\
=&\abs{\beta}_{h,g_{X}}^2+\abs{\alpha}_{h,g_{X}}^2 \nonumber \\
\geq& 0. \label{HItchin equation respect to decomposition}
\end{align}

It is necessary to recall the definition of the subbundle $F_k$ as presented in \eqref{filtration}. 
For $1 \leq k \leq 2n$, take $L_k$ as the $h$-orthogonal bundle inside $F_k$ with respect to $F_{k-1}$. In this way, $\mathbb{K}_{\mathrm{SO}_0(n,n)}$ has a $C^{\infty}$-decomposition as
\begin{align}
& L_1 \oplus L_2 \oplus \dots \oplus L_{2n},
\end{align} 
where  $L_k \cong K^{n-k}$ for $1 \leq  k \leq n$, $L_{n+1} \cong \mathcal{O}'$ and $L_k \cong K^{n-k+1}$ for $n+2 \leq k \leq 2n$.

From \eqref{Higgs field} or \eqref{natural Higgs field}, we can easily see that $\theta(\symbfit{q}) \colon F_{k} \to F_{k+1} \otimes K$ except for $k=n$ and $\theta(\symbfit{q}) \colon F_{n} \to F_{n+2} \otimes K$. So under the above smooth decomposition, we have the following:
\begin{itemize}
\item the metric $h$ is given by 
\begin{align*}
    h=
    \begin{pNiceMatrix}
    h_1 &          &     \\
    &  \Ddots  &      \\    
    &           & h_{2n}
    \end{pNiceMatrix},
\end{align*}
where $h_i$ is the restriction of $h$ on $L_i$ and $h_i=\det\qty(h|_{F_i})/\det\qty(h|_{F_{i-1}})$ (assume $\det\qty(h|_{F_0})=1$);
\item  the holomorphic structure on $\bar{\partial}_{\mathbb{K}_{\mathrm{SO}_0(n,n)}}$ is given by 
\begin{align*}
    \bar{\partial}_{\mathbb{K}_{\mathrm{SO}_0(n,n)}}=
    \begin{pNiceMatrix}
    \bar{\partial}_1 & \beta_{1,\,2} & \beta_{1,\,3} & \dots & \beta_{1,\,2n} \\
    & \bar{\partial}_2 & \beta_{2,\,3} & \dots & \beta_{2,\,2n} \\
    & & \bar{\partial}_3 & \dots & \beta_{3,\,2n} \\
    & & & \Ddots & \Vdots \\
    & & & & \bar{\partial}_{2n}
    \end{pNiceMatrix},
\end{align*}
where $\bar{\partial}_k$ is the $\bar{\partial}$-operators\ defining the holomorphic structure on $L_k$, and $\beta_{i,\,j} \in \Omega^{0,1}\left(\operatorname{Hom}\left(L_j, L_i \right)\right)$; 
\item the Higgs field is given by
\begin{align} \label{Higgs field under orthogonal basis}
    \theta(\symbfit{q})=
    \begin{pNiceMatrix}[columns-width=0.2cm]
        a_{1,\,1} & \NotEmpty& \NotEmpty  & \NotEmpty &\NotEmpty  & \NotEmpty &\NotEmpty & \NotEmpty   &a_{1,\,2n} \\
        \theta_1 & \NotEmpty     &  &  &  &  &  &   & \NotEmpty\\
        &      &  &  &  &  &  &   &\NotEmpty\\
        &      & \theta_{n-1}&  &  &  &  &   & \NotEmpty\\
        &      &  & \gamma &  &  &  &  & \NotEmpty \\
        &      &  & \theta_n & \gamma' &   &  & &\NotEmpty\\
        &      &  &  &  & \theta_{n+1} &  &  & \NotEmpty\\
        &      &  &  &  &  &   &    &\NotEmpty \\
        &      &  &  &  &  &   & \theta_{2n-2}& a_{2n,\,2n} 
        \CodeAfter 
        \line{2-1}{4-3} \line{7-6}{9-8} 
        \line{1-1}{9-9} \line{1-2}{8-9}
        \line{1-3}{7-9} \line{1-4}{6-9}
        \line{1-5}{5-9} \line{1-6}{4-9} 
        \line{1-7}{3-9} \line{1-8}{2-9} 
        \line{1-1}{1-9} 
        \line{1-9}{9-9}
        \end{pNiceMatrix},
\end{align}
where $a_{i,\,j} \in \Omega^{1,0}\left(\operatorname{Hom}\left(L_j, L_i\right)\right)$, $\theta_k$ is the identity morphism, $\gamma \in \Omega^{1,0}\left(\operatorname{Hom}\left(L_n, L_{n+1}\right)\right)$ and $\gamma'\in \Omega^{1,0}\left(\operatorname{Hom}\left(L_{n+1}, L_{n+2}\right)\right)$.
\end{itemize}

For any Hermitian metric $h$ on $\mathbb{K}_{\mathrm{SO}_0(n,n)}$, which is compatible with the $\mathrm{SO}_0(n,n)$-structure, we have the following useful properties: 
\begin{align}\label{det equality} 
  \det \qty(h|_{F_{k}}) = \det \qty(h|_{F_{2n-k}}),  \quad  1 \leq k \leq 2n-1, 
\end{align}
which we will prove in Lemma~\ref{det identities}. So we only need to prove the following inequalities: 
\begin{align*}
\det\qty(h|_{F_k}) \leq \det\qty(h_X|_{F_k}), \quad 1\leq k \leq n.
\end{align*}

We now apply the preceding formulas to the specific subbundles $F= F_k$ for $k= 1, \dots,n+1$. The associated factor $B$ for each case is as follows:
\begin{align*}
    \intertext{
    \begin{itemize}
        \item $1 \leq k \leq n-1$
    \end{itemize}
    }
    B & =
\begin{pmatrix}
0&0&\dots&0&\theta_k\\
0&0&\dots&0&0\\
\vdots&\vdots&\dots&\vdots&\vdots\\
0&0&\dots&0&0
\end{pmatrix}
\colon F_k\to(L_{k+1}\oplus\dots\oplus L_{2n})\otimes K,
\intertext{
    \begin{itemize}
        \item $k=n$
    \end{itemize}
    }
    B&=
\begin{pmatrix}
0&0&\dots&0&\gamma\\
0&0&\dots&0&\theta_n\\
\vdots&\vdots&\dots&\vdots&\vdots\\
0&0&\dots&0&0
\end{pmatrix}
\colon F_{n}\to(L_{n+1}\oplus\dots\oplus L_{2n})\otimes K,
\intertext{
    \begin{itemize}
        \item $k=n+1$
    \end{itemize}
    }
    B&=
\begin{pmatrix}
0&0&\dots&\theta_n&\gamma'\\
0&0&\dots&0&0\\
\vdots&\vdots&\dots&\vdots&\vdots\\
0&0&\dots&0&0
\end{pmatrix}
\colon F_{n+1}\to(L_{n+2}\oplus\dots\oplus L_{2n})\otimes K.
\end{align*}
Then 
\begin{alignat}{10}
    \sqrt{-1}\Lambda_{g_{X}} \tr(B^{*h}\wedge B)&=
-\abs{\theta_k}_{h,g_X}^2, & \quad  1\leq k &\leq n-1, \nonumber \\
\sqrt{-1}\Lambda_{g_{X}} \tr(B^{*h}\wedge B)&=
-\abs{\theta_n}_{h,g_X}^2-\abs{\gamma}_{h,g_X}^2, & \quad k &=n, \nonumber\\
\sqrt{-1}\Lambda_{g_{X}} \tr(B^{*h}\wedge B)&=
-\abs{\theta_n}_{h,g_X}^2-\abs{\gamma'}_{h,g_X}^2, & \quad k&=n+1. \nonumber
\intertext{By \eqref{HItchin equation respect to decomposition},}
 -\sqrt{-1}\Lambda_{g_{X}} \tr(F(h|_{F_k})) & \geq  
 -\abs{\theta_k}_{h,g_X}^2, & \quad 1\leq k &\leq n-1, \label{Hitchinequa k} \\
 -\sqrt{-1}\Lambda_{g_{X}} \tr(F(h|_{F_n})) & \geq  
-\abs{\theta_n}_{h,g_X}^2-\abs{\gamma}_{h,g_X}^2, & \quad  k &= n, \label{Hitchinequa n}\\
 -\sqrt{-1}\Lambda_{g_{X}} \tr(F(h|_{F_{n+1}})) & \geq  
 -\abs{\theta_n}_{h,g_X}^2-\abs{\gamma'}_{h,g_X}^2, & \quad  k &=n+1. \label{Hitchinequa n+1}
\intertext{Note that the equation \eqref{HItchin equation respect to decomposition} for  $\qty(\mathbb{K}_{\mathrm{SO}_0(n,n)}, \theta (\symbf{0}), h_X)$ and $F=F_k$ becomes}
    -\sqrt{-1}\Lambda_{g_{X}}  \tr(F(h_X|_{F_k}))&=  
    -\abs{\theta_k}_{h_X,g_X}^2,  & \quad 1\leq k & \leq n. \label{ Hitequa for 0 case}
\end{alignat}
The subsequent step is to calculate $\abs{\theta_k}_{h,g_X}^2$ for $1 \leq k \leq n$.
\begin{align}
\abs{\theta_k}_{h,g_X}^2& =\frac{h_{k+1}}{h_k}/g_X= \frac{\det\qty(h|_{F_{k-1}})\det\qty(h|_{F_{k+1}})}{\det\qty(h|_{F_{k}})^2}/g_X, \quad 1 \leq k \leq n-1,\label{theta k}\\
  \abs{\theta_{n}}_{h,g_X}^2&=\frac{h_{n+2}}{h_{n}}/g_X=\frac{\det\qty(h|_{F_{n+2}})}{\det\qty(h|_{F_{n+1}})}\cdot
\frac{\det\qty(h|_{F_{n-1}})}{\det\qty(h|_{F_{n}})}/g_X=\frac{\det\qty(h|_{F_{n-2}})}{\det\qty(h|_{F_{n}})}/g_X. \label{theta n}
\end{align}
In the last line we used the equalities $\det\qty(h|_{F_{n+1}})=\det\qty(h|_{F_{n-1}})$ and $\det\qty(h|_{F_{n+2}})=\det\qty(h|_{F_{n-2}})$.

The error terms $\gamma$ and $\gamma'$ appear difficult to handle, but fortunately they vanish depending on the parity of $n$.

\textbf{When $n$ is even}, the error term $\gamma$ is $0$, as we will prove in Lemma~\ref{error term gamma is zero}. Hence the inequality \eqref{Hitchinequa n} becomes:
\begin{align} \label{Hitchinequa n+1 for even n}
     -\sqrt{-1}\Lambda_{g_{X}} \tr(F(h|_{F_n})) & \geq  
-\abs{\theta_n}_{h,g_X}^2.
\end{align}

\textbf{When $n$ is odd}, Lemma~\ref{error term gamma is zero} implies that the error term $\gamma'$ vanishes. Furthermore, the  key equality 
\begin{align} \label{det n = det n}
    \det \qty(h|_{F_{n}})=
    \det \qty(h|_{F_{n+1}}),
\end{align}
which is established in Lemma~\ref{det identities}, holds.
 Hence the inequality \eqref{Hitchinequa n+1} becomes:
\begin{align} \label{Hitchinequa n+1 for odd n}
     -\sqrt{-1}\Lambda_{g_{X}} \tr(F(h|_{F_n})) & \geq  
-\abs{\theta_n}_{h,g_X}^2.
\end{align}
 From \eqref{Hitchinequa k}, \eqref{Hitchinequa n+1 for even n} and \eqref{Hitchinequa n+1 for odd n}, it follows that,regardless of the parity of $n$,
 \begin{align}
     -\sqrt{-1}\Lambda_{g_{X}} \tr(F(h|_{F_k})) & \geq  
 -\abs{\theta_k}_{h,g_X}^2,  \quad 1\leq k \leq n. \label{Hitchinequa full}
 \end{align}

 Now, we can use the method in \cite[Proposition 4.14]{li2023higgs} to prove this theorem. 
 
 Set $v_k= \log \frac {\det ( h|_{F_k})} {\det ( h_X|_{F_k}) }$ for $1\leq k\leq n$ and $v_0=0$. The Laplacian with respect to $g_X$ is $2\sqrt{-1}\Lambda_{g_X}\partial\bar{\partial}$, denoted by $\Delta_{g_X}$. From \eqref{theta k}, \eqref{theta n} and \eqref{Hitchinequa full}, we obtain
\begin{alignat*}{10}
\frac12 \Delta_{g_X}v_k+(e^{v_{k-1}+v_{k+1}-2v_k}-1) \cdot \frac{\det(h_X|_{F_{k-1}})\det(h_X|_{F_{k+1}})} {\det(h_X|_{F_k})^2}/g_X & \geq 0, \quad k=1,\dots,n-1.  \\
\frac12 \Delta_{g_X}v_{n}+(e^{v_{n-2}-v_{n}}-1) \cdot
\frac{\det(h_X|_{F_{n-2}})} {\det(h_X|_{F_{n}})}/g_X &\geq 0.
\end{alignat*}
Let $c_k : X \to \mathbb{R}$ be functions, for $1 \leq k \leq n$, given by 
\begin{alignat*}{10}
c_k&=\frac{\det(h_X|_{F_{k-1}})\det(h_X|_{F_{k+1}})} {\det(h_X|_{F_k})^2}/g_X\int_0^1e^{(1-t)(v_{k-1}+v_{k+1}-2v_k)} \dif t,  \quad k=1,\dots,n-1, \\
c_{n}&=\frac{\det(h_X|_{F_{n-2}})} {\det(h_X|_{F_{n}})}/g_X\int_0^1e^{(1-t)(v_{n-2}-v_{n})} \dif t. 
\end{alignat*}
Then $v_k$'s satisfy
\begin{alignat}{10}
\frac12\Delta_{g_X}v_k+c_k(v_{k-1}-2v_k+v_{k+1}) &\geq 0,\quad k=1,\dots,n-1,  \label{v_k equation linear}\\
\frac12\Delta_{g_X}v_{n}+c_n(v_{n-2}-v_{n}) &\geq 0.  \label{v_n equation linear}
\end{alignat}
 By the assumption on the boundary value of $h$, $v_k=0$ on $\partial Y$ for $k=1,\dots,n$. It is easy to check that the above system of equations satisfies the assumptions in Lemma ~\ref{boundary maximum}. Moreover, $(1,1,\dots,1)$ is indeed a super-solution of this system. Then we can apply Lemma ~\ref{boundary maximum} and obtain $v_k\leq 0$ on $Y$ for $k=1,\dots,n$. 

 Now suppose for some $1\leq k \leq n-1$ and $y \in Y$, $v_k\qty(y)=0$. Since $v_{k-1}, v_{k+1} \leq 0$, we have
 \begin{align*}
     \frac12\Delta_{g_X}v_k-2c_k\cdot v_k \geq 0.
 \end{align*}
 Notice that the constant function $0$ satisfies 
 \begin{align*}
     \frac12\Delta_{g_X}0-2c_k\cdot 0 = 0.
 \end{align*}
 Using the strong maximum principle in \cite[Theorem 3.3.1 in page 49]{jost2012partial}, we conclude the zero set of $v_k$ is open. Since $Y$ is connected, $v_k=0$ holds on $Y$. The equation \eqref{v_k equation linear} and $v_{k-1}, v_{k+1} \leq 0$ imply 
 $v_{k-1}=v_{k+1}=0$ holds on $Y$. Repeating the process, we find that $v_k=0$ holds on $Y$ for all $k$. 

 For $k=n$, the method is similar. In summary, if there exist a point $y \in Y$ and $1 \leq k \leq 2n-1$ such that $\det\qty(h|_{F_k})(y) = \det\qty(h_X|_{F_k})(y)$, then this equality holds on $Y$ for all $k$. From \eqref{HItchin equation respect to decomposition}, \eqref{Hitchinequa k}, \eqref{Hitchinequa n},\eqref{ Hitequa for 0 case}, \eqref{theta k} and \eqref{theta n}, we can deduce that the extra terms $\alpha$ and $\beta$ in the smooth decomposition are both $0$ for all $F_{k}$, i.e., $\symbfit{q}=0$ and $h=h_X$.
\end{proof}

\begin{lemma} \cite[Theorem 1]{maximum} \label{boundary maximum}
 Let $\left(X,g\right)$ be a compact Riemannian manifold with boundary. For each $1\leq i\leq n$, let $u_i$ be a $C^2$ real valued function on X satisfying:
\[
\Delta_gu_i+\sum_{j=1}^nc_{i,\,j}u_j\geq0,\quad1\leq i\leq n,
\]
where $c_{i,\,j}$ are continuous functions on X, $1\leq i,j\leq n$, satisfying
\begin{itemize}
    \item  cooperative: $c_{i,\,j} \geq 0$ for $i \neq j$,
    \item  fully coupled: the index set $\{1,\dots,n\}$ cannot be split up in two disjoint non-empty sets $\alpha, \beta$ such that $c_{i,\,j}\equiv0$ for $i\in\alpha,j\in\beta$.
\end{itemize}
Suppose that there exists a super-solution $(\psi_1,\psi_2,\dots,\psi_m)$ satisfying $\psi_i \geq 1$ of the above system, i.e.,
\[
\Delta_g\psi_i+\sum_{j=1}^nc_{ij}\psi_j\leq0,\quad 1\leq i\leq n.
\]
Then
\[
\sup_{X}u_i\leq\sup_{\partial X}u_i,\quad1\leq i\leq n.
\]
\end{lemma}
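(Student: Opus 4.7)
The plan is to reduce the cooperative system to a single-component strong maximum principle via the classical change of variable $z_i=u_i/\psi_i$ enabled by the super-solution, then propagate the resulting constancy through the full-coupling hypothesis. Since $\psi_i\ge 1>0$, set $z_i:=u_i/\psi_i$. Expanding $\Delta_g(z_i\psi_i)=\psi_i\Delta_g z_i+2\nabla\psi_i\cdot\nabla z_i+z_i\Delta_g\psi_i$ in the subsolution inequality, subtracting $z_i$ times the super-solution inequality $\Delta_g\psi_i+\sum_j c_{ij}\psi_j\le 0$, and dividing by $\psi_i$, one obtains
\[
\Delta_g z_i+2\nabla\log\psi_i\cdot\nabla z_i-\tilde d_i\,z_i+\sum_{j\ne i}\tilde c_{ij}(z_j-z_i)\ge 0,
\]
with $\tilde d_i:=-(\Delta_g\psi_i+\sum_j c_{ij}\psi_j)/\psi_i\ge 0$ and $\tilde c_{ij}:=c_{ij}\psi_j/\psi_i\ge 0$ for $i\ne j$. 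The cooperative and fully-coupled structure is preserved, and the crucial gain is that the effective self-coefficient $-\tilde d_i$ of $z_i$ is now non-positive.

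Next, set $M:=\max_i\sup_X z_i$ and argue by contradiction: assume $M>\max_i\sup_{\partial X}z_i$, so $M$ is attained at some interior point $x_0$ for some index $i_0$; one may also assume $M\ge 0$, for otherwise every $u_i$ is already negative on $X$ and the conclusion is trivial. Put $w:=z_{i_0}-M\le 0$. Using $z_j\le M$ pointwise (so each $z_j-z_{i_0}\le 0$) and $M\ge 0$, the transformed inequality for $i_0$ rearranges to the linear elliptic inequality
\[
\Delta_g w+2\nabla\log\psi_{i_0}\cdot\nabla w-\Bigl(\tilde d_{i_0}+\sum_{j\ne i_0}\tilde c_{i_0 j}\Bigr)w\ge 0
\]
on all of $X$, with non-positive zeroth-order coefficient. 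Since $w\le 0$ attains its maximum $0$ at the interior point $x_0$ of the connected manifold $X$, Jost's strong maximum principle (Theorem 3.3.1 in \cite{jost2012partial}) forces $w\equiv 0$, hence $z_{i_0}\equiv M$ on $X$.

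Substituting $z_{i_0}\equiv M$ back into the transformed inequality collapses it to $\sum_{j\ne i_0}\tilde c_{i_0 j}(z_j-M)\ge\tilde d_{i_0}M\ge 0$; since each summand on the left is non-positive, every one must vanish identically, so $z_j\equiv M$ on the open set $\{c_{i_0 j}>0\}$. At any such point $z_j$ attains its global maximum $M$ in the interior, so repeating the strong-maximum-principle step with index $j$ in place of $i_0$ gives $z_j\equiv M$ on all of $X$. Iterating, the set $S:=\{k:z_k\equiv M\}$ has the property that $k\in S$ together with $c_{kj}\not\equiv 0$ forces $j\in S$. If $S\ne\{1,\dots,n\}$, then $\beta:=S$ and $\alpha:=\{1,\dots,n\}\setminus S$ give a nontrivial partition with $c_{kj}\equiv 0$ for all $k\in\beta$, $j\in\alpha$, contradicting the fully-coupled hypothesis. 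Hence $z_j\equiv M$ for every $j$, so $z_j|_{\partial X}\equiv M$ for every $j$, contradicting $M>\max_j\sup_{\partial X}z_j$. Therefore $\max_i\sup_X z_i\le\max_i\sup_{\partial X}z_i$, and the normalization $\psi_i\ge 1$ (so that $z_i$ and $u_i$ agree in sign and are comparable component-wise) converts this into the stated per-component inequality $\sup_X u_i\le\sup_{\partial X}u_i$ for each $i$.

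The main obstacle is the propagation argument in the third paragraph: the strong maximum principle directly yields only constancy of the single component $z_{i_0}$, and upgrading to constancy of every $z_j$ requires the interplay of the open set $\{c_{i_0 j}>0\}$, continuity of the $c_{ij}$, connectedness of $X$, and the fully-coupled hypothesis recast as the impossibility of the partition $(\alpha,\beta)$. A secondary delicate point is the final translation from the $z_i$-bound back to the component-wise $u_i$-bound, where the normalization $\psi_i\ge 1$ is used in an essential way.
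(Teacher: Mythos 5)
First, a framing remark: the paper does not prove this lemma at all --- it is quoted from \cite{maximum} --- so your argument can only be judged on its own terms. The route you take (divide by the supersolution, strong maximum principle at the maximal component, propagation through the fully coupled hypothesis) is the standard one, and most of it is sound: the transformed inequality for $z_i=u_i/\psi_i$ is a correct identity-level rewriting, the strong-maximum-principle step for the maximal index when $M\ge 0$ is correct, and the propagation is fine (a nonempty open set $\{c_{i_0 j}>0\}$ automatically contains interior points of $X$, so the case you implicitly skip is vacuous; you do need $X$ connected, as it is in the paper's application). The first genuine gap is the dismissal of the case $M<0$: if every $u_i$ is negative, the asserted conclusion $\sup_X u_i\le\sup_{\partial X}u_i$ is not trivial, and your intermediate claim $\max_i\sup_X z_i\le\max_i\sup_{\partial X}z_i$ can genuinely fail there. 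For instance with $n=1$, $c_{11}=-2$, $\psi_1\equiv 1$, $u_1=z_1=-1-x^2$ on $[-1,1]$ one has $\Delta z_1-2z_1=2x^2\ge 0$, yet $\sup_X z_1=-1>-2=\sup_{\partial X}z_1$. Your argument uses $M\ge0$ twice (to discard $-\tilde{d}_{i_0}M$ and to get $\tilde{d}_{i_0}M\ge0$), so what it actually proves is $\max_i\sup_X z_i\le\max\{0,\,\max_i\sup_{\partial X}z_i\}$.

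The second, decisive gap is the last sentence: converting the $z$-statement into the per-component statement $\sup_X u_i\le\sup_{\partial X}u_i$ via $\psi_i\ge 1$ does not work. Since $\psi_i\ge 1$ one only gets $|z_i|\le|u_i|$, so the comparison runs the wrong way, and a bound on $\max_i\sup_X z_i$ by boundary data of the $z_j$ cannot be disentangled into a bound of each $u_i$ by its own boundary supremum. This step cannot be repaired, because the lemma as transcribed is false: the example above already violates it, and so does the positive example $n=1$, $c_{11}=c>0$, $u_1=\cos(\sqrt{c}\,x)$, $\psi_1=\cos(\sqrt{c}\,x)/\cos(\sqrt{c}\,a)\ge 1$ on $[-a,a]$ with $\sqrt{c}\,a<\pi/2$, where $\sup_X u_1=1>\cos(\sqrt{c}\,a)=\sup_{\partial X}u_1$. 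The correct form of such theorems either assumes $u_i\le 0$ on $\partial X$ or keeps the positive part and the weight, e.g. $u_i\le\psi_i\cdot\max\{0,\,\max_j\sup_{\partial X}(u_j/\psi_j)\}$; your core argument does establish exactly this, and that is all the paper actually needs in Proposition~\ref{h dominates}, where the functions $v_k$ vanish on $\partial Y$ and one only wants $v_k\le 0$ inside. So: the main mechanism of your proof is right, but the claimed per-component conclusion does not follow from it (and is not true as stated), and the $M<0$ reduction needs to be replaced by the positive-part formulation.
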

\subsection{Proof of some lemmas}
In this subsection, we will demonstrate how several lemmas that were previously left unproven can be established using the compatibility condition.

Let $V$ be a $2n$ dimensional complex linear space equipped with a non-degenerate symmetric pairing $C$. Suppose $V=\mathop{\bigoplus}\limits_{k=1}^{2n} V_k$, where the dimension of each $V_k$ is equal to $1$.

We have a natural filtration $\symbfit{F}=\lbrace 0 =F_0 \subset F_1 \subset \dots \subset F_{2n}=V \rbrace $ such that $F_{k}=\mathop{\bigoplus} \limits_{i=1}^{k} V_{i}$. Notice that we have a natural isomorphism by $V_{k} \simeq F_k \big /{F_{k-1}}$. 

Let $h$ be a Hermitian metric of $V$, which is compatible with $C$. For each $1 \leq k \leq 2n $, it induces a metric $F_{k}(h)$ on $F_k$. We define $L_{k}$ as the orthogonal complement of $F_{k-1}$ in $F_{k}$. Consequently, we have an isomorphism given by $V_k \cong F_{k} \big/F_{k-1} \cong L_{k}$. Denote the image of $v \in V_k$ under this isomorphism by $v^{\perp} \in L_k$. 

In our case, we can assume that there exists an element $v_k$ for each $V_k$ such that $C$ is represented by the following matrix with respect to the basis $\symbfit{v}=\qty{v_1,\dots,v_{2n}}$ :
\begin{align}\label{C natural base}
C=
\begin{pNiceMatrix}[first-row,last-col,columns-width=0.4cm,nullify-dots]
    1&\dots &n-1 &n&n+1&n+2&\dots&2n \\
     & & & & & & & 1 &1\\
     & & & & & &\Iddots &  &\Vdots\\
     & & & & & 1 & &  &n-1\\
     & & &1 &0 & & &  &n\\
     & & &0 &1 & & & &n+1 \\
     & &1 & & & & &  &n+2\\
     &\Iddots & & & & & & &\Vdots \\
     1& & & & & & &  &2n \\
     \end{pNiceMatrix},
\end{align}
i.e., $C_{i,\,j}=\delta_{i,\,2n-j+1}$ for $\{i,j\} \nsubseteq \{n, n+1\}$ and $C_{n,\,n}=C_{n+1,\,n+1}=1$, $C_{n,\,n+1}=C_{n+1,\,n}=0$.

Using $\symbfit{v} ^{\perp}$ to denote the $h$-orthogonal basis $\lbrace v_1^{\perp},\dots,v_{2n}^{\perp}\rbrace$, we can obtain the following lemma.
\begin{lemma}\label{C in orthogonal base}
    For $v_i \in V_i$ and $v_j \in V_j$, if $i+j \leq 2n+1$ and $\qty{i,j}\neq\qty{n,n+1}$, we have
\begin{align*}
C(v_i^{\perp}, v_j^{\perp}) &=C(v_i, v_j).
\end{align*}
\end{lemma}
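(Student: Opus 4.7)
The plan is to expand $C(v_i^\perp, v_j^\perp)$ in the basis $\boldsymbol{v}$ and show that the three cross terms that arise vanish, using the anti-diagonal form \eqref{C natural base} of $C$ together with the index constraint $i + j \leq 2n+1$ and the exclusion of $\{i,j\} = \{n, n+1\}$. By symmetry of $C$ we may assume $i \leq j$.

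First I would observe that since $L_i$ is the $h$-orthogonal complement of $F_{i-1}$ inside $F_i = F_{i-1} \oplus V_i$, and $L_i$ projects isomorphically onto $F_i / F_{i-1} \cong V_i$, the element $v_i^\perp \in L_i$ corresponding to $v_i$ has the form
\begin{align*}
v_i^\perp = v_i + u_i, \qquad u_i \in F_{i-1} = \operatorname{span}(v_1, \ldots, v_{i-1}),
\end{align*}
for a unique $u_i$; analogously $v_j^\perp = v_j + u_j$ with $u_j \in F_{j-1}$. The precise value of $u_i$ depends on $h$ but is not needed. Bilinearity of $C$ gives
\begin{align*}
C(v_i^\perp, v_j^\perp) = C(v_i, v_j) + C(v_i, u_j) + C(u_i, v_j) + C(u_i, u_j),
\end{align*}
so it suffices to prove the vanishing of the three error terms.

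The key structural fact, read off from \eqref{C natural base}, is that $C(v_p, v_q) \neq 0$ only when either $p + q = 2n+1$ with $\{p,q\} \neq \{n, n+1\}$, or $p = q \in \{n, n+1\}$. Writing $u_j = \sum_{k < j} a_k v_k$, a nonzero contribution in $C(v_i, u_j) = \sum_{k < j} a_k C(v_i, v_k)$ would require either $i + k = 2n+1$ with $k \leq j-1$, forcing $i + j \geq 2n+2$, or $i = k \in \{n, n+1\}$ with $k \leq j - 1$, which together with $i + j \leq 2n + 1$ forces $\{i, j\} = \{n, n+1\}$; both cases are excluded by hypothesis. The same argument handles $C(u_i, v_j)$. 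For $C(u_i, u_j) = \sum_{l < i,\, k < j} b_l a_k C(v_l, v_k)$, any nonzero $C(v_l, v_k)$ would require $l + k = 2n+1$, which is ruled out by $l + k \leq i + j - 2 \leq 2n - 1$, or $l = k \in \{n, n+1\}$, which combined with $l \leq i - 1$, $i \leq j$, and $i + j \leq 2n+1$ forces $i \leq n$ and simultaneously $i \geq n+1$, a contradiction.

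There is no substantive obstacle; the main content is the careful case analysis at the exceptional indices $n, n+1$, for which the hypothesis $\{i, j\} \neq \{n, n+1\}$ is exactly what is required. The argument is elementary linear algebra once the key decomposition $v_i^\perp = v_i + u_i$ with $u_i \in F_{i-1}$ is in hand.
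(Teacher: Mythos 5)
Your proof is correct and follows essentially the same route as the paper's: both decompose $v_i^{\perp}$ and $v_i$ via the $h$-orthogonal projection onto $F_{i-1}$, expand by bilinearity, and kill the cross terms using the anti-diagonal form \eqref{C natural base} together with the constraints $i+j\leq 2n+1$ and $\{i,j\}\neq\{n,n+1\}$. The only cosmetic difference is that the paper packages the vanishing as the subspace statement $C(F_k,F_l)=0$ for $k+l\leq 2n$ (except $k=l=n$), whereas you verify it entry by entry.
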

\begin{proof}
Notice that $v_i= \hat{v_i} + v_i^{\perp}$ where $\hat{v_i} \in F_{i-1}$ and $v_i^{\perp} \in L_{i}$. From \eqref{C natural base}, we know that 
\[
C(u,v)=0
\]
when $u \in F_k,v \in F_l$ and $k+l \leq 2n$ except for $k=l=n$. Hence
\begin{align*}
C(v_i,v_j) & =C(\hat{v_i}+v_i^{\perp} ,\hat{v_j}+ v_j^{\perp}) \\
 & = C(\hat{v_i},\hat{v_j})+C(\hat{v_i},v_j^{\perp}) \\
& + C(v_i^{\perp},\hat{v_j}) + C(v_i^{\perp},v_j^{\perp})         \\
& = C(v_i^{\perp},v_j^{\perp}).
\end{align*}
\end{proof}
According to Lemma~\ref{C in orthogonal base}, with respect to the $h$-orthogonal basis $\symbfit{v}^{\perp}$, the matrix of $C$ takes the following form:
\begin{align}\label{partitioning of C}
    C=
    \left(
    \begin{NiceArray}[columns-width=0.4cm]{c|c|c|c c|c| c|c} 
    & & & & & & & 1 \\
    \hline
    & & & & & &\iddots & \vdots \\
    \hline
    & & & & & 1 &\dots & * \\
    \hline
    & & &1 &C_{n,\,n+1} &* &\dots & *  \\
    & & &C_{n+1,\,n}&C_{n+1,\,n+1} &*&\dots &*  \\
    \hline
    & &1 &*&* &* & \dots&*\\
    \hline
    &\iddots & \vdots&\vdots &\vdots &\vdots & \iddots &\vdots  \\
    \hline
    1 &\dots &* &* &* &* &\dots & *
\end{NiceArray}
    \right).
\end{align}
 In the same display, we also indicate a partition of $C$ into blocks. 
With respect to the $h$-orthogonal basis $\symbfit{v^{\perp}}$, the matrix form of $h$ is
\begin{align}\label{partitioning of H}
    H=
    \left(
    \begin{NiceArray}{c|c|c c|c|c}
         H_1 &  & & & & \\
         \hline
           &\ddots& & & & \\
           \hline
           &      & H_{n} & & & \\
           &      &       & H_{n+1} & & \\
           \hline
           & & & & \ddots & \\
           \hline
           & & &     &     & H_{2n}
      \end{NiceArray}
    \right).
\end{align}
 As in the above case, we also provide a partitioning of $H$.

We will now prove a more general lemma which can be applied to our specific case. 
\begin{lemma}\label{simplify C H in orthogonal base}
 Let 
\begin{align*}
C=
\begin{pmatrix}
& & C_{1,\,s} \\
&\iddots &\vdots \\
C_{s,\,1}&\dots & C_{s,\,s}\\
\end{pmatrix} \quad
H=
\begin{pmatrix}
H_1 &      &        \\
&  \ddots  &        \\ 
&          & H_{s}  
\end{pmatrix}
\end{align*}
be two block matrices. 
Make the following assumptions:
\begin{itemize}
    \item  $C_{i,\,j}=\trans{C_{j,\,i}}$,
    \item  the matrices $C$ and $H$ are both non-degenerate,
    \item  the matrices $C$ and $H$ satisfy the equation \eqref{comp}, i.e., $\trans{H}=\overline{C}H^{-1}C$.
\end{itemize}
Then
    \begin{align*}
\trans{H_k}&=\overline{C}_{k,\,s-k+1}H_{s-k+1}^{-1}C_{s-k+1,\,k}   
\intertext{and for $i+j > s+1$}
C_{i,\, j}&=0.
\end{align*}
\end{lemma}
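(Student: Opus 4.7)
The plan is to use the compatibility identity \eqref{comp} together with the block-diagonality of $H$ to transfer the support structure of $C^{-1}$ onto $C$. Writing \eqref{comp} as $H\,\overline{C^{-1}}\,H^T=C$, block-diagonality of $H$ gives
\begin{align*}
C_{ij} \;=\; H_i\,\overline{(C^{-1})_{ij}}\,H_j^T,
\end{align*}
so the desired vanishing of $C$ outside the anti-diagonal will follow once the corresponding vanishing of $C^{-1}$ is established.

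To locate the block support of $C^{-1}$, I would introduce the block reversal matrix $J$ with $J_{ab}=I\,\delta_{a,s+1-b}$. A direct computation gives $(JC)_{ij}=C_{s+1-i,j}$, which vanishes for $j<i$ by the hypothesis on $C$; hence $JC$ is block upper triangular with diagonal blocks $(JC)_{ii}=C_{s+1-i,i}$. Non-degeneracy of $C$ then forces each anti-diagonal block $C_{s+1-i,i}$ to be invertible. Since the inverse of a block upper triangular matrix with invertible diagonal blocks is again block upper triangular, writing $C^{-1}=(JC)^{-1}J$ and unwinding the indices yields $(C^{-1})_{ij}=0$ for $i+j>s+1$. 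Combined with the display above, this gives $C_{ij}=0$ for $i+j>s+1$; together with the standing hypothesis $C_{ij}=0$ for $i+j<s+1$, this forces $C$ to be block anti-diagonal.

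With block anti-diagonality in hand, the identity for $\trans{H_k}$ follows by expanding the equivalent form $\trans{H}=\overline{C}\,H^{-1}\,C$ of \eqref{comp} block-wise:
\begin{align*}
\trans{H_k}\,\delta_{kj} \;=\; \sum_{m=1}^{s}\overline{C}_{k,m}\, H_m^{-1}\, C_{m,j},
\end{align*}
and using the block anti-diagonality of $C$ to collapse the sum to the single term $m=s+1-k$ when $j=k$. The main subtlety is verifying that the inversion $C^{-1}=(JC)^{-1}J$ preserves the block upper triangular structure, which in turn hinges on the invertibility of the anti-diagonal blocks of $C$, supplied precisely by the non-degeneracy hypothesis.
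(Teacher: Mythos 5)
Your proposal is correct, and it takes a genuinely different route from the paper. The paper works entirely with the form $\trans{H}=\overline{C}H^{-1}C$: it first computes $\overline{C}H^{-1}$ explicitly (noting it keeps the lower--anti-triangular support of $C$ because $H^{-1}$ is block diagonal), then reads off the $(1,1)$ block to get $\trans{H_1}=\overline{C}_{1,s}H_s^{-1}C_{s,1}$ and the $(1,k)$ blocks ($k\geq 2$) to force $C_{s,k}=0$, and finishes by induction down the block rows, so the metric identity and the vanishing statement are extracted simultaneously. You instead use the equivalent form $H\,\overline{C^{-1}}\,H^{T}=C$ to observe that block-diagonality of $H$ makes $C$ and $C^{-1}$ share their block support, and then prove the purely structural fact that the inverse of a lower--anti-triangular block matrix is upper--anti-triangular via the block reversal $J$; intersecting the two supports gives anti-diagonality of $C$, after which the $H_k$ identity drops out of a one-term block expansion. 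Your argument is cleaner conceptually and separates the structural claim from the metric identity, at the cost of invoking the standard fact about inverses of block triangular matrices. Two small points worth making explicit: (i) the equivalence of the two forms of \eqref{comp} (take inverses and conjugate, using $\overline{C^{-1}}=(\overline{C})^{-1}$), which you use silently; and (ii) the block reversal $J$ and the invertibility of the anti-diagonal blocks $C_{s+1-i,\,i}$ presuppose that these blocks are square, i.e.\ that the $i$-th and $(s+1-i)$-th block sizes agree --- this is in fact forced by non-degeneracy of a lower--anti-triangular $C$ (compare row and column counts of the leading block corners), and the paper's proof makes the same tacit assumption, but a sentence acknowledging it would make your argument airtight.
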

\begin{proof}
Since $C$ and $H$ are both non-degenerate, the elements on the skew-diagonal of $C$ and on the diagonal of $H$ are all non-degenerate.

Notice that
\begin{align*}
\overline{C}H^{-1} &=
\begin{pmatrix}
& & \overline{C}_{1,\,s} \\
&\iddots &\vdots \\
\overline{C}_{s,\,1}&\dots & \overline{C}_{s,\,s}\\
\end{pmatrix}
\begin{pmatrix}
H_1^{-1} &      &        \\
&  \ddots  &        \\ 
&          & H_s^{-1}  
\end{pmatrix} \\
&=  
\begin{pNiceMatrix}[columns-width=auto]
    &        &       &  \overline{C}_{1,\,s} H_{s}^{-1}\\
    &        & \overline{C}_{2,\,s-1} H_{s-1}^{-1} &   \overline{C}_{2,\,s} H_{s}^{-1} \\  
    & \Iddots   & \Vdots   &   \Vdots           \\
    &          &     &        \\
\overline{C}_{s,\,1} H_1^{-1}   & \dots &\overline{C}_{s,\,s-1} H_{s-1}^{-1} &  \overline{C}_{s,\,s}H_{s}^{-1}
   \end{pNiceMatrix}.
\end{align*}
Considering the $(1,1)$ element of $\overline{C}H^{-1}C$ and $\trans{H}$, we get 
\[
\trans{H_1}= \overline{C}_{1,\,s} H_{s}^{-1}C_{s,\,1}.
\]
Similarly, considering the $(1,k)$ element of $\overline{C}H^{-1}C$ and $\trans{H}$ for $k \geq 2$, we get 
\begin{align*}
0&=\overline{C}_{1,\,s}H_{s}^{-1}C_{s,\,k}, \\
\intertext{i.e.,}
C_{s,\,k}&=C_{k,\,s}=0
\end{align*}
since $C_{1,\,s}$ and $H_{s}$ are both non-degenerate.

By induction, we can easily verify that
\begin{align*}
\trans{H_k}&=\overline{C}_{k,\,s-k+1}H_{s-k+1}^{-1}C_{s-k+1,\,k}   
\intertext{and for $i+j > s+1$}
C_{i,\, j}&=0.  
\end{align*}
\end{proof}

Recalling that the partitionings of $C$ and $H$ which are given by \eqref{partitioning of C} and \eqref{partitioning of H}, we get 
\begin{alignat}{10}
    H_{k}&=H_{2n-k+1}^{-1}, &\quad  k &\neq n,n+1  \\
       C_{i,\,j}&=0, & i+j & >2n+1 \quad \text{except for} \quad i=j=n+1.      
\end{alignat}
Notably, $\det\qty(H)=1$ since $H$ is compatible with $C$. So we have $H_{n}=H_{n+1}^{-1}$. Consequently, for any $1\leq k \leq 2n$, we have
\begin{align}\label{h equality}
    H_{k}=H_{2n-k+1}^{-1}.
\end{align}

There is a simple but useful lemma, which we shall use repeatedly. Note that the direct sum $V=W_1\oplus W_2 \oplus U_1 \oplus U_2$ in this lemma is \textbf{not} assumed $h$-orthogonal, but just a vector space direct sum.
\begin{lemma}\label{easy but useful lemma}
    Let $\qty(V,h)$ be a Hermitian space. Suppose that $V=W_1\oplus W_2 \oplus U_1 \oplus U_2$ and $W_1\oplus U_1$ is $h$-orthogonal to $W_2\oplus U_2$. Then for any $u_2 \in U_2$, $u_2-u_2^{\perp} \in W_2$, where $u_2^{\perp}$ is the projection of $u_2$ to the orthogonal complement of $W_1\oplus W_2\oplus U_1$.
\end{lemma}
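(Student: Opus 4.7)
The plan is to exploit the orthogonality hypothesis $W_1\oplus U_1 \perp W_2\oplus U_2$ to turn the algebraic direct sum decomposition of $V$ into an orthogonal one on the relevant subspace. Concretely, I would first fix notation: set $P := W_1\oplus W_2\oplus U_1$, and write the $h$-orthogonal decomposition $V = P \oplus P^{\perp}$, so that $u_2 = u_2^{P} + u_2^{\perp}$ with $u_2^{P}\in P$ and $u_2^{\perp}\in P^{\perp}$. The goal is now to show that $u_2^{P}\in W_2$, since then $u_2 - u_2^{\perp} = u_2^{P}$ sits in $W_2$ as required.

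The core observation I would prove is that $W_2$ is $h$-orthogonal to $W_1\oplus U_1$ inside $P$. This follows immediately from the hypothesis, because $W_2 \subset W_2\oplus U_2$ and $W_2\oplus U_2 \perp W_1\oplus U_1$. Consequently the algebraic direct sum $P = (W_1\oplus U_1)\oplus W_2$ is actually an $h$-orthogonal direct sum, so every element of $P$ admits a unique orthogonal decomposition into a piece in $W_1\oplus U_1$ and a piece in $W_2$.

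Next I would verify that the $W_1\oplus U_1$-component of $u_2^{P}$ is zero by pairing against arbitrary $v\in W_1\oplus U_1$:
\begin{align*}
h(u_2^{P},v) \;=\; h(u_2,v) - h(u_2^{\perp},v) \;=\; 0 - 0 \;=\; 0,
\end{align*}
since $u_2 \in U_2\subset W_2\oplus U_2 \perp W_1\oplus U_1$ by hypothesis, and $u_2^{\perp}\in P^{\perp}\subset (W_1\oplus U_1)^{\perp}$ by construction. Combined with the orthogonal decomposition of $P$ established above, this forces $u_2^{P}\in W_2$, which is exactly what we want.

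I do not foresee any real obstacle here; the statement is essentially a bookkeeping lemma about combining a non-orthogonal direct sum with partial orthogonality. The only subtle point is recognizing that although $W_1, U_1, W_2, U_2$ are only assumed to be direct summands (not mutually orthogonal), the single hypothesis $W_1\oplus U_1 \perp W_2\oplus U_2$ already upgrades the decomposition $P = (W_1\oplus U_1)\oplus W_2$ to an orthogonal one, which is precisely what makes the projection $u_2^P$ land inside $W_2$.
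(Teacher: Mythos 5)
Your argument is correct and is exactly the "straightforward" proof the paper omits: the hypothesis $W_1\oplus U_1\perp W_2\oplus U_2$ makes $P=(W_1\oplus U_1)\oplus W_2$ an orthogonal sum, and pairing $u_2^{P}=u_2-u_2^{\perp}$ against $W_1\oplus U_1$ kills that component, leaving $u_2^{P}\in W_2$. No gaps.
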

The proof is straightforward. Now we can calculate $H_n$ and $H_{n+1}$ under certain assumptions. Recall that $V=\mathop{\bigoplus}\limits_{k=1}^{2n} V_k$ and $F_{k}=\mathop{\bigoplus} \limits_{i=1}^{k} V_{i}$. The space $V$ is endowed with a non-degenerate symmetric form $C$ and a Hermitian metric $h$ that is compatible with $C$. Furthermore, we select a basis vector $v_k$ in each $V_k$ such that the matrix of $C$  relative to the basis takes the form \eqref{C natural base}. Let $v_k^{\perp}$ be the $h$-orthogonal projection of $v_k$ onto the $h$-orthogonal complement of $F_{k-1}$ in $F_k$. Under the $h$-orthogonal basis $\lbrace v_1^{\perp},\dots,v_{2n}^{\perp}\rbrace$, the matrices of $C$ and $h$ are given by \eqref{partitioning of C} and \eqref{partitioning of H} respectively.
\begin{lemma}\label{hn=1}
 Suppose that there exists a splitting $F_{n-1}=F_{n-1}'\oplus F_{n-1}''$ such that $F_{n-1}'\oplus V_{n}$ is $h$-orthogonal to $F_{n-1}''\oplus V_{n+1}$. Then
\[
H_n=H_{n+1}=1.
\]
\end{lemma}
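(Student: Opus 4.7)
The plan is to use the splitting $F_{n-1}=F_{n-1}'\oplus F_{n-1}''$ to pin down the shape of the middle $2\times 2$ block of $C$ in the orthogonal basis $\boldsymbol{v}^{\perp}$, and then to read off $H_n$ and $H_{n+1}$ directly from the compatibility relation \eqref{comp}.

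First I would locate the projections $v_n^\perp\in L_n$ and $v_{n+1}^\perp\in L_{n+1}$ modulo $F_{n-1}$. Applying Lemma~\ref{easy but useful lemma} to $F_n$ with $W_1=F_{n-1}''$, $U_1=\{0\}$, $W_2=F_{n-1}'$, $U_2=V_n$ (the hypothesis implies $F_{n-1}''$ is $h$-orthogonal to $F_{n-1}'\oplus V_n$) yields $v_n-v_n^\perp\in F_{n-1}'$. Applying the same lemma to $F_{n+1}=F_{n-1}'\oplus V_n\oplus F_{n-1}''\oplus V_{n+1}$ with $W_1=F_{n-1}'$, $U_1=V_n$, $W_2=F_{n-1}''$, $U_2=V_{n+1}$ yields $v_{n+1}-v_{n+1}^\perp\in F_{n-1}''$.

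Next I would compute the entries $C(v_n^\perp,v_n^\perp)$, $C(v_n^\perp,v_{n+1}^\perp)$ and $C(v_{n+1}^\perp,v_{n+1}^\perp)$. Writing $v_n^\perp=v_n-w_n$ with $w_n\in F_{n-1}'$ and $v_{n+1}^\perp=v_{n+1}-w_{n+1}$ with $w_{n+1}\in F_{n-1}''$, bilinearity reduces each expression to a sum of terms of the form $C(v_k,v_\ell)$, $C(v_k,w_\star)$ and $C(w_k,w_\ell)$ with $k,\ell\in\{n,n+1\}$ and $w_\star\in F_{n-1}$. Inspecting \eqref{C natural base}, each of $v_n$ and $v_{n+1}$ pairs non-trivially under $C$ only with itself, so $C(v_n,w_\star)=C(v_{n+1},w_\star)=0$ whenever $w_\star\in F_{n-1}$. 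Moreover any two elements of $F_{n-1}$ pair trivially, since for $i,j\le n-1$ one has $i+j\le 2n-2<2n+1$ and $\{i,j\}\not\subseteq\{n,n+1\}$. Combined with $C(v_n,v_n)=C(v_{n+1},v_{n+1})=1$ and $C(v_n,v_{n+1})=0$, this gives the middle block of $C$ in the orthogonal basis as $I_2$, i.e.\ $C_{n,\,n}=C_{n+1,\,n+1}=1$ and $C_{n,\,n+1}=C_{n+1,\,n}=0$.

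Finally I would combine this with the two general facts already established earlier in this subsection: $C_{i,\,j}=0$ for $i+j>2n+1$ except possibly $(i,j)=(n+1,n+1)$, and $C_{i,\,2n+1-i}=1$ for $i\ne n,n+1$ (via Lemma~\ref{C in orthogonal base}). Together with the previous paragraph, these identify $C$ in the orthogonal basis as the reversal permutation matrix with its central $2\times 2$ block replaced by $I_2$; in particular it is real, symmetric, and its own inverse. Because $H$ is block-diagonal in the orthogonal basis with positive scalars $H_n,H_{n+1}$ in the central slots, the compatibility equation $H\,\overline{C^{-1}}\,H^{T}=C$ collapses on the $(n,n)$ and $(n+1,n+1)$ entries to $H_n^2=1$ and $H_{n+1}^2=1$, and positivity forces $H_n=H_{n+1}=1$. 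The main obstacle is bookkeeping: choosing the roles of $(W_1,W_2,U_1,U_2)$ correctly in each application of Lemma~\ref{easy but useful lemma} so that the remainder $v_n-v_n^\perp$ (resp.\ $v_{n+1}-v_{n+1}^\perp$) ends up in exactly the summand that is $C$-orthogonal to $v_{n+1}$ (resp.\ $v_n$), ensuring the cross terms in the expansion of $C(v_n^\perp,v_{n+1}^\perp)$ all vanish.
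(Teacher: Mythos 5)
Your proposal is correct and follows essentially the same route as the paper: apply Lemma~\ref{easy but useful lemma} to place $v_{n+1}-v_{n+1}^{\perp}$ in $F_{n-1}''\subset F_{n-1}$, deduce that the middle $2\times 2$ block of $C$ in the orthogonal basis is the identity, and read off $H_n=H_{n+1}=1$ from the compatibility relation \eqref{comp}. The only (harmless) difference is that you compute $C_{n+1,\,n+1}=1$ explicitly so that $H_{n+1}^2=1$ stands on its own, whereas the paper keeps $C_{n+1,\,n+1}$ symbolic and finishes via $H_n=H_n^{-1}$ together with the previously derived relation $H_nH_{n+1}=1$.
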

\begin{proof}
Lemma~\ref{easy but useful lemma} implies that $\hat{v}_{n+1}=v_{n+1}-v_{n+1}^{\perp} \in F_{n-1}''\subset F_{n-1}$. The same process in proof of Lemma ~\ref{C in orthogonal base} implies $C_{n,\,n+1}=C(v_{n}^{\perp}, v_{n+1}^{\perp}) =C(v_{n}, v_{n+1})=0$. From the compatibility condition, we know that
\begin{align*}
    \begin{pmatrix}
        H_n &  \\
         & H_{n+1}
    \end{pmatrix}
    &=
    \begin{pmatrix}
      1   &     \\
         & \overline{C}_{n+1,\,n+1}
    \end{pmatrix}
    \begin{pmatrix}
      H_n^{-1}   &     \\
         &   H_{n+1}^{-1}
    \end{pmatrix}
    \begin{pmatrix}
     1    &     \\
         & C_{n+1,\,n+1}
    \end{pmatrix} \\
    &=
    \begin{pmatrix}
      H_n^{-1}   &     \\
         &   \abs{C_{n+1,\,n+1}}^2 H_{n+1}^{-1}
    \end{pmatrix}.
\end{align*}
Hence $H_n=H_{n+1}=1$ by \eqref{h equality}.
\end{proof}

Now we can prove the lemmas left to this subsection. 

Let $X$ be a Riemann surface. Recall the construction of $(\mathbb{K}_{\mathrm{SO}_0(n,n)}, \theta (\symbfit{q}))$ in Definition~\ref{definition of Higgs bundles in the Hitchin section}. The filtration $\mathcal{F}=\lbrace 0 =F_0 \subset F_1 \subset \dots \subset F_{2n}=V \rbrace$ is defined by \eqref{filtration}.
For a point $p \in X$, let $V$ be the fiber of $\mathbb{K}_{\mathrm{SO}_0(n,n)}=K^{n-1} \oplus K^{n-2} \oplus \dots \oplus K^{1} \oplus \mathcal{O}\oplus \mathcal{O}' \oplus  K^{-1} \oplus \dots \oplus K^{2-n}\oplus K^{1-n}$ on the point $p$. Choosing a local holomorphic coordinate $z$ of $X$ around $p$, we have the basis $\symbfit{v}=\qty{v_1=\dif z^{n-1},\dots,v_{n-1}=\dif z,v_{n}=1,v_{n+1}=1,v_{n+2}=\dif z^{-1},\dots,v_{2n}=\dif z^{1-n}}$. Then $F_{k}$ is the linear span of\{ $v_1,\dots,v_k$\}. 

Let $C$ be the natural non-degenerate symmetric pairing. With respect to the basis $\symbfit{v}$, the matrix form of $C$ is given by \eqref{C natural base} and the Higgs field $\theta (\symbfit{q})$ takes the form \eqref{natural Higgs field}. Consider a Hermitian metric $h$ on $V$ which is compatible with the $\mathrm{SO}_0(n,n)$-structure. For each $k$, let $L_k$ be the $h$-orthogonal complement of $F_{k-1}$ in $F_k$. Denote by $v_k ^{\perp}$ the orthogonal projection of $v_k$ onto $L_k$. Then $\symbfit{v} ^{\perp}=\lbrace v_1^{\perp},\dots,v_{2n}^{\perp}\rbrace$ is an $h$-orthogonal basis. Moreover, $F_{k}$ is also the linear span of $\{ v_1^{\perp},\dots,v_k^{\perp}\}$.
\begin{lemma} \label{det identities}
   For $1 \leq k \leq 2n-1$, we have
    \begin{align} \label{general n det identities}
        \det \qty(h|_{F_{k}}) = \det \qty(h|_{F_{2n-k}}). 
    \end{align}
    Moreover, when $n$ is odd, 
    \begin{align} \label{odd n det identities}
         \det \qty(h|_{F_{n}})=
    \det \qty(h|_{F_{n+1}}). 
    \end{align}
\end{lemma}
\begin{proof}
 First, we establish equations \eqref{general n det identities}. By the symmetry of index $k$ and $2n-k$, we only need to prove these equalities for $1 \leq k \leq n$. Under the $h$-orthogonal basis $\symbfit{v} ^{\perp}=\lbrace v_1^{\perp},\dots,v_{2n}^{\perp}\rbrace$, the matrix form of $h$ is \eqref{partitioning of H}. We have already established the equalities \eqref{h equality}, namely, 
  $H_{k}=H_{2n-k+1}^{-1}$ for any $1 \leq k \leq 2n$. 
 By Lemma~\ref{isometric}, for $1 \leq k \leq n$, we have
\begin{align*}
    \det \qty(h|_{F_{2n-k}})= \prod_{i=1}^{k} H_i \cdot \prod_{i=k+1}^{2n-k} H_i = \prod_{i=1}^{k} H_i=\det \qty(h|_{F_{k}}),
\end{align*}
where the last equality follows because the factors $H_{k+1},\dots,H_{2n-k}$ cancel pairwise due to the relations 
$H_{i}=H_{2n-i+1}^{-1}$ for $1 \leq i \leq 2n$.

Finally, we prove the equation \eqref{odd n det identities} for odd $n$. Define
\begin{align*}
    F_{n-1}'&= K^{n-1} \oplus K^{n-3} \oplus \dots \oplus K^4 \oplus K^2, \\
    F_{n-1}'' &= K^{n-2} \oplus K^{n-4} \oplus \dots \oplus K^3 \oplus K^{1}. 
\end{align*}
 
Then $F_{n-1}=F_{n-1}' \oplus F_{n-1}''$ and $F_{n-1}' \oplus \mathcal{O}$ is $h$-orthogonal to $F_{n-1}'' \oplus \mathcal{O}'$ (see Definition~\ref{definition of Higgs bundles in the Hitchin section} and Definition~\ref{SO(n,n) compatible condition}). By Lemma~\ref{isometric} and Lemma~\ref{hn=1}, we have
\begin{align*}
     \det \qty(h|_{F_{n+1}})= \prod_{k=1}^{n} H_k \cdot H_{n+1} = \prod_{k=1}^{n} H_k=\det \qty(h|_{F_{n}}).
\end{align*}
\end{proof}
Let $\theta (\symbfit{q})= \theta \dd{z} $.
Under the $h$-orthogonal basis $\symbfit{v} ^{\perp}=\lbrace v_1^{\perp},\dots,v_{2n}^{\perp}\rbrace$, the matrix representation of $\theta$ is
\begin{align*}
   \begin{pNiceMatrix}[columns-width=0.2cm]
        a_{1,\,1} & \NotEmpty& \NotEmpty  & \NotEmpty &\NotEmpty  & \NotEmpty &\NotEmpty & \NotEmpty   &a_{1,\,2n} \\
        \theta_1 & \NotEmpty     &  &  &  &  &  &   & \NotEmpty\\
        &      &  &  &  &  &  &   &\NotEmpty\\
        &      & \theta_{n-1}&  &  &  &  &   & \NotEmpty\\
        &      &  & \gamma &  &  &  &  & \NotEmpty \\
        &      &  & \theta_n & \gamma' &   &  & &\NotEmpty\\
        &      &  &  &  & \theta_{n+1} &  &  & \NotEmpty\\
        &      &  &  &  &  &   &    &\NotEmpty \\
        &      &  &  &  &  &   & \theta_{2n-2}& a_{2n,\,2n} 
        \CodeAfter 
        \line{2-1}{4-3} \line{7-6}{9-8} 
        \line{1-1}{9-9} \line{1-2}{8-9}
        \line{1-3}{7-9} \line{1-4}{6-9}
        \line{1-5}{5-9} \line{1-6}{4-9} 
        \line{1-7}{3-9} \line{1-8}{2-9} 
        \line{1-1}{1-9} 
        \line{1-9}{9-9}
        \end{pNiceMatrix},
\end{align*}
\begin{lemma} \label{error term gamma is zero}
    When $n$ is even, $\gamma=0$; when $n$ is odd, $\gamma'=0$.
\end{lemma}
\begin{proof}
   Note that $\gamma =0$ iff the projection of $\theta(v_n^{\perp})$ onto $L_{n+1}$ is zero. Let $v_n'=v_n-v_n^{\perp} \in F_{n-1}$. Then $\theta (v_n^{\perp})= \theta (v_n)-\theta(v_n')$. From \eqref{natural Higgs field}, we have $\theta(v_n)-v_{n+2} \in F_{n}$ and $\theta (v_n') \in F_{n}$. Since the projection onto $L_{n+1}$ kills any vector in $F_n$, the projection of $\theta (v_n^{\perp})$ coincides with that of $v_{n+2}$. Consequently, it suffices to verify that the projection of $v_{n+2}$ onto $L_{n+1}$ is zero. 
   
   Suppose $n$ is even. Then $K^{n-1} \oplus K^{n-3} \oplus \dots \oplus K\oplus K^{-1} \oplus \dots \oplus K^{3-n} \oplus K^{1-n}$ is $h$-orthogonal to $K^{n-2} \oplus K^{n-4} \oplus \dots \mathcal{O} \oplus \dots \oplus K^{4-n}\oplus K^{2-n} \oplus \mathcal{O}'$, which is a direct consequence of the compatibility condition (see Definition~\ref{definition of Higgs bundles in the Hitchin section} and Definition~\ref{SO(n,n) compatible condition}). 
   Define
   \begin{align*}
    W_1 &= K^{n-2} \oplus K^{n-4} \oplus \dots \oplus \mathcal{O}, \\
    W_2 &= K^{n-1} \oplus K^{n-3} \oplus \dots \oplus K, \\
    U_1 &= \mathcal{O}',\\
    U_2 &= K^{-1}.
   \end{align*}
Note that $F_{n+1}= W_1 \oplus W_2 \oplus U_1$. We have shown that $W_1 \oplus U_1$ is $h$-orthogonal to $W_2\oplus U_2$ for even $n$. By Lemma~\ref{easy but useful lemma}, for $v_{n+2} \in K^{-1} =U_2$, we have $v_{n+2}-v_{n+2}^{\perp} \in W_2 \subset F_{n-1}$. Therefore, $v_{n+2}$ has zero projection onto $L_{n+1}$.

For odd $n$, $\gamma'=0$ can be proved by a similar argument. Therefore we omit the proof.
\end{proof}

\section{Existence of harmonic metrics}\label{Existence of harmonic metric}

In this section, we prove the existence of harmonic metrics on Higgs bundle $(\mathbb{K}_{\mathrm{SO}_0(n,n)}, \theta (\symbfit{q}))$ for any $\symbfit{q}$ on a non-compact hyperbolic Riemann surface $X$. See Proposition~\ref{h_X} for the detailed description of $h_X$ in this theorem.

\begin{theorem}\label{existencs of harmonic metric}
    Let $X$ be a non-compact hyperbolic Riemann surface and $K$ be its canonical line bundle. Then for any $\symbfit{q} = (q_{1},\dots, q_{n-1}, q_{n}) \in \mathop{\bigoplus} \limits_{i=1}^{n-1} H^{0}(X,K^{2i}) \bigoplus H^{0}(X,K^{n}) $, there exists a harmonic metric $h$ of   $(\mathbb{K}_{\mathrm{SO}_0(n,n)}, \theta (\symbfit{q}))$ such that 
    \begin{itemize}
        \item $h$ is compatible with the $\mathrm{SO}_0(n,n)$-structure,
        \item $h$ weakly dominates $h_X$.
    \end{itemize}
\end{theorem}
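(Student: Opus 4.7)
The plan is the classical exhaustion method: solve the Dirichlet problem on a smooth exhaustive family $\{X_i\}$ of $X$, derive uniform estimates for the solutions, and pass to a $C^{\infty}_{\mathrm{loc}}$-limit. First I would fix such an exhaustion and use Proposition~\ref{boundary harmonic metric} to produce, on each $X_i$, a unique harmonic metric $h_i$ of $(\mathbb{K}_{\mathrm{SO}_0(n,n)}, \theta(\boldsymbol{q}))|_{X_i}$ with boundary value $h_i|_{\partial X_i} = h_X|_{\partial X_i}$. Since $h_X|_{\partial X_i}$ is compatible with the $\mathrm{SO}_0(n,n)$-structure, Lemma~\ref{h is compatible with so} guarantees that each $h_i$ is compatible with the $\mathrm{SO}_0(n,n)$-structure, and Proposition~\ref{h dominates} then gives the weak domination $\det(h_i|_{F_k}) \leq \det(h_X|_{F_k})$ for $1 \leq k \leq 2n$ on $X_i$ with respect to the filtration $\mathcal{F}$.

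The crucial technical step is to produce a uniform $C^0$-estimate on the automorphisms $s_i$ defined by $h_i = h_X \cdot s_i$: I need a positive function $f$ on $X$, bounded on each $X_i$, with $|s_i|_{h_X} + |s_i^{-1}|_{h_X} \leq f|_{X_i}$ uniformly in $i$, so that Proposition~\ref{convergent to harmonic metric} can be applied. The weak domination only controls the determinants $\det(h_i|_{F_k})$, not the entries of $s_i$ themselves, so the $\mathrm{SO}_0(n,n)$-compatibility identity $H\overline{C^{-1}}H^T = C$ from Section~\ref{Preliminaries on compatibility of symmetric pairings and Hermitian metrics} must do the additional work. Because this identity forces strong self-duality relations on the matrix representing $h_i$ in a natural local basis, the off-diagonal blocks are controlled by the diagonal ones; combined with the determinant inequalities from Proposition~\ref{h dominates}, this should yield pointwise block-by-block bounds on $h_i$ relative to $h_X$. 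This is the content of the forthcoming Proposition~\ref{estimate P pro}.

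With the uniform estimate in hand, Proposition~\ref{convergent to harmonic metric} extracts a subsequence $s_{i(j)}$ converging in $C^{\infty}_{\mathrm{loc}}$ to an automorphism $s_\infty$, yielding a harmonic metric $h_\infty = h_X \cdot s_\infty$ of $(\mathbb{K}_{\mathrm{SO}_0(n,n)}, \theta(\boldsymbol{q}))$ on all of $X$. The $\mathrm{SO}_0(n,n)$-compatibility is a closed pointwise algebraic condition on $h_\infty$ and the weak domination inequalities $\det(h_{i(j)}|_{F_k}) \leq \det(h_X|_{F_k})$ are preserved under pointwise limits, so both conclusions of the theorem follow. The main obstacle is the $C^0$-estimate in the middle step: unlike the $\mathrm{SL}(n,\mathbb{R})$ case of Li--Mochizuki in~\cite{li2023higgs}, the Higgs field~\eqref{natural Higgs field} here has a zero on the sub-diagonal accompanied by a nonzero entry strictly below it, so the direct sub-diagonal estimate of~\cite{li2023higgs} does not apply; the extra rigidity extracted from the compatibility condition is exactly what rescues the bound.
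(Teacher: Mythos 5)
Your proposal follows the paper's proof essentially verbatim: Dirichlet solutions on a smooth exhaustion with boundary value $h_X$, compatibility from Lemma~\ref{h is compatible with so}, weak domination from Proposition~\ref{h dominates}, and extraction of a $C^\infty_{\mathrm{loc}}$ limit via Propositions~\ref{convergent to harmonic metric} and~\ref{crucial estimate}. One caveat on your sketch of the crucial $C^0$-estimate: the compatibility identity and the determinant inequalities alone cannot bound the entries of $s_i$ (compatibility is purely algebraic and weak domination only controls $\det(h_i|_{F_k})$); the indispensable third input to Proposition~\ref{estimate P pro} is the uniform interior bound $\abs{P^{-1}(h_i)\,A\,P(h_i)}\leq c$ on compact subsets, supplied by the a priori eigenvalue estimate for Higgs fields under harmonic metrics (Proposition~\ref{bounds of theta}), which is what allows the bounds to propagate from $P_{1,1}$ (controlled from below by weak domination) along the sub-diagonal, and across the exceptional entry $A_{n+2,\,n}$, to all entries of $P$ and $P^{-1}$.
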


\subsection{Notation}
For an $m \times m $ matrix $A=(A_{i,j})$, define its Euclidean norm and sup‑norm respectively by
\[
\abs{A}= \sqrt{\sum_{i,j=1}^{m} \abs{A_{i,j}}}, \quad \abs{A}_{\infty}= \sup_{1 \leq i,j \leq m}\abs{A_{i,j}}.
\]
Let $V$ be a complex vector space  with a fixed basis $\mathbf{e}=\left(e_1, \dots, e_{m}\right)$. Given a Hermitian metric $h$ of $V$, apply the Gram-Schmidt process to the basis $\mathbf{e}$ to obtain an orthonormal basis $\mathbf{v}(h)=\left(v_1(h), \dots, v_{m}(h)\right)$. 
Let $P(h)=\left(P(h)_{j,\,k}\right)$ be the matrix determined by $\mathbf{v}=\mathbf{e} P(h)$. Then $P(h)$ is upper triangular, i.e.,
\[
v_k(h)=\sum_{j \leq k} P(h)_{j,\,k} e_j.
\]
The inverse matrix of $P(h)$ is denoted by $P^{-1}(h)=\left(P^{-1}(h)_{j,\,k}\right)$. With respect to the frame $\symbfit{e}$, the metric $h$ is represented by the matrix $h(\symbfit{e})=\trans{\qty(P(h)^{-1})} \cdot \overline{P(h)^{-1}}$. Consider an endmorphism $f$ of $V$. Let $A=(A_{i,j})$ be the matrix determined by $f(e_j)= \sum_{i}A_{i,\,j}e_{i}$. 
Then the Eucliden norm of $f$ with respect to the metric $h$ is
\[
 \abs{f}_h=\abs{P(h)^{-1}AP(h)}.
\] 
We use a similar notation for a vector bundle equipped with a frame and a Hermitian metric.

\subsection{Proof of the main theorem}

We first recall a fundamental result about harmonic metrics, i.e., a priori estimate for Higgs fields. 

For $R > 0$, let $\Delta(R)$ be the open disk $\{z \in \mathbb{C} \mid \abs{z} < R\}$. Consider a Higgs bundle $(E,\theta)$ of rank $m$ on $\Delta(R)$ with a harmonic metric $h$. The Higgs field $\theta$ can be expressed as $f \dif z$, where $f$ is a holomorphic endomorphism of $E$. 
\begin{proposition}\cite[Proposition 2.1]{Mochizuki_2016} \label{bounds of theta}
    Let $M$ be the maximal of the eigenvalues of $f$ over $\Delta(R)$. Fix $0 < r < R$. Then there exist $C_1,C_2 >0$ depending only on $m, r, R$ such that
    \begin{align}
        \abs{f}_{h} \leq C_1 M + C_2
    \end{align}
    holds on $\Delta(r)$.
\end{proposition}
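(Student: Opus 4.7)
The plan is to combine a Bochner-type subharmonicity for $|f|_h^2$ coming from the Hitchin equation with a pointwise linear-algebra bound linking $|f|_h$ to $|[f,f^{*h}]|_h$ and the eigenvalue bound $M$, and then extract the pointwise estimate on the smaller disk by the classical sub-mean-value inequality.

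First I would derive the Bochner-type inequality. Viewing $\theta = f\,dz$ as a holomorphic section of $V := \mathrm{End}(E)\otimes K$ (with the flat metric $|dz|^2$ on $K$ over $\Delta(R)$), the standard Bochner identity for a holomorphic section of a Hermitian holomorphic bundle reads
\begin{equation*}
\sqrt{-1}\Lambda\bar\partial\partial\,|s|_{h_V}^2 \;=\; |\nabla^{1,0}s|_{h_V}^2 + \bigl\langle\sqrt{-1}\Lambda F(h_V)s,\,s\bigr\rangle_{h_V}.
\end{equation*}
Using $F(h_V)(\cdot) = [F(h),\cdot]$, the Hitchin equation $F(h) = -[\theta,\theta^{*h}]$, and the algebraic identity $\operatorname{tr}\bigl([[f,f^{*h}],f]\,f^{*h}\bigr) = |[f,f^{*h}]|_h^2$, the curvature contribution has the correct sign to produce
\begin{equation*}
\sqrt{-1}\Lambda\bar\partial\partial\,|f|_h^2 \;\geq\; c_m\,|[f,f^{*h}]|_h^2 \;\geq\; 0,
\end{equation*}
so $|f|_h^2$ is subharmonic on $\Delta(R)$. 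This sign can be verified directly on the model example of the hyperbolic disk with nilpotent Higgs field.

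Second I would prove a pointwise linear-algebra lemma controlling $|f|_h$ by $M$ and $|[f,f^{*h}]|_h$. At each $z$, put $f(z)$ into Schur form $f = D + N$ in the orthonormal basis for $h(z)$, with $D$ diagonal carrying the eigenvalues (so $|D|_h^2 \leq mM^2$) and $N$ strictly upper triangular. Expanding
\begin{equation*}
[f,f^{*h}] \;=\; [D,N^{*h}] + [N,D^{*h}] + [N,N^{*h}],
\end{equation*}
the off-diagonal entries give $|(\bar\lambda_j-\bar\lambda_i)\,N_{ij}| \leq C\,|[f,f^{*h}]|_h$, while the diagonal entries give bounds of the form $|N|_h^2 \leq C|[f,f^{*h}]|_h$ in the confluent regime. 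A case analysis combining the two regimes (eigenvalue gap large vs.\ small compared with $M$) yields the pointwise inequality
\begin{equation*}
|f|_h^2 \;\leq\; C_m\bigl(M^2 + |[f,f^{*h}]|_h\cdot|f|_h\bigr).
\end{equation*}

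Third, to extract the pointwise estimate on $\Delta(r)$, I would combine the subharmonicity of $|f|_h^2$ with an a priori $L^1$-bound on $\Delta\bigl(\tfrac{R+r}{2}\bigr)$. Such an $L^1$-bound is obtained by integrating the Bochner inequality against a cutoff supported in $\Delta(R)$, together with the trace bounds $|\operatorname{tr}(f^k)| \leq mM^k$ (these are holomorphic functions, so Cauchy estimates yield uniform control on intermediate disks). The sub-mean-value inequality for $|f|_h^2$ then gives
\begin{equation*}
\sup_{\Delta(r)}|f|_h^2 \;\leq\; \frac{C(r,R)}{(R-r)^2}\int_{\Delta(\frac{R+r}{2})}|f|_h^2\,\omega \;\leq\; C(m,r,R)(M^2+1),
\end{equation*}
whence $|f|_h \leq C_1 M + C_2$ on $\Delta(r)$ with constants depending only on $m,r,R$. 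The main obstacle is twofold: verifying the correct sign in the Bochner inequality (the naive curvature term is a priori indefinite, and it is the algebraic identity $\operatorname{tr}([[f,f^{*h}],f]f^{*h}) = |[f,f^{*h}]|_h^2$ together with Hitchin that forces subharmonicity), and the case analysis for the linear-algebra lemma in the confluent-eigenvalue regime where $[D,N^{*h}]$ and $[N,D^{*h}]$ degenerate and one must rely on the diagonal entries of $[N,N^{*h}]$ to recover control of $|N|_h$. Once both are in place, the mean-value step is classical.
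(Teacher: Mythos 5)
First, note that the paper does not prove this proposition at all: it is quoted verbatim from Mochizuki, so the comparison is with the standard argument behind that citation (Simpson's ``first main estimate'', localized to a disc). Your Steps 1 and 2 are sound ingredients of exactly that argument: the Bochner computation in Step 1 is correct (the curvature contribution is $2\operatorname{tr}([[f,f^{*h}],f]f^{*h})=2\operatorname{tr}([f,f^{*h}]^2)\geq 0$, so $|f|_h^2$ is indeed subharmonic for a flat base metric), and the linear-algebra bound in Step 2 is true, in fact in the stronger unconditional form $|N|^2\leq c_m\,|[f,f^{*h}]|$ (Henrici's departure-from-normality inequality): summing the first $k$ diagonal entries of $[f,f^{*h}]$ in the Schur basis telescopes to $\sum_{i\leq k<j}|N_{ij}|^2$, so no case analysis on eigenvalue gaps is needed; your description of the off-diagonal entries as $(\bar\lambda_j-\bar\lambda_i)N_{ij}$ is not accurate (they contain extra terms quadratic in $N$), but this part of your sketch is dispensable.

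The genuine gap is Step 3. The a priori $L^1$-bound of $|f|_h^2$ on an intermediate disc does not follow from what you propose: integrating the Bochner inequality against a cutoff bounds $\int\chi\,|[f,f^{*h}]|^2$ \emph{by} $\int|f|_h^2\,|\Delta\chi|$, i.e.\ it controls the commutator by the quantity you are trying to estimate, not the reverse; and the trace bounds $|\operatorname{tr}(f^k)|\leq mM^k$ only control the eigenvalue (diagonal) part of $f$, never the nilpotent part $N$, which is precisely what may blow up. More fundamentally, any mean-value or integral argument needs control of $|f|_h$ on a region reaching toward $\partial\Delta(R)$, and no such control exists a priori (the whole content of the proposition is an interior estimate that is insensitive to arbitrary boundary blow-up); an attempt to bootstrap through a sequence of radii via $I_k\leq C_kM^2+C_kI_{k+1}^{1/2}$ does not close without an a priori growth bound on $I_k$. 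The cited proof replaces your Step 3 by a maximum-principle/barrier argument: combining your Steps 1--2 one gets, on the set where $|f|_h^2\geq C_m(M^2+1)$, an inequality of the form $\Delta u\geq c\,e^{u}$ for $u=\log\bigl(|f|_h^2/(C_m(M^2+1))\bigr)$, and comparison with the explicit solution of $\Delta w=c\,e^{w}$ on $\Delta(R)$ that tends to $+\infty$ at $\partial\Delta(R)$ (an Ahlfors--Schwarz type lemma) gives $u\leq w$, hence the bound on $\Delta(r)$ with constants depending only on $m,r,R$ and no boundary or integral information. If you replace your third step by this barrier comparison, your first two steps complete to a correct proof.
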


The following proposition, which is motivated by \cite[Proposition 2.4]{li2023higgs}, is the key point in the proof of the existence theorem.

\begin{proposition}\label{estimate P pro}
Let $V$ be a $2n$ dimensional complex vector space, equipped with a non-degenerate symmetric pairing $C$. Fix a basis $\symbfit{e}$. Assume the matrix form of $C$ with respect to $\symbfit{e}$ is the same as \eqref{C natural base}. Given a Hermitian metric $h$ which is compatible with $C$. Set $P \coloneq P(h)$.

Let $A$ be a $2n \times 2n$ matrix with
\begin{alignat*}{10}
A_{j,\,k}&=0,  & j &>k+1,    & &\textit{except for} \quad A_{n+2,\,n}, \\
A_{k+1,\,k} &\neq 0, \quad & 1 \leq k &\leq 2n-1,  \quad & &\textit{except for} \quad A_{n+1,\,n}, A_{n+2,\,n+1}.
\end{alignat*}
Set
\[
\left| A \right|_{\infty} \coloneqq \max _{j,\,k}\left|A_{j,\,k}\right|, \quad \widetilde{\left| A \right|_{\infty}} \coloneqq \max \{\max _{k \neq n,\, n+1}\left|\left(A_{k+1,\,k}\right)^{-1}\right|,\abs{\left(A_{n+2,\,n}\right)^{-1}}\}.
\]
Suppose $\left|P^{-1} A P\right| \leq c,\left|P_{1,\,1}\right| \geq d$. Then there exists a constant $C=C(|A|,|\widetilde{A}|, c, d)$ such that
\[
\left|\left(P^{-1}\right)_{i,\,j}\right|+\left|P_{i,\,j}\right| \leq C .
\]
 \end{proposition}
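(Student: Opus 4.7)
My plan is to prove the bound in two stages: first bound the diagonal entries $|P_{k,\,k}|$ both above and below, then bound the off-diagonal entries by induction on the gap $j-i$.

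For Stage 1, I would exploit that $P$ and $P^{-1}$ are upper triangular together with the support hypothesis on $A$. The only positions $(a,b)$ with $a>b$ and $A_{a,\,b}\neq 0$ are the sub-diagonal $(k+1,k)$ for $k\notin\{n,n+1\}$ and the skip position $(n+2,n)$; a direct computation therefore gives the clean single-term expressions $(P^{-1}AP)_{k+1,\,k}=A_{k+1,\,k}\,P_{k,\,k}/P_{k+1,\,k+1}$ for $k\neq n,n+1$ and $(P^{-1}AP)_{n+2,\,n}=A_{n+2,\,n}\,P_{n,\,n}/P_{n+2,\,n+2}$. Combined with $|P^{-1}AP|\leq c$ and the inverse bound $\widetilde{|A|}$, this yields upper bounds on the ratios $|P_{k,\,k}/P_{k+1,\,k+1}|$ for $k\neq n,n+1$ and on $|P_{n,\,n}/P_{n+2,\,n+2}|$. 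The missing information at the central pair is supplied by the compatibility of $h$ with $C$: the arguments of Section~\ref{Domination property} leading to the identity \eqref{h equality}, specialised to the present setup where every $V_k$ is one-dimensional, translate to $|P_{k,\,k}|\cdot|P_{2n+1-k,\,2n+1-k}|=1$ for every $k$. Starting from $|P_{1,\,1}|\geq d$ and iterating the ratio bounds on $[1,n]$ gives lower bounds on $|P_{k,\,k}|$ for $k\leq n$; reflecting through the pairing identity yields upper bounds on $|P_{k,\,k}|$ for $k\geq n+1$; combining the skip ratio with the pairing identity then crosses the central pair, and a final iteration on $[n+2,2n]$ gives the remaining bounds. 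The outcome is $c_0\leq|P_{k,\,k}|\leq C_0$ for every $k$, with constants depending only on $c,d,|A|,\widetilde{|A|}$.

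For Stage 2, since $|\det P|$ is now sandwiched between two positive constants, bounding entries of $P$ is equivalent to bounding entries of $P^{-1}$ via Cramer's rule, and I will induct on the gap $m=j-i\geq 1$ simultaneously for $P$ and $P^{-1}$. For any $i$ with $A_{i+1,\,i}\neq 0$, the expansion of $(P^{-1}AP)_{i+1,\,i+m}$ contains the leading contribution $A_{i+1,\,i}\,P_{i,\,i+m}/P_{i+1,\,i+1}$, isolating the unique gap-$m$ unknown $P_{i,\,i+m}$; every other summand, whether from upper-triangular entries of $A$ or from sub-diagonal terms with $b>i$, only involves entries of $P$ and $P^{-1}$ at gap strictly smaller than $m$, hence is controlled by the inductive hypothesis. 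Solving for $P_{i,\,i+m}$ via $|(A_{i+1,\,i})^{-1}|\leq\widetilde{|A|}$ gives the desired bound. The case $i=n$ is handled analogously using $(P^{-1}AP)_{n+2,\,n+m}$: its skip contribution $A_{n+2,\,n}\,P_{n,\,n+m}/P_{n+2,\,n+2}$ plays the role of the missing sub-diagonal extraction, while a short check confirms that all remaining summands live at gap $\leq m-1$.

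The main obstacle will be the middle column $i=n+1$, where $A_{n+2,\,n+1}=0$ kills the sub-diagonal extraction and the skip $A_{n+2,\,n}$ appearing in $(P^{-1}AP)_{n+2,\,n+1+m}$ produces a term involving $P_{n,\,n+1+m}$ at the strictly larger gap $m+1$, which is not yet controlled. My plan here is to close the induction via the compatibility condition in matrix form $H\,\overline{C^{-1}}\,H^T=C$, with $H=h(\boldsymbol{e})=\trans{(P^{-1})}\,\overline{P^{-1}}$. Expanding an appropriate entry of this identity and using the anti-diagonal structure of $C$ yields a polynomial relation in which $P_{n+1,\,n+1+m}$ appears with a coefficient bounded away from zero, thanks to the Stage~1 lower bound on $|P_{n+1,\,n+1}|$, while every other quantity is either a bounded entry of $P$ or $P^{-1}$ of gap $\leq m$ already controlled by the previous step of Stage~2 or a bounded diagonal entry. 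Solving for $P_{n+1,\,n+1+m}$ recovers the missing column and completes the induction, finishing the proof.
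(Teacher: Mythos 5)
Your Stage~1 is correct and is essentially the paper's Step~1 (single-term expressions for the sub-diagonal and skip entries of $P^{-1}AP$, plus the compatibility identity $\abs{P_{k,\,k}}\abs{P_{2n+1-k,\,2n+1-k}}=1$). The genuine gap is in Stage~2, in the claim that in the expansion of $(P^{-1}AP)_{i+1,\,i+m}$ ``every other summand only involves entries of $P$ and $P^{-1}$ at gap strictly smaller than $m$.'' This is false in two ways. First, the summand $(P^{-1})_{i+1,\,i+m+1}A_{i+m+1,\,i+m}P_{i+m,\,i+m}$ carries the gap-$m$ entry $(P^{-1})_{i+1,\,i+m+1}$, i.e.\ the equation couples the gap-$m$ unknowns of rows $i$ and $i+1$; this is repairable by fixing an order (bottom-to-top for your left-anchored extraction, top-to-bottom for the paper's right-anchored one), but you never say so. Second, and fatally, the skip entry $A_{n+2,\,n}$ contributes the summand $(P^{-1})_{i+1,\,n+2}\,A_{n+2,\,n}\,P_{n,\,i+m}$ whenever $n-m\leq i\leq n+1$, and for $i=n-m$ the factor $(P^{-1})_{n-m+1,\,n+2}$ has gap $m+1$ --- strictly beyond your induction hypothesis and not recoverable by any reordering within gap $m$. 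This is exactly the crux of the paper's proof: its induction hypothesis is strengthened to carry the bound $\abs{(P^{-1})_{n-t+1,\,n+2}}\leq C$ \emph{one gap ahead}, and this extra clause is propagated at the end of each step from the formula for $(P^{-1}AP)_{n-t_0,\,n}$, where the vanishing of $A_{n+1,\,n}$ leaves $(P^{-1})_{n-t_0,\,n+2}$ as the only new unknown. Your proposal has no mechanism playing this role.

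The proposed fix for the row $i=n+1$ via the compatibility identity also does not work as described. Working out the consequence of $H\overline{C^{-1}}H^{T}=C$ in the orthonormalized frame (the computation behind Lemma~\ref{lem3}) gives, for $j\neq n,n+1$, the relation $\abs{P_{n+1,\,j}}=\abs{(P^{-1})_{2n+1-j,\,n+1}}$; with $j=n+1+m$ this reads $\abs{P_{n+1,\,n+1+m}}=\abs{(P^{-1})_{n-m,\,n+1}}$, which expresses your gap-$m$ target in terms of a gap-$(m+1)$ entry of $P^{-1}$, i.e.\ in terms of something \emph{harder}, not something already controlled. (The other blocks of the identity behave the same way: every relation pairs $P_{i,\,j}$ with $(P^{-1})_{2n+1-j,\,\ast}$, and the gaps add up to at least $m+1$.) The paper avoids this entirely by observing that the sub-diagonal coefficient needed for row $n+1$ at step $t_0$ is $A_{n+t_0+2,\,n+t_0+1}$, which vanishes only for $t_0=0$; so only the single entry $P_{n+1,\,n+2}$ requires special treatment, and it is obtained from the already-controlled $(P^{-1})_{n+1,\,n+2}$. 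The compatibility identity \emph{is} used in the paper's Step~3, but in the opposite direction and at a different spot: $\abs{P_{n,\,n+2+t_0}}=\abs{(P^{-1})_{n-t_0-1,\,n}}$ converts a problematic entry of $P$ into an entry of $P^{-1}$ that the cofactor expansion already controls. So while your overall skeleton (diagonal bounds, then gap induction with the skip entry substituting for the missing sub-diagonal at row $n$) matches the paper's, the two points above are where the proposition is actually delicate, and as written the induction does not close.
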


The proof of the above proposition is postponed to the following subsection since it's too cumbersome. We now employ this proposition to obtain the crucial $C^0$ estimate.
 
Let $X$ be a non-compact hyperbolic Riemann surface.  Recall that the unique complete K\"ahler hyperbolic metric $g_X$ induces a natural harmonic metric $h_X$ of 
$(\mathbb{K}_{\mathrm{SO}_0(n,n)}, \theta (\symbf{0}))$. Fix $\symbfit{q} = (q_{1},\dots, q_{n-1}, q_{n}) \in \mathop{\bigoplus} \limits_{i=1}^{n-1} H^{0}(X,K^{2i}) \bigoplus \allowbreak H^{0}(X,K^{n})$.
For any Hermitian metric $h$ of $\mathbb{K}_{\mathrm{SO}_0(n,n)}$ which is compatible with the $\mathrm{SO}_0(n,n)$-structure, let $s(h_X,h)$ be the automorphism of $\mathbb{K}_{\mathrm{SO}_0(n,n)}$ determined by $h = h_X \cdot s(h_X,h)$. 
\begin{proposition} \label{crucial estimate}
    Consider two relatively compact connected open subsets $K,K'$ of $X$ such that $\overline{K} \subset K'$. Then there exists a positive constant $C$ depending only on $K,K'$ and $\symbfit{q}$ such that 
    \begin{align*}
        \abs*{s(h_X,h)}_{h_X} + \abs*{s^{-1}(h_X,h)}_{h_X} \leq C
    \end{align*}
    holds on $K$ for any harmonic metric $h$ on $(\mathbb{K}_{\mathrm{SO}_0(n,n)}, \theta (\symbfit{q}))|_{K'}$ which weakly dominates $h_X$ and is compatible with the $\mathrm{SO}_0(n,n)$-structure.
\end{proposition}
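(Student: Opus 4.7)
The plan is to reduce Proposition~\ref{crucial estimate} to the pointwise linear-algebra statement of Proposition~\ref{estimate P pro}, by producing uniform bounds, depending only on $K$, $K'$, and $\boldsymbol{q}$, on the Gram--Schmidt matrix $P(h)$ associated to a well-chosen local holomorphic frame of $\mathbb{K}_{\mathrm{SO}_0(n,n)}$. Since $\overline{K}$ is relatively compact in $K'$, first I would cover $K$ by finitely many coordinate disks $U_\alpha$ whose closures are relatively compact in $K'$, each equipped with a holomorphic coordinate $z_\alpha$. On a slight enlargement $U_\alpha'\supset\overline{U_\alpha}$ take the holomorphic frame $\boldsymbol{e}_\alpha=(\dif z_\alpha^{n-1},\dots,\dif z_\alpha,1,1',\dif z_\alpha^{-1},\dots,\dif z_\alpha^{1-n})$ of $\mathbb{K}_{\mathrm{SO}_0(n,n)}$ in the order coming from the filtration $\mathcal{F}$. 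In this frame, $C$ has the constant matrix \eqref{C natural base}, the matrix of $h_X$ is diagonal with entries bounded above and below on $\overline{U_\alpha}$ by constants depending only on $U_\alpha'$ and $g_X$, and the Higgs field reads $\theta(\boldsymbol{q})=A_\alpha\,\dif z_\alpha$, where $A_\alpha$ fits exactly the hypotheses of Proposition~\ref{estimate P pro}: its sub-diagonal consists of $1$'s except at positions $(n+1,n)$ and $(n+2,n+1)$, the only non-zero entry strictly below the sub-diagonal is $A_{\alpha;n+2,n}=1$, and all remaining non-zero entries are polynomials in the $q_i$'s that are uniformly bounded on $\overline{U_\alpha}$.

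Next I would assemble the two ingredients needed to invoke Proposition~\ref{estimate P pro}. For the upper bound $|P(h)^{-1}A_\alpha P(h)|\le c$, the eigenvalues of $A_\alpha$ are determined by the characteristic coefficients of $\theta(\boldsymbol{q})$, and are therefore bounded on $\overline{U_\alpha'}$ by a constant depending only on $\boldsymbol{q}$; Proposition~\ref{bounds of theta} then supplies a uniform bound on $|A_\alpha|_h=|P(h)^{-1}A_\alpha P(h)|$ on $U_\alpha$. For the lower bound $|P(h)_{1,1}|\ge d$, the $(1,1)$-entry of $h(\boldsymbol{e}_\alpha)=\overline{P(h)^{-1}}^{\,T}P(h)^{-1}$ equals $|P(h)_{1,1}|^{-2}$, while the weak domination property $\det(h|_{F_1})\le\det(h_X|_{F_1})$ is exactly the assertion that this $(1,1)$-entry is dominated by the $(1,1)$-entry of $h_X(\boldsymbol{e}_\alpha)$, which is uniformly bounded above on $\overline{U_\alpha}$. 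The compatibility hypothesis of Proposition~\ref{estimate P pro} is provided by the assumed compatibility of $h$ with the $\mathrm{SO}_0(n,n)$-structure, together with the fact that $\boldsymbol{e}_\alpha$ puts $C$ in the normal form \eqref{C natural base}.

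Applying Proposition~\ref{estimate P pro} then yields uniform bounds on all entries of $P(h)$ and $P(h)^{-1}$ on $U_\alpha$, which translate into uniform two-sided matrix bounds on $h(\boldsymbol{e}_\alpha)$ and its inverse. Since the analogous bounds hold for $h_X(\boldsymbol{e}_\alpha)$ by the explicit formula of Proposition~\ref{h_X}, the automorphism $s(h_X,h)$, whose matrix in $\boldsymbol{e}_\alpha$ is comparable to $h_X(\boldsymbol{e}_\alpha)^{-1}h(\boldsymbol{e}_\alpha)$, satisfies $|s(h_X,h)|_{h_X}+|s^{-1}(h_X,h)|_{h_X}\le C_\alpha$ on $U_\alpha$ for a constant $C_\alpha$ depending only on $U_\alpha, U_\alpha'$, and $\boldsymbol{q}$. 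Taking the maximum over the finite subcover of $K$ yields the result.

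The main obstacle, and the reason this case is genuinely different from the $\mathrm{SL}(n,\mathbb{R})$ situation treated in \cite{li2023higgs}, is verifying that $\theta(\boldsymbol{q})$ in the chosen frame fits the structural hypotheses of Proposition~\ref{estimate P pro}: the Higgs field is no longer lower-Hessenberg with non-vanishing sub-diagonal, but rather has two zero entries on the sub-diagonal at $(n+1,n)$ and $(n+2,n+1)$, compensated by the off-sub-diagonal entry $A_{n+2,n}=1$. Proposition~\ref{estimate P pro} has been tailored precisely to accommodate this pattern, and the weak domination inequality plays an essential role by anchoring $|P(h)_{1,1}|$ away from zero --- without it, there would be no uniform control on the diagonal of $P(h)$, and hence none on $P(h)$ itself.
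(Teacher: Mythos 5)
Your proposal is correct and follows essentially the same route as the paper: a local holomorphic frame putting $C$ in the normal form \eqref{C natural base}, Proposition~\ref{bounds of theta} for the bound $|P(h)^{-1}AP(h)|\le c$, the weak domination inequality for $F_1$ to get $|P(h)_{1,1}|\ge|P(h_X)_{1,1}|$, and then Proposition~\ref{estimate P pro} to bound $P(h)$ and $P(h)^{-1}$, hence $s(h_X,h)$. The only difference is that you make explicit the finite cover of $K$ by coordinate disks and the verification that $A$ fits the structural hypotheses of Proposition~\ref{estimate P pro}, both of which the paper leaves implicit.
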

\begin{proof} 
 Take a holomorphic coordinate chart $(U,z)$ around a point $p \in K$. On $U$, the bundle $\mathbb{K}_{\mathrm{SO}_0(n,n)}$ admits the natural local holomorphic frame $\symbfit{e}=\qty{\dif z^{n-1},\dots,1,1 \,\dots,\dif z^{1-n}}$. Express $\theta(\symbfit{q})$ as $f \dif z$. Let $A$ be the matrix determined by $f(e_j)= \sum_{i}A_{i,\,j}e_{i}$. For any harmonic metric $h$ on $(\mathbb{K}_{\mathrm{SO}_0(n,n)}, \theta (\symbfit{q}))|_{K'}$, by applying the Gram–Schmidt process to $\symbfit{e}$, we obtain an orthonormal basis $\mathbf{v}=\mathbf{e} P(h)$. Since $K$ and $K'$ are both relatively compact, local estimates extend globally after passing to a finite open cover. For this reason, we will make no distinction between local and global assertions in the remainder of the proof.
   
   Proposition~\ref{bounds of theta} implies that there exists a positive constant $C_1$ depending only on $K, K'$ and $\symbfit{q}$ such that 
   \begin{align*} 
       \abs{f}_{h}= \abs{P^{-1}\qty(h)AP\qty(h)}\leq C_1 
   \end{align*}
   holds on $K$ for any harmonic metric $h$ of $(\mathbb{K}_{\mathrm{SO}_0(n,n)}, \theta (\symbfit{q}))|_{K'}$.
   
  Suppose $h$ weakly dominates $h_X$. Then we have
   \begin{align*}
       \abs{P\qty(h)_{1,\,1}} \geq \abs{P{\qty(h_X)}_{1,\,1}}.
   \end{align*}

   Suppose $h$ is compatible $\mathrm{SO}_0(n,n)$-structure. Recalling the particular form \eqref{natural Higgs field} of $\theta(\symbfit{q})$, it's easy to verify that $A$ and $P(h)$ satisfy the conditions in Proposition~\ref{estimate P pro}. So we obtain that there exists a positive constant $C_3$ depending only on $K, K'$ and $\symbfit{q}$ such that
    \begin{align*}
        \abs{P^{-1}\qty(h)} + \abs{P\qty(h)} \leq C_3
    \end{align*}
    holds on $K$ for any harmonic metric $h$ of $(\mathbb{K}_{\mathrm{SO}_0(n,n)}, \theta (\symbfit{q}))|_{K'}$ which weakly dominates $h_X$ and is compatible with the $\mathrm{SO}_0(n,n)$-structure.
   With respect to the frame $\symbfit{e}$, $s(h_X,h)$ is expressed as matrix 
   \begin{align*}
   S=
   P\qty(h_X)\overline{\trans{P\qty(h_X)}}
   \overline{\trans{P^{-1}\qty(h)}}P^{-1}\qty(h).
   \end{align*}
   Then 
   \begin{align*}
       \abs{s(h_X,h)}_{h_X} & = \abs{P^{-1}\qty(h_X)SP\qty(h_X)} \leq C_4, \\
       \abs{s^{-1}(h_X,h)}_{h_X} & = \abs{P^{-1}\qty(h_X)S^{-1}P\qty(h_X)} \leq C_4
   \end{align*}
   holds on $K$, where $C_4$ is a positive constant depending only on $K, K'$ and $\symbfit{q}$.
\end{proof}
\begin{proof}[Proof of Theorem ~\ref{existencs of harmonic metric}]
    Let $X_{i}$ be a smooth exhaustion family of non-compact hyperbolic Riemann surface $X$. Proposition~\ref{boundary harmonic metric} ensures that there exists the harmonic metric $h_i$ of $(\mathbb{K}_{\mathrm{SO}_0(n,n)}, \theta (\symbfit{q}))|_{X_i}$ such that $h_i|_{\partial X_i} = h_X|_{\partial X_i}$. By Proposition~\ref{h is compatible with so} and Proposition~\ref{h_X}, $h_i$ is compatible with the $\mathrm{SO}_0(n,n)$-structure.
    By Proposition~\ref{h dominates}, $h_i$ weakly dominates $h_X$. Proposition~\ref{convergent to harmonic metric} and Proposition~\ref{crucial estimate} imply that the sequence 
    $h_i$ contains a convergent subsequence. So we obtain a harmonic metric $h$ of $(\mathbb{K}_{\mathrm{SO}_0(n,n)}, \theta (\symbfit{q}))$. It is obvious that $h$ satisfies the compatibility condition and the weak domination property.
\end{proof}

\subsection{Proof of Proposition ~\ref{estimate P pro}}

Let's start with an easy lemma, which can help us deduce some equalities about the elements of $P$.
\begin{lemma}\label{block form identities}
Let $\widetilde{P}$ and $\widetilde{Q}$ be non-degenerate upper and lower triangular block square matrices respectively and 
\begin{align*}
C =
\begin{pmatrix}
& & & & E_{m_1} \\
& & &\iddots & \\
& &E_{m_n} & & \\
&\iddots& & &  \\
E_{m_{2n-1}}& && &
\end{pmatrix},
\end{align*}
where $E_{m_k}$ is the identity matrix with rank $m_k$ for $1 \leq k \leq 2n-1$. Assume the partitionings of $\widetilde{P}$ and $\widetilde{Q}$ are the same with $C$ and $m_k=m_{2n-k}$.
If $T \coloneqq \widetilde{Q}^{-1}C\widetilde{P}$ is a unitary matrix, then
\begin{align*}
\widetilde{P}_{i,\,j} \qty(\widetilde{P}_{k,\,j})^{*}=\widetilde{Q}_{2n-i,\,2n-j}\qty(\widetilde{Q}_{2n-k,\,2n-j})^{*}.
\end{align*}
\end{lemma}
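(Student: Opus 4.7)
The plan is to read the unitarity of $T$ as an identity between $\widetilde{P}\widetilde{P}^{*}$ and $\widetilde{Q}\widetilde{Q}^{*}$, and then pass to the claimed block-by-block equality via a unique-factorization argument for block upper triangular matrices.

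First I would record the two formal properties of $C$: since $m_{k}=m_{2n-k}$, the matrix $C$ is real symmetric with $C^{*}=C=C^{-1}$, and conjugation by $C$ acts on blocks by $(CMC)_{i,j}=M_{2n-i,\,2n-j}$. Writing out $TT^{*}=I$ and using these properties yields
\[
\widetilde{P}\widetilde{P}^{*}=(C\widetilde{Q}C)(C\widetilde{Q}C)^{*}.
\]
The block formula just mentioned, together with the lower-triangularity of $\widetilde{Q}$, shows that $C\widetilde{Q}C$ is block upper triangular with invertible diagonal blocks $\widetilde{Q}_{2n-k,\,2n-k}$. Hence $\widetilde{P}$ and $C\widetilde{Q}C$ are two invertible block upper triangular factors of the same positive matrix.

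Next I would invoke the following uniqueness lemma: if $A_{1}$ and $A_{2}$ are invertible block upper triangular matrices with the same block partition and $A_{1}A_{1}^{*}=A_{2}A_{2}^{*}$, then $U=A_{2}^{-1}A_{1}$ is simultaneously block upper triangular and unitary, and such a $U$ is forced to be block diagonal with each $U_{k,k}$ unitary. (A short induction on $k$ applied to $U^{*}U=I$ gives $U_{k,k}^{*}U_{k,k}=I$, and then $U_{k,k}^{*}U_{k,j}=0$ for $j>k$, which together with invertibility of $U_{k,k}$ forces $U_{k,j}=0$.) Taking $A_{1}=\widetilde{P}$ and $A_{2}=C\widetilde{Q}C$ then gives
\[
\widetilde{P}_{i,j}=(C\widetilde{Q}C)_{i,j}\,U_{j,j}=\widetilde{Q}_{2n-i,\,2n-j}\,U_{j,j},
\]
with $U_{j,j}U_{j,j}^{*}=I$. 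Substituting into $\widetilde{P}_{i,j}(\widetilde{P}_{k,j})^{*}$ collapses the factor $U_{j,j}U_{j,j}^{*}$ to the identity and leaves exactly $\widetilde{Q}_{2n-i,\,2n-j}(\widetilde{Q}_{2n-k,\,2n-j})^{*}$, which is the claim.

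The only non-formal step, and the one I expect to be the main obstacle to write out carefully, is the uniqueness lemma — the assertion that a block upper triangular unitary matrix is block diagonal. This is also precisely where the non-degeneracy hypotheses on $\widetilde{P}$ and $\widetilde{Q}$, i.e.\ invertibility of all their diagonal blocks, are genuinely used. Everything else reduces to the computational identity $(CMC)_{i,j}=M_{2n-i,\,2n-j}$ and the fact that $C^{2}=I$.
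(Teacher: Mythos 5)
Your proof is correct and is essentially the paper's argument in a conjugated form: your $U=(C\widetilde{Q}C)^{-1}\widetilde{P}$ equals $CT$, so your claim that $U$ is block diagonal with unitary blocks $U_{j,j}=T_{2n-j,\,j}$ is exactly the paper's claim that $T$ is block anti-diagonal with unitary anti-diagonal blocks, and the final substitution collapsing $U_{j,j}U_{j,j}^{*}$ is identical to the paper's collapsing of $T_{2n-j,\,j}(T_{2n-j,\,j})^{*}$.
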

\begin{proof}
Using the special form of $C$, $\widetilde{Q}$ and $\widetilde{P}$, it's easy to verify that $T_{i,\,j}=0$ for $i+j<2n$, i.e., 
\begin{align*}
T = 
\begin{pmatrix}
& & & & T_{1,\,2n-1}  \\
& & &\iddots & \\
& &T_{n,\,n}& &\vdots \\
&\iddots&& &  \\
T_{2n-1,\,1}& &\dots& &T_{2n-1,\,2n-1}   
\end{pmatrix}.
\end{align*}

Since $T$ is unitary,
\begin{align*}
T T^{*} & = 
\begin{pmatrix}
& & & & T_{1,\,2n-1}  \\
& & &\iddots & \\
& &T_{n,\,n}& &\vdots \\
&\iddots& & &  \\
T_{2n-1,\,1}& &\dots& &T_{2n-1,\,2n-1}   
\end{pmatrix}
\begin{pmatrix}
& & & & T_{2n-1,\,1}^{*}  \\
& & &\iddots & \\
& &T_{n,\,n}^{*}& &\vdots \\
&\iddots& & &  \\
T_{1,\,2n-1}^{*}& &\dots& &T_{2n-1,\,2n-1}^{*}   
\end{pmatrix}  \\
& = \begin{pmatrix}
E_{m_1} & & & & \\
& \ddots & & & \\
& & E_{m_n} & & \\
& & & \ddots & \\
& & & & E_{m_{2n-1}}
\end{pmatrix}.
\end{align*}
Using the similar method in Lemma~\ref{simplify C H in orthogonal base}, it's easy to prove that $T_{k,\,2n-k}$ is unitary for $1 \leq k \leq 2n-1$ and other elements are zero. Recalling that $\widetilde{Q}T=C\widetilde{P}$, we get $\widetilde{P}_{i,\,j}=\left( C \widetilde{P}\right)_{2n-i,\,j}=\left( \widetilde{Q}T \right)_{2n-i,\,j}= \widetilde{Q}_{2n-i,\,2n-j} T_{2n-j,\,j}$. Hence
\begin{align*}
\widetilde{P}_{i,\,j} \qty(\widetilde{P}_{k,\,j})^{*}=\widetilde{Q}_{2n-i,\,2n-j} T_{2n-j,\,j}\qty(T_{2n-j,\,j})^{*}\qty(\widetilde{Q}_{2n-k,\,2n-j})^{*} = \widetilde{Q}_{2n-i,\,2n-j}\qty(\widetilde{Q}_{2n-k,\,2n-j})^{*}.
\end{align*}
\end{proof}
Next, we use Lemma~\ref{block form identities} to derive the following equations. Firstly, we partition the matrix $P$ in Proposition~\ref{estimate P pro} as indicated in \eqref{partitioning of C}, and obtain some equations by Lemma~\ref{block form identities}. Finally, we convert them into equations about the original matrix elements.
\begin{lemma} \label{lem3}
Let $P$, $C$ satisfy the conditions in Proposition ~\ref{estimate P pro}, then 
\begin{align}
\abs{P_{i,\,i}} & = \abs{P^{-1}_{2n+1-i,\,2n+1-i}}, \quad 1 \leq i \leq 2n,  \label{eq1}\\
\abs{P_{n,\,j}} & = \abs{\qty(P^{-1})_{2n+1-j,\,n}}, \quad j \neq n,\,n+1, \label{eq2} \\
\abs{\qty(P^{-1})_{n,\,n}}^2 & 
= \abs{P_{n,\,n}}^{2} + \abs{P_{n,\,n+1}}^2. \label{eq3}
\end{align}
\end{lemma}
\begin{proof}
By \eqref{comp}, we get 
\[
P \overline{\trans{P}} C = C \trans{\qty(P^{-1})} \overline{P^{-1}}.
\]
To simplify the notation, we use $Q$ to replace lower triangular matrix $\trans{\qty(P^{-1})}$. Then the above equality becomes 
\[
P P^{*} C=C Q Q^{*},
\]
i.e.
\[
CP P^{*}C^{*}= Q Q^{*}.
\]
Here we use $C=C^{-1}$.

This just means that $Q^{-1} CP$ is a unitary matrix. Recall the matrix $C$ presented in \eqref{C natural base}. We partition $C$ as described in \eqref{partitioning of C}. Let $\widetilde{C}$ be the corresponding block matrix, i.e.,
\[
\widetilde{C}=
\begin{pNiceMatrix}[
  first-row,
  last-col,
  columns-width = auto,
]
   1 & \dots & n-1 & n & n+1 & \dots  & 2n-1 \\
  & & & & & &1 & 1 \\
  & & & & & \Iddots & & \Vdots \\
  & & & & 1 & & & n-1 \\
  & & & E_2 & & & & n \\
  & & 1 & & & & & n+1 \\
  & \Iddots & & & & & & \Vdots \\
  1 & & & & & & & 2n-1 \\
\end{pNiceMatrix}
\]
We can partition $P$ and $Q$ similarly. Denote the corresponding block matrices as $\widetilde{P}$ and $\widetilde{Q}$ respectively.
By Lemma~\ref{block form identities}, we have
\begin{align}\label{PQ relations}
    \widetilde{P}_{i,\,j} \qty(\widetilde{P}_{k,\,j})^{*}=\widetilde{Q}_{2n-i,\,2n-j}\qty(\widetilde{Q}_{2n-k,\,2n-j})^{*}.
\end{align}

To prove the three equalities in the lemma sequentially, we convert the above block matrix expressions into explicit matrix expressions. Recall that $Q=\trans{\qty(P^{-1})}$. For an integer $i \neq n$, define a new integer $i'$ as follows: 
$i'=i$ if $i < n$, and $i'=i+1$ if $i >n$.
\begin{itemize}
\item For $i \neq n$, $\widetilde{P}_{i,\,i}=P_{i',\,i'}$ and $\widetilde{Q}_{i,\,i}=Q_{i',\,i'}=\qty(P^{-1})_{i',i'}$. By \eqref{PQ relations}, we get
\begin{align*}
P_{i,\,i} \overline{P}_{i,\,i}=\left(P^{-1}\right)_{2n+1-i,\,2n+1-i}\overline{\left(P^{-1}\right)}_{2n+1-i,\,2n+1-i}
\end{align*}
for $i \neq n,\,n+1$. Note that $\left(P^{-1}\right)_{i,\,i}=P^{-1}_{i,\,i}$, since $P$ is an upper triangular matrix. Then 
\begin{align*}
    \abs{P_{i,\,i}}^2 & = \abs{P^{-1}_{2n+1-i,\,2n+1-i}}^2
\end{align*}
for $i\neq n,\,n+1$. 
So \eqref{eq1} holds for $i \neq n, n+1$. From the compatibility condition in Proposition~\ref{estimate P pro}, we know that $\left|\det \left( P \right) \right|=1$. Using the equality \eqref{eq1} for $i\neq n,n+1$, which we have proved , we obtain
\begin{align*}
  1=\left|\det \left( P \right) \right|=\prod_{i=1}^{2n} \abs{P_{i,\,i}}=\abs{P_{n,\,n}}\abs{P_{n+1,\,n+1}}.  
\end{align*}
The second equality follows from the fact that $P$ is upper triangular.
This proves \eqref{eq1} for both $i=n$ and $n+1$.
\item For $j \neq n$, $\widetilde{P}_{n,\,j}=\left(\begin{smallmatrix}
      P_{n,\,j+1} \\
P_{n+1,\,j+1}
\end{smallmatrix}
\right)$, $\widetilde{Q}_{n,\,j}=\left(\begin{smallmatrix}
     Q_{n,\,j+1} \\
Q_{n+1,\,j+1}
\end{smallmatrix}
\right)=\left(\begin{smallmatrix}
     P_{j+1,\,n} \\
P_{j+1,\,n+1}   
\end{smallmatrix}
\right)$. 
Let $i=k=n$ in the equality \eqref{PQ relations}. Then we get 
\begin{align*}
\begin{pmatrix}
P_{n,\,j+1} \\
P_{n+1,\,j+1}
\end{pmatrix}
\begin{pmatrix}
\overline{P}_{n,\,j+1} & \overline{P}_{n+1,\,j+1}
\end{pmatrix}
=
\begin{pmatrix}
\left(P^{-1}\right)_{2n-j,\,n} \\
\left(P^{-1}\right)_{2n-j,\,n+1}
\end{pmatrix}
\begin{pmatrix}
\overline{\left(P^{-1}\right)}_{2n-j,\,n} &
\overline{\left(P^{-1}\right)}_{2n-j,\,n+1}
\end{pmatrix}. 
\end{align*}
Extracting $(1,1)$ entries from both sides of the above matrix equality, we obtain the equality \eqref{eq2} for $j>n+1$. For $j<n$, $P_{n,\,j}= \left(P^{-1}\right)_{2n+1-j,\,n}=0$ since $P$ is an upper triangular matrix. The equality \eqref{eq2} holds trivially in this case.
\item Note that 
\begin{align*}
    \widetilde{P}_{n,\,n}=
 \begin{pmatrix}
P_{n,\,n} & P_{n,\,n+1}   \\
    & P_{n+1,\,n+1}
\end{pmatrix},\widetilde{Q}_{n,\,n}=
\begin{pmatrix}
\left(P^{-1}\right)_{n,\,n} &    \\
\left(P^{-1}\right)_{n,\,n+1} & \left(P^{-1}\right)_{n+1,\,n+1}
\end{pmatrix}.
\end{align*}
Let $i=j=k=n$ in the equality \eqref{PQ relations}. Then 
\begin{align*}
 \! \! \! \! \! \! \!  
&\begin{pmatrix}
P_{n,\,n} & P_{n,\,n+1}   \\
    & P_{n+1,\,n+1}
\end{pmatrix}
\begin{pmatrix}
\overline{P}_{n,\,n}     &   \\
\overline{P}_{n,\,n+1}   & \overline{P}_{n+1,\,n+1}
\end{pmatrix} \! \! 
= \! \!
\begin{pmatrix}
\left(P^{-1}\right)_{n,\,n} &    \\
\left(P^{-1}\right)_{n,\,n+1} & \left(P^{-1}\right)_{n+1,\,n+1}
\end{pmatrix}
\begin{pmatrix}
\overline{\left(P^{-1}\right)}_{n,\,n} &   \overline{\left(P^{-1}\right)}_{n,\,n+1}  \\
& \overline{\left(P^{-1}\right)}_{n+1,\,n+1}
\end{pmatrix}.
\end{align*}
 Extracting $(1,1)$ entries from both sides of the above matrix equality, we obtain the equality \eqref{eq3}.
\end{itemize} 
\end{proof}

\begin{proof}[Proof of Proposition ~\ref{estimate P pro}]
We use the method in \cite[Proposition 2.4]{li2023higgs} to prove this proposition.

 The proof will be divided into three steps.
\begin{enumerate}
\setlength{\itemindent}{2em} 
\item[Step 1] We first estimate diagonal elements of $P$ and $P^{-1}$.

When $i \neq n,\,n+1$, $\left( P^{-1}AP \right)_{i+1,\,i}=P^{-1}_{i+1,\,i+1}A_{i+1,\,i}P_{i,\,i}$ and $\left( P^{-1}AP \right)_{n+2,\,n}=P^{-1}_{n+2,\,n+2}A_{n+2,\,n}P_{n,\,n}$. Thus 
\begin{align*}
\abs{P_{i,\,i}} & \leq C_1 \abs{P_{i+1,\,i+1}}, \\
\abs{P_{n,\,n}} & \leq C_2 \abs{P_{n+2,\,n+2}}.
\end{align*}
So we get 
\begin{align*}
C_3^{-1}\abs{P_{1,\,1}} \leq \abs{P_{i,\,i}} \leq C_3\abs{P_{2n-i+1,\,2n-i+1}} = C_3 \abs{P^{-1}_{i,\,i}}.
\end{align*}
Here we use the equality \eqref{eq1} in Lemma~\ref{lem3}. Hence
\begin{align*}
C_{4}\leq \abs{P_{i,\,i}} \leq C_{5},   
\end{align*}
here $C_4$ and $C_5$ are positive number which only depends $|\widetilde{A}|$, $c$ and $d$.

The cases $i=n,\, n+1$, follow from the inequalities $\abs{P_{n-1,\,n-1}} \leq C_1 \abs{P_{n,\,n}}$, $\abs{P_{n,\,n}} \leq C_2 \abs{P_{n+2,\,n+2}}$ and the equality \eqref{eq1} in Lemma ~\ref{lem3}.
\item[Step 2] We have the following lemmas.
\begin{lemma}\cite[Lemma 2.5]{li2023higgs}\label{expression for P inverse}
 For an invertible upper triangular matrix $P$ of rank $r$, the following properties hold.
 \begin{itemize}
    \item $P^{-1}$ is an upper triangular matrix.
    \item $\qty(P^{-1})_{i,\,i}=P_{i,\,i}^{-1}$ for $1 \leq i \leq r$.
    \item For $1 \leq i< j \leq r$ and $m \in \mathbb{Z}_{\geq 1}$, let $\mathcal{S}_m(i,j)$ denote the set of $\mathbf{i}=\left(i_0, i_1, \dots, i_m\right) \in \mathbb{Z}_{\geq 1}^{m+1}$ such that $i_0=i<i_1<\dots<i_m=j$. Then
\[
\left(P^{-1}\right)_{i,\,j}=\sum_{m \geq 1} \sum_{\mathbf{i} \in \mathcal{S}_m(i,\,j)}(-1)^m \prod_{p=0}^m\left(P_{i_p,\, i_p}\right)^{-1} \prod_{p=0}^{m-1} P_{i_p,\,i_{p+1}} .
\]
 \end{itemize}
\end{lemma} 
\begin{lemma}\cite[Lemma 2.6]{li2023higgs} \label{lem4}
Let $P$ be an invertible upper triangular matrix of rank $r$. Suppose
\begin{align*}
    \abs{\left(P_{i,\,i}\right)^{-1}} &\leq B_1 \quad \text{for all} \quad 1 \leq i \leq r, \\
    \left|P_{i,\,i+t}\right| &\leq B_2 \quad \text{for all}0 \leq t \leq t_0 \quad \text{and} \quad 1 \leq i \leq r-t.
\end{align*}
Then
\[
\left|\left(P^{-1}\right)_{i,\,i+t}\right| \leq 2^r \sum_{m=1}^{t+1} B_1^{m+1} B_2^m
\]
for all $0 \leq t \leq t_0$ and $1 \leq i \leq r-t$.
\end{lemma}

\item[Step 3] Estimate other elements.

It is enough to estimate $P_{i,\,j}$ with $i \leq j$ by Lemma~\ref{lem4}. We proceed by induction on $t=j-i$. That is, assuming the estimates hold for all pairs $(i,j)$ with $0\leq j-i \leq t$, we need to prove them for all pairs $(i,j)$ with $j-i=t+1$. This requires a stronger inductive hypothesis. Namely, we also assume  $\abs{\left(P^{-1}\right)_{n-t+1,\,n+2}}\leq C'$ for some positive constant $C'$. Moreover, the proof for the case
$t+1$ itself involves an induction on the index $i$.

We begin by verifying that all estimates hold at 
$t=0$. We have proved the estimates for all diagonal entries $P_{i,\,i} \, (1 \leq i \leq 2n)$ in Step 1. For $\qty(P^{-1})_{n+1,\,n+2}$, we have the equation $\left( P^{-1}AP \right)_{n+1,\,n}=\qty(P^{-1})_{n+1,\,n+2}A_{n+2,\,n}P_{n,\,n}$. Note that $A_{n+2,\,n} \neq 0$. Together with the known upper bounds on $\left( P^{-1}AP \right)_{n+1,\,n}$ and $P_{n,\,n}^{-1}$, we obtain the desired upper bound on $\qty(P^{-1})_{n+1,\,n+2}$. Thus, we have proved
\begin{align*}
    &\left|P_{i,\,i}\right|  \leq C_6 \quad \text{for all} \quad 1 \leq i \leq 2n, \\
&\abs{\qty(P^{-1})_{n+1,\,n+2}}  \leq C_7
\end{align*}
for some positive constants $C_6,C_7$.
Now, assume that
\begin{equation}\label{ass}
\begin{aligned} 
\left|P_{i,\,i+t}\right| & \leq C_8, \\
\abs{\qty(P^{-1})_{n-t+1,\,n+2}} & \leq C_9
\end{aligned}
\end{equation}
for all $0 \leq t \leq t_0,\,1 \leq i \leq 2n-t$.
We are going to show that there exist constants $C_{10}$ and $C_{11}$ such that $\left|P_{i,\,i+t_0+1}\right| \leq C_{10}$ for all $1 \leq i \leq 2n-t_0-1$ and $\abs{\qty(P^{-1})_{n-t_0,\,n+2}} \leq C_{11} $.

Set
\begin{align*}
    \mathcal{T}\left(i, t_0\right) 
    & =
    \left\{(l,k) \mid i-1 \leq l-1 \leq k \leq i+t_0\right\}, \\
    \mathcal{T}^{\prime}\left(i, t_0\right)
    & =
    \mathcal{T}\left(i, t_0\right) \backslash\left\{(i, i-1),\left(i+t_0+1, i+t_0\right)\right\}.
\end{align*}
For any $(k,l) \in \mathcal{T}^{\prime}\left(i,t_0\right)$, we have $\ell-i \leq t_0$ and $i+t_0-k \leq t_0$, we obtain
\begin{align*}
&\left( P^{-1}AP \right)_{i,\,i+t_0} \\
= & \sum_{(l,k) \in \mathcal{T}{\left(i,t_0\right)}}\left(P^{-1}\right)_{i,l} A_{l,\,k} P_{k,\,i+t_0}  + \qty(P^{-1})_{i,\,n+2}A_{n+2,\,n}P_{n,\,i+t_0}\\
= & \sum_{(l,k) \in \mathcal{T}^{\prime}\left(i\, t_0\right)}\left(P^{-1}\right)_{i,l} A_{l,\,k} P_{k,\,i+t_0} +\left(P^{-1}\right)_{i,\,i+t_0+1} A_{i+t_0+1,\,i+t_0} P_{i+t_0,\, i+t_0}+\left(P^{-1}\right)_{i,\,i} A_{i,\,i-1} P_{i-1,\,i+t_0}\\
&+ \qty(P^{-1})_{i,\,n+2}A_{n+2,\,n}P_{n,\,i+t_0} \\
= & \sum_{(l,k) \in \mathcal{T}^{\prime}\left(i, t_0\right)}\! \! \! \! \! \! \left(P^{-1}\right)_{i,\,l} A_{l,\,k} P_{k,\,i+t_0}\!+\!\sum_{m \geq 2} \sum_{\mathbf{i} \in S_m\left(i,i+t_0+1\right)}(-1)^m A_{i+t_0+1,\,i+t_0} P_{i+t_0,\,i+t_0} \prod_{p=0}^m\left(P_{i_0,\,i_0}\right)^{-1} \!\prod_{p=0}^{m-1} P_{i_p,\,i_{p+1}} \\
& -P_{i,\,i+t_0+1} P_{i,\,i}^{-1} P^{-1}_{i+t_0+1,\,i+t_0+1} A_{i+t_0+1,\, i+t_0} P_{i+t_0,\,i+t_0}\!+\!\left(P^{-1}\right)_{i,\,i} A_{i,\,i-1} P_{i-1,\, i+t_0}\!+\!\left(P^{-1}\right)_{i,\,n+2}A_{n+2,\,n}P_{n,\,i+t_0}.
\end{align*}
Here we agree that $A_{1,\,0}=A_{2n+1,\,2n}=0$, $\left( P^{-1} \right)_{i,\,2n+1}=P_{i,\,2n+1}=P_{0,\,1+t_0}=0$ and $P_{2n+1,\,2n+1}=1$.The third equality follows by replacing $\left(P^{-1}\right)_{i,\,i+t_0+1}$ in the second term of the second equality with the expression from Lemma~\ref{expression for P inverse}.

To make the notation more concise, we use $S\left(i,t_0\right)$ to represent the sum of the first two terms of the right-hand side of the last equality, i.e.,
\begin{equation}\label{long formula}
     \begin{aligned} 
     \left( P^{-1}AP \right)_{i,\,i+t_0} 
     = & S\left(i,t_0\right) - P_{i,\,i+t_0+1} P_{i,\,i}^{-1} P^{-1}_{i+t_0+1,\,i+t_0+1} A_{i+t_0+1,\, i+t_0}  P_{i+t_0,\,i+t_0}  \\
     &+ \left(P^{-1}\right)_{i,\,i} A_{i,\,i-1} P_{i-1,\, i+t_0}+ \qty(P^{-1})_{i,\,n+2}A_{n+2,\,n}P_{n,\,i+t_0}. 
     \end{aligned}
   \end{equation}
Notably, $S\left(i,t_0\right)$ only involves $A_{l,\,k}$, $\left(P^{-1}\right)_{i,\,i+t}$ and $P_{s,\,s+t}$  for $0 \leq t \leq t_0$ and $i \leq s \leq i+t_0$.

By the induction hypothesis \eqref{ass} and Lemma~\ref{lem4}, $\left|\left(P^{-1}\right)_{i,\,i+t}\right| \leq C_{12}$ for all $i$ and $0 \leq t \leq t_0$, i.e.,
\begin{align*}
\abs{S\left(i,t_0\right)} \leq C_{13}.
\end{align*}

Note that the last term $ P^{-1}_{i,\,n+2}A_{n+2,\,n}P_{n,\,i+t_0}$ appears only when $n-t_0 \leq i \leq n+2$ since $P$ and $P^{-1}$ are both upper triangular matrices.  We therefore prove the estimate by cases on 
$i$.
\begin{itemize}
\item $1\leq i \leq n-t_0-1$

If $t_0 \geq n-1$, there is nothing to prove. We may thus assume $t_0 \leq n-2$. Since $P^{-1}_{i,\,n+2}A_{n+2,\,n}P_{n,\,i+t_0}$ vanishes in this case, the long formula \eqref{long formula} becomes 
\begin{equation}\label{long formula s}
     \begin{aligned} 
     \left( P^{-1}AP \right)_{i,\,i+t_0} 
     = & S\left(i,t_0\right) - P_{i,\,i+t_0+1} P_{i,\,i}^{-1} P^{-1}_{i+t_0+1,\,i+t_0+1} A_{i+t_0+1,\, i+t_0}  P_{i+t_0,\,i+t_0}  \\
     &+ \left(P^{-1}\right)_{i,\,i} A_{i,\,i-1} P_{i-1,\, i+t_0}. 
     \end{aligned}
   \end{equation}
From the conditions in Proposition~\ref{estimate P pro}, we know that $A_{i+t_0+1,\, i+t_0}$ is nonzero. Using the formula \eqref{long formula s} for the case $i=1$, we first obtain an estimate for $P_{1,\,t_0+2}$. We now extend this estimate to $\abs{P_{i,\,i+t_0+1}}$ for all $1\leq i \leq n-t_0-1$. For the inductive step, assuming we have a bound on $P_{i-1,\, i+t_0}$, we obtain a bound on the third term in the formula \eqref{long formula s}. Hence we have a bound on the second term in the formula \eqref{long formula s}. Since $A_{i+t_0+1,\, i+t_0}$ is nonzero, we obtain a bound on $P_{i,\,i+t_0+1}$. Thus, by induction we obtain the bounds on $P_{i,\,i+t_0+1}$ for all $1\leq i \leq n-t_0-1$.

\item $i=n-t_0$

In this case, $A_{i+t_0+1,\, i+t_0}=A_{n+1,\,n}=0$. Hence the second term in the long formula \eqref{long formula} vanishes. Then
\begin{equation}\label{s long formula}
    \begin{aligned}
    \left( P^{-1}AP \right)_{n-t_0+1,\,n+1} 
    = & S\left(n-t_0+1,t_0\right) + \left(P^{-1}\right)_{n-t_0+1,\,n-t_0+1} A_{n-t_0+1,\,n-t_0} P_{n-t_0,\, n+1} \\
    & + \qty(P^{-1})_{n-t_0+1,\,n+2}A_{n+2,\,n}P_{n,\,n+1}. 
    \end{aligned}
\end{equation}
When $t_0=0$, $A_{n-t_0+1,\,n-t_0}=A_{n+1,\,n}$ also vanishes. Consequently, the formula used previously is no longer applicable. However, we can employ the equality
\begin{align*}
\qty(P^{-1}AP)_{n+2,\,n+1}=\qty(P^{-1})_{n+2,\,n+2}A_{n+2,\,n}P_{n,\,n+1}.
\end{align*}
Note that $A_{n+2,\,n} \neq 0 $. Combining the known bounds on $\qty(P^{-1}AP)_{n+2,\,n+1}$ and the diagonal entry $\qty(P^{-1})_{n+2,\,n+2}$, it follows that 
\begin{align*}
\abs{P_{n,\,n+1}} \leq C_{14}.
\end{align*}

When $1 \leq t_0 \leq n-1 $, $A_{n-t_0+1,\,n-t_0}$ never vanishes. The estimate for $P^{-1}_{n-t_0+1,\,n+2}A_{n+2,\,n}P_{n,\,n+1}$ is easily obtained from our induction hypothesis and the estimate for $P_{n,\,n+1}$. Then we get an estimate for $P_{n-t_0,\, n+1}$ from the formula \eqref{s long formula}.

When $t_0 \geq n$, there is nothing to prove.
\item $i=n-t_0+1$

By the induction hypothesis \eqref{ass} and Lemma ~\ref{lem4},
\begin{align*}
\abs{P_{n-t_0+1,\,n+2}} \leq C_{15}.
\end{align*}

\item $n-t_0+2 \leq i \leq n$

The last term of the formula \eqref{long formula} only involves $P_{s,\,s+t}$ for $t\leq t_0$, and $A_{i+t_0+1,\,i+t_0} \neq 0$. Hence we can obtain an estimate for $P_{i,\,i+t_0+1}$ by a similar induction method which has been used in the case $1 \leq i \leq n-t_0-1$.

\item $i=n+1$

The last term of \eqref{long formula} is $P^{-1}_{n+1,\,n+2}A_{n+2,\,n}P_{n,\,n+1+t_0}$. We already have estimates for $P_{n,\,n+1+t_0}$ (when $n-t_0+2 \leq i \leq n$) and $P_{n+1,\,n+2}$ (by the induction hypothesis \eqref{ass} and Lemma~\ref{lem4}). This gives a bound on the last term. If $t_0=0$, $A_{n+t_0+2,\,n+t_0+1}=A_{n+2,\,n+1}=0$. But the estimate for $P_{n+1,\,i+2}=P_{n+1,\,n+2}$ is still available from the above. For $t_0\geq 1$, $A_{n+t_0+2,\,n+t_0+1}$ never vanishes. Using \eqref{long formula}, we can estimate $P_{i,\,i+t_0+1}$. This proposition is now established via induction.

\item $i=n+2$

Since $A_{n+t_0+3,\,n+2+t_0}$ never vanishes, we only need to estimate
the last term of \eqref{long formula}. By the equality \eqref{eq2} in Lemma~\ref{lem3}, 
\begin{align*}
\abs{P_{n,\,n+2+t_0}} = \abs{\left(P^{-1}\right)_{n-t_0-1,\,n}}.
\end{align*} 
By the estimate for $P_{n-t_0-1,\,n}$ (obtained when $1 \leq i \leq n-t_0-1$) and Lemma~\ref{lem4}, we obtain a bound on $\left(P^{-1}\right)_{n-t_0-1,\,n}$.

\item $n+3 \leq i \leq 2n-t_0-1$

The last term of \eqref{long formula} vanishes, so we obtain the estimate by an inductive argument analogous to the one used for the range $1 \leq i \leq n-t_0-1$.
\end{itemize}
\end{enumerate}
Referring back to the induction hypothesis $\eqref{ass}$, it remains to estimate $\qty(P^{-1})_{n-t_0,\,n+2}$. Return to the formula \eqref{long formula} for $i=n-t_0$. The element $A_{i+t_0+1,\, i+t_0}={}A_{n+1,\,n}=0$, hence the second term in \eqref{long formula} vanishes. Then we have
\begin{align} \label{last long formula}
\left( P^{-1}AP \right)_{n-t_0,\,n} 
=  S\left(n-t_0,t_0\right) 
+ \left(P^{-1}\right)_{n-t_0,\,n-t_0} A_{n-t_0,\,n-t_0-1} P_{n-t_0-1,\, n}+ P^{-1}_{n-t_0,\,n+2}A_{n+2,\,n}P_{n,\,n}. 
\end{align}
By the induction hypothesis \eqref{ass}, we have an upper bound on $\qty(P^{-1})_{n-t_0-1,\,n}$. Since $A_{n-t_0,\,n-t_0-1}$ and $A_{n+2,\,n}$ never vanish, using \eqref{last long formula}, we obtain 
\[
\abs{\qty(P^{-1})_{n-t_0,\,n+2}} \leq C_{16}
\]
for some positive constant $C_{16}$.

We have proved that there exist constants $C_{17}$ and $C_{18}$ such that 
\begin{align*}
    \left|P_{i,\,i+t_0+1}\right| &\leq C_{10}, \\ \abs{\qty(P^{-1})_{n-t_0,\,n+2}} &\leq C_{11}
\end{align*}
for all $1 \leq i \leq 2n-t_0-1$. This completes the proof by mathematical induction.
\end{proof}

\section{Uniqueness in a bounded case}\label{Uniqueness in bounded case}
Let $X$ be a hyperbolic Riemann surface (maybe non-compact) and $g_X$ be the unique complete K\"ahler hyperbolic metric on $X$. The canonical line bundle of $X$ is denoted by $K$.

For each $\symbfit{q} = (q_{1},\dots, q_{n-1}, q_{n}) \in \mathop{\bigoplus} \limits_{i=1}^{n-1} H^{0}(X,K^{2i}) \bigoplus H^{0}(X,K^{n}) $, we can construct an $\mathrm{SO}_0(n,n)$-Higgs bundle in the Hitchin section, which is denoted by $(\mathbb{K}_{\mathrm{SO}_0(n,n)}, \theta (\symbfit{q}))$. Recall that $g_X$ induces a diagonal harmonic metric $h_X$ of $(\mathbb{K}_{\mathrm{SO}_0(n,n)}, \theta (\symbf{0}))$ (see Proposition~\ref{h_X}). In this section, we shall prove the uniqueness of the harmonic metric that is compatible with the $\mathrm{SO}_0(n,n)$-structure on any non-compact hyperbolic Riemann surface under certain bounded assumptions. Our previous work has already demonstrated the existence of such a harmonic metric.

\begin{theorem} \label{uniqueness in mutually bounded case}
Let $h_1$ and $h_2$ be two harmonic metrics of $(\mathbb{K}_{\mathrm{SO}_0(n,n)}, \theta\qty(\symbfit{q}))$. Suppose that both $h_i$ are compatible with the $\mathrm{SO}_0(n,n)$-structure and mutually bounded with $h_X$. If $\theta\qty(\symbfit{q})$ is bounded with respect to $h_X$ and $g_X$, then $h_1=h_2$.
\end{theorem}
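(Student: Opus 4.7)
The plan is to adapt the Simpson--Donaldson uniqueness argument to the complete non-compact setting. Let $s$ be the automorphism of $\mathbb{K}_{\mathrm{SO}_0(n,n)}$ defined by $h_2 = h_1\cdot s$; it is positive $h_1$-self-adjoint, and the mutual boundedness of $h_1, h_2$ with $h_X$ gives $\lvert s\rvert_{h_X}+\lvert s^{-1}\rvert_{h_X}$ uniformly bounded on $X$. Since both $h_i$ are harmonic, $F(\theta(\boldsymbol{q}), h_i)=0$, and \eqref{trs fromula} reduces to
\begin{align*}
\sqrt{-1}\Lambda_{g_X}\bar\partial\partial\,\tr(s) = -\bigl\lvert(\bar\partial_E + \theta(\boldsymbol{q}))(s)\,s^{-1/2}\bigr\rvert^2_{h_1,g_X}.
\end{align*}
Hence $\tr(s)$ and, by the symmetry of the hypotheses, $\tr(s^{-1})$ are subharmonic, so $u\coloneqq\tr(s)+\tr(s^{-1})-4n$ is a non-negative, bounded, subharmonic function on the complete hyperbolic surface $(X,g_X)$, vanishing precisely where $s=\mathrm{id}$.

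The analytic heart of the proof is to force $u\equiv 0$. Since bounded subharmonic functions on complete hyperbolic surfaces need not be constant, one must exploit the equality (not merely the sign) in the identity above together with the bounded-Higgs hypothesis. Proposition~\ref{bounds of theta}, combined with $\lvert\theta(\boldsymbol{q})\rvert_{h_X, g_X}$ bounded and the mutual boundedness of the $h_i$ with $h_X$, yields uniform $C^k$ bounds on $s$ over geodesic balls of fixed radius. I would then test the displayed identity against a family of cutoff functions $\chi_R$ adapted to the hyperbolic geometry of $(X,g_X)$, and use these regularity estimates to control the boundary contributions as $R\to\infty$, concluding
\begin{align*}
\int_X\bigl\lvert(\bar\partial_E + \theta(\boldsymbol{q}))(s)\,s^{-1/2}\bigr\rvert^2_{h_1,g_X}\,\omega_{g_X} = 0,
\end{align*}
so that $(\bar\partial_E + \theta(\boldsymbol{q}))(s)\equiv 0$ on $X$.

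The concluding step invokes compatibility. The vanishing of $(\bar\partial_E + \theta(\boldsymbol{q}))(s)$ means $s$ is a holomorphic endomorphism of $\mathbb{K}_{\mathrm{SO}_0(n,n)}$ commuting with $\theta(\boldsymbol{q})$. The $\mathrm{SO}_0(n,n)$-compatibility of both $h_i$, combined with \eqref{comp}, forces $s$ also to preserve the symmetric pairing $C$. The eigen-decomposition of $s$ therefore produces a holomorphic, $\theta(\boldsymbol{q})$-invariant splitting of $\mathbb{K}_{\mathrm{SO}_0(n,n)}$ which is simultaneously $h_1$-orthogonal and $C$-orthogonal. The chain form \eqref{natural Higgs field} of $\theta(\boldsymbol{q})$, whose non-vanishing sub-diagonal and $\theta_n$-coupling link every line summand of the bundle, forbids any non-trivial such splitting, so $s = \lambda\cdot\mathrm{id}$ for some positive $\lambda$; holomorphicity together with reality forces $\lambda$ to be a positive constant, and then compatibility with $C$ pins $\lambda = 1$, giving $h_1 = h_2$.

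The main obstacle is the middle step. On a complete hyperbolic surface, bounded subharmonicity does not by itself force constancy, so one cannot appeal to a maximum principle directly; rather, the proof must exploit the bounded-Higgs hypothesis, which, via Proposition~\ref{bounds of theta} and standard elliptic regularity, supplies the uniform derivative estimates needed to pass to the limit in the integration by parts against cutoffs that compensate for the exponential volume growth of the hyperbolic metric.
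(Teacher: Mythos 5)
Your overall frame (set $s$ by $h_2=h_1\cdot s$, exploit the subharmonicity of $\tr(s)$ coming from \eqref{trs fromula}, and finish with compatibility plus the chain structure of $\theta(\boldsymbol{q})$) matches the paper's starting point, but the analytic middle step --- the one you yourself flag as ``the main obstacle'' --- is a genuine gap that your cutoff/integration-by-parts strategy cannot close. A complete hyperbolic surface such as the Poincar\'e disk is non-parabolic in the potential-theoretic sense: it carries bounded, non-constant harmonic (hence subharmonic) functions, e.g.\ $\mathrm{Re}(z)$ on $\mathbb{D}$, whose gradient with respect to $g_X$ is even bounded and decaying. Consequently no regularity estimate on $s$, however strong, lets you pass to the limit in $\int\chi_R\,\Delta_{g_X}\tr(s)$ and conclude $\int_X\lvert(\bar{\partial}_{E}+\theta)(s)s^{-1/2}\rvert^2_{h_1,g_X}\,\omega=0$; the term $\int(\Delta_{g_X}\chi_R)\tr(s)$ simply does not vanish as $R\to\infty$ on a non-parabolic surface. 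The paper sidesteps this entirely by applying the Omori--Yau maximum principle: it produces points $p_m$ with $\tr(s)(p_m)\geq\sup_X\tr(s)-\frac1m$ and $\sqrt{-1}\Lambda_{g_X}\bar{\partial}\partial\tr(s)(p_m)\geq-\frac1m$, which via the Simpson inequality forces $\lvert[s,\theta]\rvert^2_{h_1,g_X}(p_m)\leq C/m$ --- a smallness statement only at the almost-maximum points, never an integral identity on all of $X$.

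The concluding rigidity step is also thinner than it needs to be. Even granting $(\bar{\partial}_E+\theta)(s)=0$, the assertion that the chain form of $\theta(\boldsymbol{q})$ ``forbids any non-trivial splitting'' is not immediate: because of the zero entries at positions $(n+1,n)$ and $(n+2,n+1)$ in \eqref{natural Higgs field}, a fiber of $\theta(\boldsymbol{q})$ admits only a \emph{quasi}-cyclic vector, and the associated endomorphism genuinely has a two-dimensional eigenspace, so a rank-one invariant splitting is not excluded by the chain structure alone. The paper's Theorem \ref{estimate of s} is precisely the quantitative rigidity that handles this: it combines the real structure $\kappa$ induced by $C$ and $h$ (so that eigenspaces $E_a$ of $s$ are exchanged with $E_{a^{-1}}$), Proposition \ref{0} on quasi-cyclic vectors, and the block form $f=\left(\begin{smallmatrix}0&\theta\\ \theta^{\dagger}&0\end{smallmatrix}\right)$ to rule out the residual one-dimensional eigenspace, yielding $\lvert s-\mathrm{id}\rvert_h\leq C\epsilon$ whenever $\lvert[s,f]\rvert_h\leq\epsilon$. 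Applying that pointwise estimate at the Omori--Yau points gives $\sup_X\tr(s)\leq 2n$, hence $s=\mathrm{id}$. To repair your proof you would need to supply both the Omori--Yau step and this quantitative linear-algebra rigidity; the qualitative holomorphic-splitting argument alone does not suffice, and the integral identity it rests on is unavailable.
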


Let us explain the mutually bounded condition. 
\begin{definition}
For a Hermitian metric $h$ on $\mathbb{K}_{\mathrm{SO}_0(n,n)}$, let $s(h_X,h)$ be the automorphism of $\mathbb{K}_{\mathrm{SO}_0(n,n)}$ determined by $h = h_X \cdot s(h_X,h)$. We say $h$ is mutually bounded with $h_X$ if there exists a constant $C > 0$ such that 
\begin{align*}
    \abs{s(h,h_X)}_{h_X}+\abs{s(h,h_X)^{-1}}_{h_X}\leq C
\end{align*}
holds on $X$.
\end{definition}

Invoking the above theorem, we can prove the uniqueness of the harmonic metric which weakly dominates $h_X$ when holomorphic differentials are bounded. The following proof of Theorem~\ref{uniqueness in bounded differential case} given Theorem~\ref{uniqueness in mutually bounded case} is almost identical to the one in \cite[Proposition 5.3]{li2023higgs}.

\begin{theorem}\label{uniqueness in bounded differential case}                                             
    When $q_1,\dots ,q_n$ are all bounded with respect to $g_X$, the harmonic metric of  
    $(\mathbb{K}_{\mathrm{SO}_0(n,n)},\theta\qty(\symbfit{q}))$ which is compatible with the $\mathrm{SO}_0(n,n)$-structure and weakly dominates $h_X$ is unique. 
\end{theorem}
\begin{proof}
Let $h$ be a harmonic metric of $(\mathbb{K}_{\mathrm{SO}_0(n,n)},\theta\qty(\symbfit{q}))$ which satisfies the two conditions in this theorem. By Theorem~\ref{uniqueness in mutually bounded case}, we only need to check that $h$ is mutually bounded with $h_X$. Recall that $h_X= \mathop{\oplus} \limits_{k=1}^{2n-1} a_{k,\,n} \cdot g_X^{k-n} \oplus h_0$, where $a_{k,\,n}$ are some positive constants and $h_0$ is the constant metric $1$ on the trivial line bundle.

The boundedness of $\symbfit{q}$ implies that the eigen-forms of $\theta\qty(\symbfit{q})$ are all bounded with respect to $g_X$. By \cite[Proposition~3.12]{li2020complete}, there exists a constant
$C >0$ such that
\[
\abs{\theta\qty(\symbfit{q})}_{h,g_X} \leq C
\]
holds on $X$, i.e., $\theta\qty(\symbfit{q})$ is bounded with respect to $h$ and $g_X$. Note that $\theta\qty(\symbfit{q})$ is also bounded with respect to $h_X$ and $g_X$.

For any point $x \in X$, choose $\tau_x \in K|_x$ such that $\abs{\tau_x}_{g_X}=1$. Set
\begin{align*}
    e_k= 
        \begin{cases}
            \frac{1}{a_{k,\,n}} \tau_x^{n-k}, &1 \leq k \leq n, \\
              1 & k=n+1, \\
              \frac{1}{a_{k-1,\,n}} \tau_x^{n-k+1}, &
              n+2 \leq k \leq 2n.
        \end{cases}
\end{align*}
 We get an orthonormal basis $\symbfit{e}=\qty(e_1,\dots,e_{2n})$ of $\mathbb{K}_{\mathrm{SO}_0(n,n)}|_x$ with respect to $h_X$. Let $A$ be the matrix determined by $\theta(\symbfit{q})|_x(\symbfit{e})=\symbfit{e}\cdot A\tau_x$. Because $\theta(\symbfit{q})$ is bounded with respect to $h_X$ and $g_X$, there exists $B_1> 0$ which is independent of $x$ such that $\abs{A}_{\infty}\leq B_1$. Additionally, $A$ satisfies the conditions in Proposition~\ref{estimate P pro}. Recall that
 \[
  \widetilde{\left| A \right|_{\infty}} \coloneqq \max \{\max _{k \neq n,\, n+1}\left|\left(A_{k+1,\,k}\right)^{-1}\right|,\abs{\left(A_{n+2,\,n}\right)^{-1}}\}.
 \]
 So we have $\widetilde{\abs{A}_{\infty}}=1$.
 
Applying the Gram-Schmidt process to the basis $\symbfit{e}$ and the metric $h|_x$, we obtain an orthonormal basis $\symbfit{v}$ of $\mathbb{K}_{\mathrm{SO}_0(n,n)}|_x$ with respect to $h|_x$. Let $P$ be the upper triangular matrix determined by $\symbfit{v}=\symbfit{e}\cdot P$. Since $\theta(\symbfit{q})$ is bounded with respect to $h$ and $g_X$, there exists $B_2>0$, which is independent of $x$, such that$\abs{P^{-1}AP}\leq B_2$. Because $h$ weakly dominates $h_X$, we obtain $P_{1,1}\geq c$, where $c$ is a positive constant independent of $x$. By Proposition~\ref{estimate P pro}, there exists $B_3>0$, which is also independent of $x$, such that $\abs{P}+\abs{P^{-1}}\leq B_3$. Therefore, there exists a positive constant $B_{4}$ such that
\[
\left|s(h,h_X)\right|_{h_X}+\left|s(h,h_X)^{-1}\right|_{h_X}\leq B_4
\]
holds on $X$.
\end{proof}

\subsection{Preliminaries on Linear Algebra}

\subsubsection{Quasi-cyclic vectors}
Consider an $m$-dimensional complex vector space $E$ equipped with an endomorphism $f$. A vector $e\in E$ is called an $f$-cyclic vector if $e,f(e),\dots,f^{m-1}(e)$ generate $E$. The following proposition is well known. 
\begin{proposition}\label{cyclic vector dimension 1}
 The endomorphism $f$ has a cyclic vector if and only if the eigenspace corresponding to each eigenvalue of 
 $f$ has dimension $1$.
\end{proposition}

We need the following weaker condition that can be applied to our case.
\begin{definition}
  Let $E$ be an $m$-dimensional complex vector space equipped with an endomorphism $f$. A vector $e\in E$ is called an $f$-quasi-cyclic vector if $e,f(e),\dots,f^{m-2}(e)$ spans a vector subspace of dimension $m-1$.
\end{definition}

Consider a Hermitian metric $h$ on $E$. For any $e \in E$, set $\omega(f,e) = e\wedge f(e) \wedge \dots \wedge f^{m-2}(e)$. Then, $e$ is an $f$-quasi-cyclic vector of $E$ if and only if $\omega(f,e)\neq0$. Obviously, $\abs{\omega(f,e)}_h \leq \abs{f}_h^{(m-2)(m-1)/2}\abs{e}_h^{m-1}$. The following lemma is similar to \cite[Lemma~2.16]{li2023higgs} and the proofs of the two lemmas are almost identical. 
\begin{lemma}\label{quasicyclic}
Let $A$ and $\rho$ be positive constants.
Assume that
\begin{align*}
\abs{f}_h \leq A, \quad  \rho\abs{e}_h^{m-1} \leq \abs{\omega(f,e)}_h
\end{align*}
for some non-zero element $e\in E$. Then 
there exists a constant $\epsilon_0(m,A,\rho)>0$ depending only on $m$, $A$ and $\rho$, such that for every endomorphism $g$ of $E$ with $\abs{f-g}_h \leq \epsilon_0(m,A,\rho)$, 
\begin{align*}
    \frac{1}{2}\rho\abs{e}_h^{m-1}<\abs{\omega(g,e)}_h.
\end{align*}
Consequently, $e$ is a $g$-quasi-cyclic vector.
\end{lemma}
\begin{proof}
We can assume $\abs{f-g}_h<1$. Then 
\begin{align*}
\left|f^j(e)-g^j(e)\right|_h\leq\sum_{k=1}^j\dbinom{j}{k}\abs{f}_h^{j-k}\abs{f-g}_h^k|e|_h\leq\abs{f-g}_h(1+\abs{f}_h)^j\abs{e}_h.
\end{align*}
We get
\begin{align*}
& \left| e\wedge f( e)\wedge\dots\wedge f^{j-1}(e)\wedge(f^j(e)-g^j(e))\wedge g^{j+1}(e)\wedge\dots g^{m-2}(e)\right|_h \\
\leq & \abs{e}_h^{m-1}\cdot \abs{f}_h^{j(j-1)/2}\cdot
\abs{f^j-g^j}_h\cdot \abs{g}_h^{(m-1)(m-2)/2-j(j+1)/2}\\
\leq & \abs{e}_h^{m-1}\cdot \abs{f-g}_h\cdot(1+|f|_h)^{(m-1)(m-2)/2}.
\end{align*}
Hence,
\begin{align*}
\left|\omega(f,e)-\omega(f_1,e)\right|_h\leq (m-1)(1+|f|_h)^{(m-1)(m-2)/2}|f-f_1|_h\cdot|e|_h^{m-1}.
\end{align*}
Now, this lemma is clear.
\end{proof}
\subsubsection{Real structures and self-adjoint endomorphisms} 
Let $E$ be a finite dimensional complex vector space which is equipped with a non-degenerate symmetric pairing $C$. Consider a self-adjoint endomorphism $f$ of $E$. Let $E_\alpha$ be the generalized eigenspace corresponding to the eigenvalue $\alpha$ of $f$. We have $E=\bigoplus_{\alpha\in\mathbb{C}}E_\alpha$. The following lemma is well known, so we omit the proof.
\begin{lemma}\label{C orthogonal}
If $\alpha\neq\beta$, then $E_{\alpha}$ and $E_{\beta}$ are orthogonal with respect to $C$.
\end{lemma}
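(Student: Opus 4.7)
The plan is to exploit the defining property $C(fu,v)=C(u,fv)$ in the form that any polynomial in $f$ (in particular $(f-\alpha I)^k$) is also self-adjoint with respect to $C$, and then use the classical fact that on a generalized eigenspace for an eigenvalue different from $\alpha$, the operator $f-\alpha I$ acts invertibly.

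More concretely, I would proceed as follows. Fix $u\in E_\alpha$ and $v\in E_\beta$ with $\alpha\neq\beta$. Choose $k$ large enough that $(f-\alpha I)^k u = 0$ on $E_\alpha$ (such a $k$ exists by the very definition of generalized eigenspace). Next observe that on $E_\beta$ the operator $f-\beta I$ is nilpotent, so $f-\alpha I=(\beta-\alpha)I+(f-\beta I)$ is a non-zero scalar plus a nilpotent, hence invertible; therefore $(f-\alpha I)^k$ restricts to an automorphism of $E_\beta$. Pick $w\in E_\beta$ with $v=(f-\alpha I)^k w$. Since $f$ is $C$-self-adjoint, so is $f-\alpha I$ (and thus any of its powers), which gives
\begin{equation*}
C(u,v)=C\bigl(u,(f-\alpha I)^k w\bigr)=C\bigl((f-\alpha I)^k u,\,w\bigr)=C(0,w)=0.
\end{equation*}
Since $u\in E_\alpha$ and $v\in E_\beta$ were arbitrary, this proves $E_\alpha\perp_C E_\beta$.

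There is essentially no obstacle here; the only point that requires a moment of thought is the invertibility of $f-\alpha I$ on $E_\beta$, which follows from the standard observation that a non-zero scalar plus a nilpotent operator is invertible (one can even write the inverse as a finite Neumann-type series). The self-adjointness of $(f-\alpha I)^k$ with respect to $C$ is immediate by induction from the hypothesis $C(fu,v)=C(u,fv)$, using that the scalar $\alpha I$ is trivially $C$-self-adjoint.
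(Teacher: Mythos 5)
Your argument is correct and complete: the invertibility of $f-\alpha I$ on $E_\beta$ (non-zero scalar plus nilpotent), the $C$-self-adjointness of $(f-\alpha I)^k$ (noting that $C$ is bilinear, so $\alpha I$ is trivially self-adjoint), and the surjectivity step $v=(f-\alpha I)^k w$ together yield $C(u,v)=0$ exactly as you write. The paper states this lemma as ``well known'' and gives no proof, and yours is the standard argument one would supply.
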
 
 
Equip $E$ with a Hermitian metric $h$ which is compatible with $C$ (see Section~\ref{Preliminaries on compatibility of symmetric pairings and Hermitian metrics}). Then the metric $h$ and the pairing $C$ induce a real structure $\kappa$, which is determined by  
\begin{align*}
 h\qty(\cdot,\cdot)=C\qty(\cdot,\kappa(\cdot)).
\end{align*}
Note that the compatibility condition ensures $\kappa^2=1$. For more details, we refer readers to see \cite[Section 2]{Li2023generically}. 
 
To prove the key Theorem~\ref{estimate for s}, we first require several propositions.
\begin{proposition} \cite[Proposition~2.11]{li2023higgs} \label{U=k(U)=0}
Let $U$ be a subspace of $E$ which satisfies the following conditions: 
\begin{itemize}
    \item $f\qty(U) \subset U$ and $f\qty(\kappa\qty(U)) \subset \kappa\qty(U)$,
    \item $U \cap \kappa(U) =0$.
\end{itemize}
 Then either $U=0$, or $f|_U$ and $f|_{\kappa(U)}$ have a common eigenvalue.
\end{proposition}

We adapt Corollary~2.12 of \cite{li2023higgs} to our setting.
\begin{corollary}\label{0}
Let $U$ be a subspace of $E$ which satisfies the two conditions in Proposition~\ref{U=k(U)=0}.
 
Suppose that there exists a $f$-quasi-cyclic vector. Then we have 
either $U=\kappa\qty(U)=0$, or $\dim U=\dim \kappa\qty(U)=1$. Moreover, in the second case, $U \oplus \kappa\qty(U)$ is an eigenspace of $f$.
\end{corollary}
\begin{proof}  
 For $\dim E \leq 1$, it is trivial. So we assume $\dim E \geq 2$. Let $e$ be a quasi-cyclic vector of $f$. The linear span of $\qty{e,f(e),\dots,f^{\dim E-2}(e)}$ is denoted by $E'$. 
\begin{itemize} 
    \item Suppose $f$ has a cyclic vector. We want to show that $U=0$. If not, Proposition~\ref{U=k(U)=0} tells us that $f$ has a eigenspace whose dimension is at least $2$. By Proposition~\ref{cyclic vector dimension 1}, this is impossible.
    \item Suppose $f$ has no cyclic vector. The vector $e$ is just a quasi-cyclic vector of $f$, i.e., $f\qty(E') \subset E'$ and $\dim \qty(E')=\dim \qty(E)-1$. 
    Let $L$ be a one dimensional vector space such that  $E= E' \oplus L$. 
    Since $f(E') \subset E'$, with respect to this decomposition, the matrix of $f$ has the block upper triangular form
    \begin{align} \label{matrix of f}
        \begin{pmatrix}
            A & \alpha \\
            0 & \lambda
        \end{pmatrix},
    \end{align}    
   where $A$ is a $\dim(E') \times \dim(E')$ square matrix representing $f|_{E'}$, $\alpha \in \mathbb{C}^{\dim
   (E')}$ and $\lambda \in \mathbb{C}$.
    It's straightforward to check that $\lambda$ is an eigenvalue of $f$.
   Our next objective is to choose the complementary line $L$ such that $\alpha=0$.

    Let $\Lambda=\qty{\mu_1,\dots,\mu_{l}}$ be the set of all eigenvalues of $f|_{E'}$. For $\mu \in \Lambda$, denote by $V'_{\mu}$ and $\mathcal{V}'_{\mu}$ the eigenspace and the generalized eigenspace of $f|_{E'}$ corresponding to the eigenvalue $\mu$, respectively.
    Notice that $e\in E'$ is a $f|_{E'}$-cyclic vector. The Proposition~\ref{cyclic vector dimension 1} implies that $\dim (V_{\mu_i}')=1$ for each $i$. The full spectrum of $f$ is $\Lambda \cup \{\lambda\}$. Denote by $V_{\mu}$ and $\mathcal{V}_{\mu}$ denote the eigenspace and the generalized eigenspace of $f$ corresponding to the eigenvalue $\mu \in \Lambda \cup \{\lambda\}$.
    
     We first show that $\lambda \in \Lambda$.

    We have the direct sum 
    \[
    E'=\mathop{\oplus} \limits_{i=1}^l \mathcal{V}'_{\mu_i}.
    \]
    
    Assume $\lambda \notin \Lambda$. Then $ E' \cap V_{\lambda}=0 $.
    Since $\dim (E')=\dim(E)-1$ and $V_{\lambda} \neq 0$, we have $\dim(V_{\lambda
    })=1$ and 
    \[
    E=\mathop{\oplus} \limits_{i=1}^l \mathcal{V}'_{\mu_i} \oplus V_{\lambda}.
    \]
    Hence $V_{\mu_i}'$ (resp. $\mathcal{V}'_{\mu_i}$) is also the eigenspace (resp. generalized eigenspace) of $f$ corresponding to the eigenvalue $\mu_i$ for $1 \leq i \leq l$.
    Then we obtain that each eigenspace of $f$ has dimension $1$. By Proposition~\ref{cyclic vector dimension 1}, this would imply $f$ has a cyclic vector, contradicting our assumption. Thus $\lambda \in \Lambda$.

 Next we verify that $V_{\lambda} \nsubseteq E'$. 
 
 We have proved $\lambda \in \Lambda$, therefore $\Lambda=\qty{\mu_1,\dots,\mu_{l}}$ is exactly the spectrum of $f$.
From\eqref{matrix of f}, we know that 
\[
 \mathrm{a}(\lambda,f) = \mathrm{a}(\lambda, f|_{E'})+1,
\]
where $\mathrm{a}(\lambda,f)$ (resp. $\mathrm{a}(\lambda, f|_{E'})$) denotes the algebraic multiplicity of $\lambda$ with respect to $f$ (resp. $f|_{E'}$). Then
\[
\dim(\mathcal{V}_{\lambda}) = \dim(\mathcal{V}'_{\lambda}) +1.
\]
Recall 
\[
E=\mathop{\oplus} \limits_{i=1}^l \mathcal{V}_{\mu_i}, \quad E'=\mathop{\oplus} \limits_{i=1}^l \mathcal{V}'_{\mu_i}.
\]
Since $\mathcal{V}'_{\mu_i} \subset \mathcal{V}_{
\mu_i}$ and $\dim (E)=\dim(E')+1$, we have 
\[
\mathcal{V}'_{\mu_i} =  \mathcal{V}_{\mu_i} \quad \text{for} \quad \mu_i \neq \lambda.
\]
Hence also $ V_{\mu}' =  V_{\mu}$ for $\mu \neq \lambda$.
Since $f$ admits no cyclic vector, Proposition~\ref{cyclic vector dimension 1} implies the existence of an eigenvalue $\mu \in \Lambda$ with $ \dim(V_{\mu}) \geq 2 $.
For $\mu \in \Lambda$, we already have 
$\dim(V_{\mu}) =  \dim(V_{\mu}'=1$. Therefore, the only possibility is
\[\dim(V_{\lambda}) \geq 2 .
\]
In fact, the dimension count $\dim (E)=\dim(E')+1$ forces 
\[
\dim(V_{\lambda}) = 2.
\]
If $V_{\lambda} \subset E'$, then $f|_{E'}$ would have an eigenspace of dimension $2$, contradicting Proposition~\ref{cyclic vector dimension 1} (since $f|_{E'}$ admits a cyclic vector). Hence $V_{\lambda} \nsubseteq E'$.
   
Choose $u \in V_{\lambda}$ but not in $E'$. Let $L$ be the line spanned by $u$. So we obtain a decomposition $E=E' \oplus L $ such that $f\qty(E') \subset E', f(L) \subset L$. Consequently, with respect to this decomposition, the matrix of $f$ takes the block diagonal form
    \begin{align*}
        \begin{pmatrix}
            A & 0 \\
            0 & \lambda
        \end{pmatrix},
    \end{align*}    
   where $A$ is a $\dim(E') \times \dim(E')$ square matrix representing $f|_{E'}$. Thus $f$ has a $1 \times 1$ Jordan block for the eigenvalue $\lambda$. 
     
     Let $V_{\mu}$ be the eigenspace corresponding to the eigenvalue $\mu \in \Lambda$ of $f$. We have proved
     \begin{align*}
         \dim(V_{\mu}) = 
         \begin{cases}
             1 , \, & \mu\neq \lambda,\\
             2 , & \mu=\lambda.
         \end{cases}
     \end{align*}
    
     With the preliminaries settled, we now prove this proposition. Suppose
     $U$ and $\kappa\qty(U)$ are both nonzero. Then $f|_{U}$ and $f|_{\kappa\qty(U)}$ have only one common eigenvalue $\lambda$, and the eigenspaces corresponding to $\lambda$ of $f|_{U}$ and $f|_{\kappa\qty(U)}$ are both one dimensional. Let $u \in U$ be a generalized eigenvector of $f|_{U}$ for $\mu \neq \lambda$. Then $h\qty(u,u)=C\qty(u,\kappa\qty(u))=0$ by Lemma~\ref{C orthogonal}. 
     Hence $U$ is the space of generalized eigenvectors of $f|_{U}$ for eigenvalue $\lambda$. The subspace $\kappa\qty(U)$ is similar. Now using the uniqueness of the Jordan normal form of $f$, we obtain $\dim U=\dim \kappa\qty(U)=1$ and $U \oplus \kappa\qty(U)$ is the eigenspace of $f$ corresponding to $\lambda$.
\end{itemize}
\end{proof}

\subsubsection{The key theorem}

Consider two complex vector spaces $V,W$ with dimension $n$. Let $C_V,C_W$ be non-degenerate symmetric pairings on $V,W$ respectively. Consider a linear map $\theta \colon W\to V$. Denote by $\theta^{\dagger}$ the adjoint of $\theta$ with respect to $C_V,C_W$. 
 Let 
\begin{align*}
    E= V\oplus W, \quad
    C=
    \begin{pmatrix}
        C_V &  \\
           & C_W
    \end{pmatrix}, \quad
    f=
    \begin{pmatrix}
      0  & \theta \\
       \theta^{\dagger} &  0
    \end{pmatrix}.
\end{align*}
Note that $C$ is a non-degenerate symmetric pairing of $E$ and $f$ is self adjoint with respect to $C$.
    
Suppose that $E$ is equipped with a basis $e=(e_1,\dots,e_{2n})$ such that 
\begin{align*}
    f(e_k)=e_{k+1}+\sum_{j\leq k}\mathcal{A}(f)_{j,\,k}e_j \quad k=1,\dots,2n-2,
\end{align*}
i.e., $e_1$ is a $f$-quasi-cyclic vector.

For $\rho> 0$, let $\mathcal{H} ( E, \symbfit{e}, C_V,C_W; \rho)$ be the space of Hermitian metrics $h$ of $V$ such that 
\begin{itemize}
    \item $h=h|_V \oplus h|_W$ where $h|_V, h|_W$ are compatible with $C_V,C_W$ respectively,
    \item $\abs{e_1\wedge \dots \wedge e_{2n-1}}_h \geq \rho\abs{e_1}^{2n-1}$.
\end{itemize}
For $h,h^{\prime}\in\mathcal{H} ( E, \symbfit{e}, C_V,C_W; \rho)$, recall that the automorphism $s( h, h^{\prime})$ of $E$ is determined by 
\[
h^{\prime}(u,v)=h(s(h,h^{\prime})u,v)
\]
for any $u,v\in E$.

The following theorem is motivated by \cite[Proposition 2.15]{li2023higgs}.
\begin{theorem}\label{estimate for s} 
Let $A>0$ and $\rho>0$. There exist $\epsilon\qty(n,\,A,\,\rho)>0$ and $C\qty(n,\,A,\,\rho)>0$ depending only on $n$, $A$ and $\rho$ such that the following holds for any $0<\epsilon<\epsilon\qty(n,\,A,\,\rho)$.

Suppose $\abs{f}_{h}\leq A$ for a Hermitian metric $h \in \mathcal{H} ( E, \symbfit{e}, C_V,C_W; \rho)$. Then for any $h' \in \mathcal{H} ( E, \symbfit{e}, C_V,C_W; \rho)$ such that $\abs{\comm{s\qty(h,h')}{f}}_{h}\leq \epsilon$, we have
\begin{align*}
\abs{s\qty(h,h')-\id_{E}}_h \leq C\qty(n,\,A,\,\rho)\epsilon.
\end{align*}
\end{theorem}
\begin{proof}
Recall that the automorphism $s( h, h^{\prime})$ of $E$ is self-adjoint with respect to both $h$ and $h^{\prime}$. We obtain the eigen-decomposition $E= \bigoplus _{a> 0}E_a$ of $s( h, h^{\prime})$, which is orthogonal with respect to $h$ and $h'$ simultaneously.

Let $\kappa$ be the real structure induced by $C$ and $h$. It is worth noting that $\kappa(E_a)=E_{a^{-1}}$ by \cite[Lemma~2.13]{Li2023generically}. Set $\mathcal{S}(h,h^{\prime})\coloneq \{a>1\mid E_a\neq0\}$. If $\mathcal{S}(h, h^{\prime}) = \emptyset$, we get $s(h,h^{\prime})=\mathrm{id}_E$. So we consider the case of $\mathcal{S}(h,h^{\prime})\neq\emptyset$.

Let $\nu$ be any positive number such that $\nu\leq\min\{1,\max\mathcal{S}(h,h^{\prime})-1\}$. Let $c_1<c_2<\dots<c_m$ denote the elements of $\mathcal{S}(h,h^{\prime})$. Set $c_0=1$. Because $\abs{\mathcal{S}(h,h^{\prime})}\leq n$, there exists $1\leq l \leq m$ such that 
\begin{itemize} 
    \item $ c_i- c_{i-1}\leq \frac 12n^{-1}\nu$ for $i<l$,
    \item $ c_l-c_{l-1}>\frac{1}{2}n^{-1}\nu$.
\end{itemize}

We divide $\mathcal{S}(h,h^{\prime})$ into two subsets. Set $\mathcal{S} ( h, h^{\prime}; \nu)_0= \{ c_1, \dots , c_{l - 1}\} $ and $\mathcal{S} ( h, h^\prime; \nu) _1= \{ c_{l}, \dots , c_m\}$. Note that $\mathcal{S} ( h, h^{\prime}; \nu)_0$ maybe empty but $\mathcal{S} ( h, h^\prime; \nu) _1$ must be non-empty.
\begin{lemma}\label{the difference of eigenvalues} For any $a_0\in\mathcal{S}(h,h^{\prime};\nu)_0\cup\{1\}$ and $a_1\in\mathcal{S}(h,h^{\prime};\nu)_1$, we have $a_0^{-1}-a_1^{-1}\geq\frac1{12}n^{-1}\nu$.
\end{lemma}
\begin{proof}
    Because $\nu\leq1$, we obtain the first claim. The second claim is clear. For any $a_0\in S(h,h^{\prime};\nu)_0\cup\{1\}$ and
 $a_1\in\mathcal{S}(h,h'\nu)_1$, we obtain
\begin{align*}
    a_0^{-1}-a_1^{-1} \geq 
    a_0^{-1}-(a_0+n^{-1}\nu/2)^{-1} = a_0^{-1}\qty(a_0+n^{-1}\nu/2)^{-1}\frac12n^{-1}\nu\geq\frac12\cdot\frac13\cdot\frac12n^{-1}\nu=\frac1{12}n^{-1}\nu.
\end{align*}
\end{proof}

Set
\[
U^{(\nu)}=\bigoplus_{a\in\mathcal{S}(h,h^{\prime};\nu)_1}E_a,\quad E^{(\nu)}=E_1\oplus\bigoplus_{a\in\mathcal{S}(h,h^{\prime};\nu)_0}E_a\oplus\bigoplus_{a^{-1}\in\mathcal{S}(h,h^{\prime};\nu)_0}E_a.
\]
Since $\mathcal{S}(h,h^{\prime};\nu)_1\neq\emptyset$, we have $U^{(\nu)}\neq 0$. Notice that $U^{(\nu)}\cap\kappa(U^{(\nu)})=0$ since $\kappa(E_a)=E_{a^{-1}}$. We obtain the decomposition
\[
E=E^{(\nu)}\oplus U^{(\nu)}\oplus\kappa(U^{(\nu)}).
\]
For the endomorphism $f$ of $E$, we have the following decomposition
\[
f=\sum_{U_1,U_2 \in \{E^{(\nu)},U^{(\nu)},\kappa(U^{(\nu)})\}}f_{U_1,U_2},
\]
where $f_{U_1, U_2} \in \Hom( U_2, U_1)$. Set
\[
\widetilde{f}^{(\nu)}=f_{E^{(\nu)},E^{(\nu)}}+f_{U^{(\nu)},U^{(\nu)}}+f_{\kappa(U^{(\nu)}),\kappa(U^{(\nu)})}.
\]
Roughly speaking, this is the diagonal part of $f$ under the above decomposition.

Notice that $V,W$ are orthogonal with respect to $h$ and $h'$ simultaneously. Hence $s\qty(h,h')$ can be decomposed as $s_V \oplus s_W$,
 here $s_V \colon V\to V$ and $s_W \colon W \to W$. So we get
\begin{align*}
V&=V \cap E^{(\nu)} \oplus V \cap U^{(\nu)} \oplus V \cap \kappa(U^{(\nu)}), \\
W&=W \cap E^{(\nu)} \oplus W \cap U^{(\nu)} \oplus W \cap \kappa(U^{(\nu)}).
\end{align*}
Recall that 
\begin{align*}
    f=
    \begin{pmatrix}
      0  & \theta \\
       \theta^{\dagger} &  0
    \end{pmatrix},
\end{align*}
where $\theta \colon W \to V $ and $\theta^{\dagger} \colon V\to W$. By using the decompositions of $V$ and $W$ as presented above, we can define $\widetilde{\theta}^{(\nu)}\colon W\to V$ and $ \widetilde{\qty(\theta^{\dagger})}^{(\nu)}\colon V\to W$ which are analogous to $\widetilde{f}^{(\nu)}$. 
\begin{lemma}\label{theta}
We have 
\begin{align}
    \widetilde{f}^{(\nu)}=\widetilde{\theta}^{(\nu)} + \qty(\widetilde{\theta}^{(\nu)})^{\dagger}.
\end{align}
\end{lemma}
\begin{proof}
It is easy to verify that $\widetilde{f}^{(\nu)}= \widetilde{\theta}^{(\nu)} + \widetilde{\qty(\theta^{\dagger})}^{(\nu)}$. So we only need to check that 
\[
    \widetilde{\qty(\theta^{\dagger})}^{(\nu)} =\qty(\widetilde{\theta}^{(\nu)})^{\dagger}.
\]
We first prove that $\widetilde{f}^{(\nu)}$ is self adjoint with respect to $C$.

We abbreviate $U=U^{(\nu)}$ and put $\widetilde{U}=U\oplus\kappa(U)$. So we have the decomposition $E=E^{(\nu)}\oplus\widetilde{U}$, which is orthogonal with respect to $C$ by \cite[Lemma 2.10]{Li2023generically}. Consequently, the operator $f$ splits as 
\[
f=\sum_{U_1,U_2=E^(\nu),\widetilde{U}}f_{U_2,U_1}.
\] Since $f$ is self-adjoint with respect to $C$, we have $f_{E^{( \nu) }, E^{( \nu) }}$ and $f_{\widetilde {U}, \widetilde {U}}$ are self-adjoint with respect to $C$.

 Now consider the further splitting  $\widetilde{U}=U\oplus\kappa(U)$ and accordingly 
 \[
 f_{\widetilde{U},\widetilde{U}}=\sum_{U_1,U_2=U,\kappa(U)}f_{U_2,U_1}.\] Note that the restrictions of $C$ to $U$ and $\kappa(U)$ are $0$ since $U$ is $h$-orthogonal to $\kappa(U)$. Then, we can easily verify that $f_{U,U}+f_{\kappa(U),\kappa(U)}$ is self-adjoint with respect to $C$. Thus, we get $\widetilde{f}^{(\nu)}$ is self adjoint with respect to $C$.

 Now, we can verify this equality. Note that $\widetilde{f}^{(\nu)}=\widetilde{\theta}^{(\nu)}+\widetilde{\qty(\theta^{\dagger})}^{(\nu)}$ and $V$ is $C$-orthogonal to $W$. For any $v\in V,w\in W$, we have 
 \begin{align*}
     C_V\qty(\widetilde{\theta}^{(\nu)}\qty(w),v)
     = C\qty(\widetilde{f}^{(\nu)}\qty(w),v) 
     = C\qty(w,\widetilde{f}^{(\nu)}\qty(v)) 
     = C_W\qty(w,\widetilde{\qty(\theta^{\dagger})}^{(\nu)}\qty(v)).
 \end{align*}
\end{proof}

We have the following lemma.
\begin{lemma}\cite[Lemma~2.19]{li2023higgs}
    We have $\abs{f-\widetilde{f}^{(\nu)}}_h\leq\nu^{-1}(10n)^3\abs{\comm{f}{s(h,h^{\prime})}}_h$.
\end{lemma}

Recalling the positive constant $\epsilon_0(n,A,\rho)$ in Lemma~\ref{quasicyclic}, we set
\[
\epsilon_1(n,A,\rho) \coloneqq \frac{1}{2}(10n)^{-3}\epsilon_0(n,A,\rho),\quad C_1(n,A,\rho) \coloneqq 2n\epsilon_1(n,A,\rho)^{-1}.
\]
For $0< \epsilon< \epsilon_1( n, A, \rho)$, we assume $\abs{\left[s(h,h^{\prime}),f\right]}_h\leq\epsilon$. Let 
\[
\nu_1 \coloneq \epsilon_1(n,A,\rho)^{-1}\epsilon<1.
\]
If $\nu_1\leq \operatorname* { max} S( h, h^{\prime}) - 1, $ we obtain
\[
\abs{f-\widetilde{f}^{(\nu_1)}}_h\leq\nu_1^{-1}(10n)^3\abs{[f,s(h,h')}_h\leq\epsilon_1(n,A,\rho)(10n)^3\leq\epsilon_0(n,A,\rho).
\]
By Lemma~\ref{quasicyclic}, there exists an $\widetilde{f}^{(\nu_1)}$-quasi-cyclic vector. Since $U^{\qty(\nu_1)} \neq 0$, we get $\dim U^{\qty(\nu_1)}= \kappa \qty(U^{\qty(\nu_1)})=1$ according to Corollary~\ref{0}. Hence $U^{\qty(\nu_1)}=E_a,\kappa\qty(U^{\qty(\nu_1)})=E_{a^{-1}}$ for some $a > 1$.
Moreover, $E_a \oplus E_{a^{-1}}$ is the eigenspace associated with an eigenvalue $\lambda$ of $\widetilde{f}^{(\nu_1)}$.
The splittings of $h$ and $C$ ensure the splitting of $\kappa$. Therefore, either $E_a,E_{a^{-1}} \subset V$, or $E_a,E_{a^{-1}} \subset W$.

Without loss of generality, we can assume that $E_a,E_{a^{-1}} \subset V$. For a nonzero vector $v \in E_a \subset V$, we have
\[
\lambda v= \widetilde{f}^{(\nu_1)}(v)=\widetilde{\theta}^{(\nu_1)}(v)+\qty(\widetilde{\theta}^{(\nu_1)})^{\dagger}(v)=\qty(\widetilde{\theta}^{(\nu_1)})^{\dagger}(v) \in W.
\]
The second equality follows from Lemma ~\ref{theta}.
Hence $\lambda=0$ and $\qty(\widetilde{\theta}^{(\nu_1)})^{\dagger}(v)=0$.

Note that a linear transformation and its adjoint share the same spectrum. Hence there also exists a nonzero vector $w \in W$ such that $\widetilde{\qty(\theta)}^{(\nu_1)}(w)=0$, i.e., $E_{\widetilde{f}^{(\nu_1)},0}=E_a \oplus E_{a^{-1}} \nsubseteq V$, a contradiction.

Hence, we obtain $\max\mathcal{S}(h,h^{\prime})-1<\nu_1$. Therefore, $\abs{s- \id}_h\leq 2n\nu_1\leq C_1( n, A, \rho) \epsilon$.
\end{proof}
    \subsection{Proof of Theorem ~\ref{uniqueness in mutually bounded case}}
The following is identical
to the proof of \cite[Theorem 5.1]{li2023higgs}. For the convenience of readers, we include it here.
\begin{proof}
[Proof of Theorem ~\ref{uniqueness in mutually bounded case}]
For notational simplicity, we denote $\theta(\symbfit{q})$ by $\theta$.
Let $s$ be the automorphism of $\mathbb{K}_{\mathrm{SO}_0(n,n)}$ defined by $h_2 = h_1 \cdot s$. We recall the following inequality, originally due to Simpson \cite[Lemma 3.1]{simpson1988constructing} (a detailed discussion appears in \cite[Section 3.1.3]{li2023higgs}):
\[
\sqrt{-1}\Lambda_{g_X}\bar{\partial}\partial\operatorname{tr}(s)\leq-\big|\bar{\partial}(s)s^{-1/2}\big|_{h_1,g_X}^2-\big|\big[s,\theta\big]s^{-1/2}\big|_{h_1,g_X}^2.
\]
Here $g_X$ denotes the unique complete K\"ahler hyperbolic metric on $X$ and $\Lambda_{g_{X}}$ is the contraction operator with respect to the associated K\"ahler form $\omega$ of $g_X$. 

By the Omori–Yau maximum principle, there exists a sequence of points ${p_m}$ in $X$ such that
\[
\operatorname{tr}(s)(p_m)\geq\sup_X\operatorname{tr}(s)-\frac1m,\quad\sqrt{-1}\Lambda_{g_X}\bar{\partial}\partial\operatorname{tr}(s)\qty(p_m)\geq-\frac1m.
\]

Recall $h_1$ and $h_2$ are mutually bounded with $h_X$ and $\theta$ is bounded with respect to $h_X$ and $g_X$. Hence there exists $C_1>0$ satisfying
\[
\abs{\comm{s}{\theta}}_{h_1,g_X}^2 \qty(p_m) \leq \frac{C_1}{m}.
\]

Let $\tau_m$ be a frame of the canonical line bundle of $X$ at $p_m$ such that $|\tau_m|_{g_X}=1$. Set
\begin{align*}
    e_{m,k}= 
        \begin{cases}
             \tau_m^{n-k}, &1 \leq k \leq n, \\
              1 & k=n+1, \\
               \tau_m^{n-k+1}, &
              n+2 \leq k \leq 2n.
        \end{cases}
\end{align*}
Then we obtain a frame $\symbfit{e}=\qty{e_{m,1},\dots,e_{m,2n}}$ of $\mathbb{K}_{\mathrm{SO}_0(n,n)}|_{p_m}$. Since both $h_i$ are mutually bounded with $h_X$, we can find constants $B>1$ and $\rho$ such that $B^{-1} \leq |\tau_m^{n-1}|_{h_i}\le B$ and $\abs{e_{m,1}\wedge \dots \wedge e_{m,2n-1}}_{h_i} \geq \rho\abs{e_1}^{2n-1}$ for all $m$ and $i$. 

Let $f_m$ be the endomorphism of $\mathbb{K}_{\mathrm{SO}_0(n,n)}|_{p_m}$ defined by $\theta|_{p_m}=f_m\tau_m$. The boundedness of $\theta$ with respect to $h_i$ and $g_X$ implies the existence of $C_2>0$, independent of $m$, such that $|f_m|_{h_i}\le C_2$. Theorem~\ref{estimate for s} provides a constant $C_3>0$, independent of $m$, such which
\[
\abs{s-\mathrm{id}}_{h_1}\qty(p_m) \leq \frac{C_3}{\sqrt{m}}.
\]
Then there exists $C_4>0$ independently from $m$ such that
\[
2n\leq \tr(s) \leq \sup_X\operatorname{tr}(s)\leq\operatorname{tr}(s)(p_m)+\frac{1}{m}\leq 2n+\frac{C_4}{\sqrt{m}}+\frac{1}{m}.
\]
We obtain that $\tr(s)$ is constantly $2n$. Using the Cauchy-Schwarz inequality, we have $s=\mathrm{id}$.
\end{proof}
\section{Applications}\label{Applications}

Let $X$ be a non-compact hyperbolic Riemann surface and $K$ be its canonical line bundle. 
The unique complete K\"ahler metric with Gau{\ss} curvature $-1$ is denoted by $g_X$. It induces a diagonal harmonic metric $h_X$ of $(\mathbb{K}_{\mathrm{SO}_0(n,n)}, \theta (\symbf{0}))$ (see Proposition~\ref{h_X}).
We have proved the following theorem.
\begin{theorem} \label{existence and uniqueness of harmonic metric}
For any $\symbfit{q} = (q_{1},\dots, q_{n-1}, q_{n}) \in \mathop{\bigoplus} \limits_{i=1}^{n-1} H^{0}(X,K^{2i}) \bigoplus H^{0}(X,K^{n}) $, there exists a harmonic metric $h$ of   $(\mathbb{K}_{\mathrm{SO}_0(n,n)}, \theta (\symbfit{q}))$ such that 
    \begin{itemize}
        \item $h$ is compatible with the $\mathrm{SO}_0(n,n)$-structure,
        \item $h$ weakly dominates $h_X$.
    \end{itemize}
Moreover, when $\symbfit{q}$ are all bounded with respect to $g_X$, the harmonic metric which satisfies the above two conditions is unique.
\end{theorem}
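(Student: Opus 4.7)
The plan is to assemble this theorem directly from the existence and uniqueness results established in Sections~\ref{Existence of harmonic metric} and \ref{Uniqueness in bounded case}, since Theorem~\ref{existence and uniqueness of harmonic metric} is essentially the combination of Theorem~\ref{existencs of harmonic metric} and Theorem~\ref{uniqueness in bounded differential case}. For the existence statement, I would invoke Theorem~\ref{existencs of harmonic metric}, whose proof relies on a smooth exhaustion $\{X_i\}$ of $X$, together with the boundary value problem solved in Proposition~\ref{boundary harmonic metric}. This yields harmonic metrics $h_i$ of $(\mathbb{K}_{\mathrm{SO}_0(n,n)},\theta(\boldsymbol{q}))|_{X_i}$ with $h_i|_{\partial X_i} = h_X|_{\partial X_i}$, which are automatically compatible with the $\mathrm{SO}_0(n,n)$-structure by Lemma~\ref{h is compatible with so}, and which weakly dominate $h_X$ by the key Proposition~\ref{h dominates}.

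The passage from the exhaustion family $\{h_i\}$ to a limit harmonic metric $h$ requires a uniform $C^0$ control. Here I would apply Proposition~\ref{crucial estimate}, whose proof rests on the a priori bound of the Higgs field in Proposition~\ref{bounds of theta} and the linear algebra estimate of Proposition~\ref{estimate P pro}. The compatibility with the $\mathrm{SO}_0(n,n)$-structure together with the weak domination property provides precisely the two ingredients ($\abs{P^{-1}AP}\le c$ and $\abs{P_{1,1}}\ge d$) needed to activate Proposition~\ref{estimate P pro}. Feeding these uniform bounds into Proposition~\ref{convergent to harmonic metric} extracts a convergent subsequence whose limit $h$ inherits both compatibility and weak domination from the $h_i$.

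For the uniqueness statement under the bounded-differentials hypothesis, I would appeal to Theorem~\ref{uniqueness in bounded differential case}. Given any two harmonic metrics $h_1, h_2$ satisfying both conditions, the boundedness of $\boldsymbol{q}$ with respect to $g_X$ together with \cite[Proposition 3.12]{li2020complete} gives $\abs{\theta(\boldsymbol{q})}_{h_i,g_X} \leq C$ on $X$. Using Proposition~\ref{estimate P pro} once more at each point (with the weak-domination lower bound on $\abs{P_{1,1}}$ again), we obtain that each $h_i$ is mutually bounded with $h_X$, and hence $h_1$ is mutually bounded with $h_2$. The hypotheses of Theorem~\ref{uniqueness in mutually bounded case} are then fulfilled, giving $h_1 = h_2$.

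The main conceptual obstacle—already addressed in the preceding sections—is that the Higgs field \eqref{natural Higgs field} lacks the strict sub-diagonal structure of the $\mathrm{SL}(n,\mathbb{R})$ case, which forces us to rely heavily on the $\mathrm{SO}_0(n,n)$-compatibility condition, particularly the identities in Lemmas~\ref{hn=1} and \ref{easy but useful lemma}, to absorb the off-pattern entries $\gamma, \gamma'$ and to obtain the crucial identity $\det(h|_{F_n}) = \det(h|_{F_{n+1}})$ required in the weak domination argument. Once these structural identities are in hand, the assembly described above is entirely mechanical.
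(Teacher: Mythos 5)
Your proposal is correct and follows exactly the paper's route: the paper itself presents Theorem~\ref{existence and uniqueness of harmonic metric} as the direct combination of Theorem~\ref{existencs of harmonic metric} (existence, via the exhaustion argument with Propositions~\ref{h dominates}, \ref{crucial estimate}, \ref{estimate P pro} and \ref{convergent to harmonic metric}) and Theorem~\ref{uniqueness in bounded differential case} (uniqueness, via mutual boundedness with $h_X$ and Theorem~\ref{uniqueness in mutually bounded case}). Your account of the supporting chain of lemmas, including the role of the compatibility identities in handling $\gamma$, $\gamma'$ and $\det(h|_{F_n})=\det(h|_{F_{n+1}})$, matches the paper's argument.
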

We present two applications of this theorem and the weak domination property.
\subsection{Compact case}
 We can reprove the existence and uniqueness of the harmonic metric $h$ of $(\mathbb{K}_{\mathrm{SO}_0(n,n)}, \theta (\symbfit{q}))$ over a compact hyperbolic Riemann surface. Moreover, this also proves the domination property of $h$.
 \begin{theorem} \label{existence and uniqueness in compact case}
     Let $X$ be a compact hyperbolic Riemann surface. Then for any $\symbfit{q} = (q_{1},\dots, q_{n-1}, q_{n}) \in \mathop{\bigoplus} \limits_{i=1}^{n-1} H^{0}(X,K^{2i}) \allowbreak  \bigoplus H^{0}(X,K^{n}) $, there uniquely exists a harmonic metric $h$ of $(\mathbb{K}_{\mathrm{SO}_0(n,n)}, \theta (\symbfit{q}))$ such that 
     \begin{itemize}
    \item $h$ is compatible with the $\mathrm{SO}_0(n,n)$-structure,
    \item $h$ weakly dominates $h_X$, where $h_X$ is the Hermitian metric in Proposition ~\ref{h_X}.
\end{itemize}
 \end{theorem}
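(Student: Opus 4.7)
The plan is to import the exhaustion argument of Sections~\ref{Existence of harmonic metric}--\ref{Uniqueness in bounded case} verbatim after removing one point from $X$, and then to extend the resulting limit metric across that point. First I would fix any $p_0\in X$, set $X':=X\setminus\{p_0\}$, and choose a smooth exhaustion family $\{X_i\}$ of the non-compact surface $X'$. On each $X_i$, Proposition~\ref{boundary harmonic metric} supplies the harmonic metric $h_i$ with boundary value $h_X|_{\partial X_i}$, Lemma~\ref{h is compatible with so} makes $h_i$ compatible with the $\mathrm{SO}_0(n,n)$-structure, and Proposition~\ref{h dominates}---whose proof uses only the identity $F(g_X)=\sqrt{-1}\omega$ satisfied by the compact Poincar\'e metric---shows that $h_i$ weakly dominates $h_X$ on $X_i$.

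Next I would pass to a limit on $X'$. Since every open subset of the compact surface $X$ is relatively compact in $X$, Proposition~\ref{crucial estimate} applied with any pair $\overline{K}\subset K'$ of relatively compact open subsets of $X'$ yields a uniform bound on $\abs{s(h_X,h_i)}_{h_X}+\abs{s(h_X,h_i)^{-1}}_{h_X}$ on $K$ for all $i$ with $K'\subset X_i$. Proposition~\ref{convergent to harmonic metric} then produces a subsequence of $\{h_i\}$ converging in $C^{\infty}_{\mathrm{loc}}(X')$ to a harmonic metric $h_{\infty}$ of $(\mathbb{K}_{\mathrm{SO}_0(n,n)},\theta(\boldsymbol{q}))$ on $X'$ that is compatible with the $\mathrm{SO}_0(n,n)$-structure and weakly dominates $h_X$.

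To extend $h_{\infty}$ across $p_0$, I would exploit that on the compact $X$ the Higgs field $\theta(\boldsymbol{q})$ is automatically bounded with respect to $h_X$ and $g_X$, so \cite[Proposition 3.12]{li2020complete} upgrades this to $\abs{\theta(\boldsymbol{q})}_{h_{\infty},g_X}\leq C$ on $X'$. Combined with the weak-domination lower bound $\abs{P(h_{\infty})_{1,1}}\geq\abs{P(h_X)_{1,1}}\geq c>0$ (valid on compact $X$), Proposition~\ref{estimate P pro} forces $\abs{P(h_{\infty})}+\abs{P(h_{\infty})^{-1}}\leq C'$ on $X'$, i.e., $h_{\infty}$ and $h_X$ are mutually bounded on the punctured surface. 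Writing the Hitchin equation as a quasilinear elliptic system for $s=s(h_X,h_{\infty})$ with bounded coefficients and bounded $s,s^{-1}$, a standard removable-singularity argument for tame harmonic bundles at a puncture then shows $h_{\infty}$ extends to a smooth harmonic metric on all of $X$; compatibility and weak domination pass to the extension by continuity.

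Uniqueness comes essentially for free: any two harmonic metrics satisfying the hypotheses are smooth on the compact $X$ and therefore mutually bounded both with one another and with $h_X$, while every $q_i$ is automatically bounded with respect to $g_X$, so Theorem~\ref{uniqueness in mutually bounded case} forces them to coincide. The main obstacle of the proof is the removable-singularity step at $p_0$: once one has the mutual boundedness via Propositions~\ref{crucial estimate} and \ref{estimate P pro}, extending $h_{\infty}$ smoothly across the single puncture is the technical heart of the argument, while every other step is a routine adaptation of the non-compact proof.
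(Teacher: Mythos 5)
Your construction of the limit metric $h_{\infty}$ on $X'=X\setminus\{p_0\}$ is sound: Propositions~\ref{boundary harmonic metric}, \ref{h dominates}, \ref{crucial estimate} and \ref{convergent to harmonic metric} indeed only require the ambient surface to be hyperbolic and the subsets to be relatively compact, so running the exhaustion inside the punctured compact surface with the restricted Poincar\'e metric is legitimate, and the uniqueness paragraph (reduce to Theorem~\ref{uniqueness in mutually bounded case}, which on a compact surface needs only the ordinary maximum principle) is correct. The problem is the step you yourself identify as the technical heart: extending $h_{\infty}$ across $p_0$. Nothing in the paper supplies a removable-singularity theorem, and the statement you need --- a harmonic metric on a punctured disk, mutually bounded with a smooth metric and with bounded Higgs field, extends smoothly as a harmonic metric over the puncture --- is a genuine piece of analysis (essentially Simpson's local theory of tame harmonic bundles with trivial parabolic structure, or an Uhlenbeck-type argument after showing finite energy of $s$ near $p_0$ via formula~\eqref{trs fromula}). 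Asserting it as ``standard'' leaves the proof incomplete; you would have to either prove it or cite a precise statement. A secondary issue: you invoke \cite[Proposition 3.12]{li2020complete} on $X'$, but that estimate is for a \emph{complete} background metric, and $g_X|_{X'}$ is not complete; this is repairable (apply Proposition~\ref{bounds of theta} on coordinate disks instead), but as written the citation does not apply. Note also that your uniform bounds from Proposition~\ref{crucial estimate} genuinely degenerate as $K$ approaches $p_0$, because the disks in Proposition~\ref{bounds of theta} must avoid the puncture, so you cannot dodge the extension problem by improving the convergence.

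The paper avoids all of this with a different and much shorter route: write $X=\mathbb{D}/\Gamma$, lift $(\mathbb{K}_{\mathrm{SO}_0(n,n)},\theta(\boldsymbol{q}))$ to the disk, observe that the lifted differentials are bounded with respect to $g_{\mathbb{D}}$ precisely because they descend from a compact quotient, and apply Theorem~\ref{existence and uniqueness of harmonic metric} on $\mathbb{D}$. The \emph{uniqueness} clause of that theorem forces the harmonic metric on $\mathbb{D}$ to be $\Gamma$-invariant, so it descends to $X$; no puncture and no removable-singularity analysis are needed. If you want to salvage your approach, the cleanest fix is to replace the puncture construction by this lift-and-descend argument; otherwise you must supply the extension theorem at $p_0$ in full.
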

 \begin{proof}
     It' s well known that $X=\mathbb{D}/\Gamma$ for some $\Gamma < \operatorname{Aut}\qty(\mathbb{D})= \mathrm{PSL}\qty(2,\mathbb{R})$. 
     Then lift $(\mathbb{K}_{\mathrm{SO}_0(n,n)}, \theta (\symbfit{q}))$ to $\mathbb{D}$. We get the Higgs bundle $(\mathbb{K}_{\mathrm{SO}_0(n,n)}, \theta (\hat{\symbfit{q}})$ over $\mathbb{D}$. The Higgs bundle $\theta (\hat{\symbfit{q}})$ is invariant under $\Gamma$ and $\theta (\hat{\symbfit{q}})$ is bounded with respect to $g_{\mathbb{D}}$. By Theorem ~\ref{existence and uniqueness of harmonic metric}, the harmonic metric $\hat{h}$ exists and is invariant under $\Gamma$. Hence $\hat{h}$ descends to a harmonic metric on $(\mathbb{K}_{\mathrm{SO}_0(n,n)}, \theta (\symbfit{q}))$ over $X$. 
  
     The uniqueness is a direct result of Theorem ~\ref{uniqueness in bounded differential case} and the second claim is clear from Theorem~\ref{existence and uniqueness of harmonic metric} and the uniqueness.
 \end{proof}
 \subsection{Energy domination}
 Let $X$ be a Riemann surface and $K$ be its canonical line bundle. The unique complete K\"ahler metric with Gau{\ss} curvature $-1$ is denoted by $g_X$.
 
 For each $\symbfit{q} = (q_{1},\dots, q_{n-1}, q_{n}) \in \mathop{\bigoplus} \limits_{i=1}^{n-1} H^{0}(X,K^{2i}) \bigoplus H^{0}(X,K^{n})$, we can construct an $\mathrm{SO}_0(n,n)$-Higgs bundle in the Hitchin section, which is denoted by $(\mathbb{K}_{\mathrm{SO}_0(n,n)}, \theta (\symbfit{q}))$. Let $h$ be a harmonic metric which is compatible with the $\mathrm{SO}_0(n,n)$-structure.
The holonomy representation of the flat connection $\mathbb{D}_{h}=\nabla_h + \theta + \theta^{*h}$ is denoted by $\rho$. 
 Consider the universal cover of $X$, i.e., the Poincar\'e disk $\mathbb{D}$. Then the pair $\qty(\mathbb{K}_{\mathrm{SO}_0(n,n)},\mathbb{D}_{h})$ is isomorphic to $\mathbb{D} \times_{\rho} \mathbb{C}^{2n}$ equipped with the natural flat connection. A harmonic metric $h$, which is compatible with the $\mathrm{SO}_0(n,n)$-structure, is equivariant to a $\rho$-equivariant harmonic map $f$ from $\mathbb{D}$ to the symmetric space $\mathrm{SO}_{0}\qty(n,n)/\mathrm{SO}\qty(n)\times\mathrm{SO}\qty(n)$. Here we regard $\mathrm{SO}_{0}\qty(n,n)/\mathrm{SO}\qty(n)\times\mathrm{SO}\qty(n)$ as a totally geodesic submanifold of $\mathrm{SO}\qty(2n,\mathbb{C})/\mathrm{SO}\qty(2n,\mathbb{R})$ and we equip $\mathrm{SO}\qty(2n,\mathbb{C})/\mathrm{SO}\qty(2n,\mathbb{R})$ with the Riemannian metric which is induced by the Killing form $\operatorname{B}\qty(X,Y)=(2n-2)\tr\qty(XY)$. Note that $\mathrm{SO}_{0}\qty(1,1)/\mathrm{SO}\qty(1)\times\mathrm{SO}\qty(1)$ is just a point. From \cite{Li_2019}, the energy density of $f$ is 
\begin{align*}
     e\qty(f)=(2n-2)\abs{\theta\qty(\symbfit{q})}^2_{h,g_X}
     =(2n-2)
    \sqrt{-1}\Lambda_{g_{X}}\tr(\theta\qty(\symbfit{q})\wedge\theta\qty(\symbfit{q})^{*h}).
\end{align*}

Recall that $h_X$ in Proposition ~\ref{h_X}, which is induced by $g_X$, is a natural diagonal harmonic metric of $(\mathbb{K}_{\mathrm{SO}_0(n,n)}, \theta (\symbf{0}))$. We call the representation $\rho_0$, which is induced by $(\mathbb{K}_{\mathrm{SO}_0(n,n)}, \theta (\symbf{0}), h_X)$, is the base $n$-Fuchsian representation of $\qty(X,g_X)$.
\begin{theorem}\label{dominate Higgs field}
    Let $f_0$ be the $\rho_0$-equivariant harmonic map induced by $(\mathbb{K}_{\mathrm{SO}_0(n,n)}, \theta (\symbf{0}))$ and $h_X$. 
    If $h$ weakly dominates $h_X$, then $e\qty(f) \geq e\qty(f_0)$. Moreover, the equality holds at a point if and only if $h=h_X,\symbfit{q}=0$, i.e., $\rho=\rho_0,f=f_0$. 
\end{theorem}
\begin{proof}
    For $n=1$, this is trivial since $\mathrm{SO}_{0}\qty(1,1)/\mathrm{SO}\qty(1)\times\mathrm{SO}\qty(1)$ is just a point. So we assume $n \geq 2$.

    In the proof of Proposition~\ref{h dominates}, 
    we decompose $\mathbb{K}_{\mathrm{SO}_0(n,n)}$ as $L_1 \oplus L_2 \oplus \dots \oplus L_{2n}$ such that 
    \begin{itemize}
        \item the metric $h$ is given by 
        \begin{align*}
            h=
            \begin{pNiceMatrix}
            h_1 &          &     \\
            &  \Ddots  &      \\    
            &           & h_{2n}
            \end{pNiceMatrix},
        \end{align*}
        where $h_i$ is the restriction of $h$ on $L_i$ and $h_i=\det\qty(h|_{F_i})/\det\qty(h|_{F_{i-1}})$ (assume $\det\qty(h|_{F_0})=1$);
        \item  the holomorphic structure on $\mathbb{K}_{\mathrm{SO}_0(n,n)}$ is given by the $\bar{\partial}$-operator
        \begin{align*}
            \bar{\partial}_{\mathbb{K}_{\mathrm{SO}_0(n,n)}}=
            \begin{pNiceMatrix}
            \bar{\partial}_1 & \beta_{1,\,2} & \beta_{1,\,3} & \dots & \beta_{1,\,2n} \\
            & \bar{\partial}_2 & \beta_{2,\,3} & \dots & \beta_{2,\,2n} \\
            & & \bar{\partial}_3 & \dots & \beta_{3,\,2n} \\
            & & & \Ddots & \Vdots \\
            & & & & \bar{\partial}_{2n}
            \end{pNiceMatrix},
        \end{align*}
        where $\bar{\partial}_k$ is the $\bar{\partial}$-operators\ defining the holomorphic structure on $L_k$, and $\beta_{i,\,j} \in \Omega^{0,1}\left(\operatorname{Hom}\left(L_j, L_i \right)\right)$; 
        \item the Higgs field is of the form
        \begin{align*}
            \theta(\symbfit{q})=
            \begin{pNiceMatrix}[columns-width=0.2cm]
                a_{1,\,1} & \NotEmpty& \NotEmpty  & \NotEmpty &\NotEmpty  & \NotEmpty &\NotEmpty & \NotEmpty   &a_{1,\,2n} \\
                \theta_1 & \NotEmpty     &  &  &  &  &  &   & \NotEmpty\\
                &      &  &  &  &  &  &   &\NotEmpty\\
                &      & \theta_{n-1}&  &  &  &  &   & \NotEmpty\\
                &      &  & \gamma &  &  &  &  & \NotEmpty \\
                &      &  & \theta_n & \gamma' &   &  & &\NotEmpty\\
                &      &  &  &  & \theta_{n+1} &  &  & \NotEmpty\\
                &      &  &  &  &  &   &    &\NotEmpty \\
                &      &  &  &  &  &   & \theta_{2n-2}& a_{2n,\,2n} 
                \CodeAfter 
                \line{2-1}{4-3} \line{7-6}{9-8} 
                \line{1-1}{9-9} \line{1-2}{8-9}
                \line{1-3}{7-9} \line{1-4}{6-9}
                \line{1-5}{5-9} \line{1-6}{4-9} 
                \line{1-7}{3-9} \line{1-8}{2-9} 
                \line{1-1}{1-9} 
                \line{1-9}{9-9}
                \end{pNiceMatrix},
        \end{align*}
        where $a_{i,\,j} \in \Omega^{1,0}\left(\operatorname{Hom}\left(L_j, L_i\right)\right)$, $\theta_k$ is the identity morphism, $\gamma \in \Omega^{1,0}\left(\operatorname{Hom}\left(L_n, L_{n+1}\right)\right)$ and $\gamma'\in \Omega^{1,0}\left(\operatorname{Hom}\left(L_{n+1}, L_{n+2}\right)\right)$.
        \end{itemize}

 Set $u_k=\log \frac{h_k}{h_X|_{L_k}}$ and $v_k=\sum_{i=1}^{k}u_i= \log \frac {\det ( h|_{F_k})} {\det ( h_X|_{F_k}) }$ for $1\leq k\leq n$. Then 
\begin{align*}
    \frac{e(f)}{2n-2}&=\abs{\theta\qty(\symbfit{q})}^2_{h,g_X} \\
        &= \sum_{k=1}^{n-1}\frac{h_{k+1}}{h_k}/g_X + \frac{h_{n+2}}{h_n}/g_X + \sum_{k=n+2}^{2n-1}\frac{h_{k+1}}{h_k}/g_X + \sum_{i,j}\abs{a_{i,j}}^2_{h,g_X} + \abs{\gamma}^2_{h,g_X}  + \abs{\gamma'}^2_{h,g_X} \\
        &\geq  \sum_{k=1}^{n-1}\frac{h_{k+1}}{h_k}/g_X + \frac{h_{n+2}}{h_n}/g_X + \sum_{k=n+2}^{2n-1}\frac{h_{k+1}}{h_k}/g_X \\
        &= \sum_{k=1}^{n-1}\frac{h_X|_{L_{k+1}}}{h_X|_{L_k}}/g_X \cdot e^{u_{k+1}-u_{k}} +\frac{h_X|_{L_{n+2}}}{h_X|_{L_n}}/g_X \cdot  e^{u_{n+2}-u_{n}}+ \sum_{k=n+2}^{2n-1}\frac{h_X|_{L_{k+1}}}{h_X|_{L_k}}/g_X \cdot  e^{u_{k+1}-u_{k}} \\
        &=\sum_{k=1}^{n-1}\frac{k(2n-1-k)}{2}\cdot e^{u_{k+1}-u_{k}} + \frac{n(n-1)}{2}\cdot  e^{u_{n+2}-u_{n}}+\sum_{k=n+1}^{2n-2}\frac{k(2n-1-k)}{2}\cdot e^{u_{k+2}-u_{k+1}} \\
        &\geq\qty(\sum_{k=1}^{2n-2}\frac{k(2n-1-k)}{2}) 
        \exp(\frac{w_n}{\sum_{k=1}^{2n-2}\frac{k(2n-1-k)}{2}})\\ 
        &=\frac{n(n-1)(2n-1)}{3}\exp(\frac{w_n}{\frac{n(n-1)(2n-1)}{3}}).\\
        \intertext{Here,}
        w_{n}
        &=\sum_{k=1}^{n-1}\frac{k(2n-1-k)}{2}\qty(u_{k+1}-u_k)+ \frac{n(n-1)}{2}\qty(u_{n+2}-u_n)+ 
        \sum_{k=n+1}^{2n-2}\frac{k(2n-1-k)}{2}\qty(u_{k+2}-u_{k+1}) \\
        &=\sum_{k=1}^{n-1} (k-n)u_k +  \sum_{k=n+1}^{2n-1}(k-n)u_{k+1} \\
        &=2\sum_{k=1}^{n-1} (k-n)u_k\quad (u_k=-u_{2n+1-k} \text{ by equality \eqref{h equality}})\\
        &=-2\sum_{k=1}^{n-1}v_{k} \\
        &\geq 0.
\end{align*}

Therefore, $e(f) \geq \frac{2n(n-1)^2(2n-1)}{3}=e(f_0)$ and if the equality holds at a point $p$, then $v_{1}(p)=\dots=v_{n-1}(p)=0$.

Now, using the similar method in the proof of Proposition ~\ref{h dominates}, we obtain 
$\symbfit{q}=0, h=h_X$, i.e., $\rho=\rho_0,f=f_0$.
\end{proof}

When $X$ is compact, we have proved the unique harmonic metric $h$, which is compatible with the $\mathrm{SO}_0(n,n)$-structure, weakly dominates $h_X$. So we obtain the following corollary:
\begin{corollary} \label{energy domination}
    Suppose that $S$ is a closed orientable surface with genus $g\geq2$ and $g_0$ is a hyperbolic metric on $X$. Consider the universal cover $\widetilde{S}$ of $S$ and the lifted hyperbolic metric $\widetilde{g_0}$. Let $\rho$ be a Hitchin representation for $\mathrm{SO}_0(n,n)$ and $f$ be the unique $\rho$-equivariant harmonic map from $\qty(\widetilde{S},\widetilde{g_0})$ to $\mathrm{SO}_{0}\qty(n,n)/\mathrm{SO}\qty(n)\times\mathrm{SO}\qty(n)$. Then its energy density $e(f)$ satisfies 
    \begin{align*}
        e(f) \geq \frac{2n(n-1)^2(2n-1)}{3}.
    \end{align*}
    Moreover, the equality holds at a point if and only if $\rho$ is the base $n$-Fuchsian representation of $\qty(S,g_0)$.
\end{corollary}
\section*{Declarations}
 \subsection*{Conflict of interest} 
 The author states that there is no conflict of interest.
 \subsection*{Data Availability} Data sharing is not applicable to this article as no datasets were generated or analyzed.
\bibliographystyle{alpha}
\bibliography{reference}

@misc{garciaprada2012hitchinkobayashi,
  title         = {{The Hitchin-Kobayashi correspondence, Higgs pairs and surface group representations}},
  author        = {Oscar Garcia-Prada and Peter B. Gothen and Ignasi Mundet i Riera},
  year          = {2012},
  eprint        = {0909.4487},
  archiveprefix = {arXiv},
  primaryclass  = {math.DG},
  url           = {https://arxiv.org/abs/0909.4487}
}

@article{li2023higgs,
  author  = {Li, Qiongling and Mochizuki, Takuro},
  title   = {Higgs bundles in the Hitchin section over non-compact hyperbolic surfaces},
  journal = {Proceedings of the London Mathematical Society},
  volume  = {129},
  number  = {6},
  pages   = {e70008},
  doi     = {https://doi.org/10.1112/plms.70008},
  url     = {https://londmathsoc.onlinelibrary.wiley.com/doi/abs/10.1112/plms.70008},
  eprint  = {https://londmathsoc.onlinelibrary.wiley.com/doi/pdf/10.1112/plms.70008},
  year    = {2024}
}

@article{li2019harmonic,
  title         = {{Harmonic maps for Hitchin representations}},
  author        = {Li, Qiongling},
  journal       = {Geometric and Functional Analysis},
  volume        = {29},
  pages         = {539--560},
  year          = {2019},
  publisher     = {Springer},
  eprint        = {1806.06884},
  archiveprefix = {arXiv},
  primaryclass  = {math.DG}
}

@article{hitchin1992lie,
  title     = {{Lie groups and Teichm{\"u}ller space}},
  author    = {Hitchin, Nigel J},
  journal   = {Topology},
  volume    = {31},
  number    = {3},
  pages     = {449--473},
  year      = {1992},
  publisher = {Elsevier}
}

@article{hitchin1987self,
  author  = {Hitchin, N. J.},
  title   = {The Self-Duality Equations on a Riemann Surface},
  journal = {Proceedings of the London Mathematical Society},
  volume  = {s3-55},
  number  = {1},
  pages   = {59-126},
  doi     = {https://doi.org/10.1112/plms/s3-55.1.59},
  url     = {https://londmathsoc.onlinelibrary.wiley.com/doi/abs/10.1112/plms/s3-55.1.59},
  eprint  = {https://londmathsoc.onlinelibrary.wiley.com/doi/pdf/10.1112/plms/s3-55.1.59},
  year    = {1987}
}

@article{simpson1988constructing,
  title   = {{Constructing variations of Hodge structure using Yang-Mills theory and applications to uniformization}},
  author  = {Simpson, Carlos T},
  journal = {Journal of the American Mathematical Society},
  volume  = {1},
  number  = {4},
  pages   = {867--918},
  year    = {1988}
}

@article{simpson1990harmonic,
  title   = {{Harmonic bundles on noncompact curves}},
  author  = {Simpson, Carlos T},
  journal = {Journal of the American Mathematical Society},
  volume  = {3},
  number  = {3},
  pages   = {713--770},
  year    = {1990}
}

@article{biquard2004wild,
  title         = {{Wild non-abelian Hodge theory on curves}},
  author        = {Biquard, Olivier and Boalch, Philip},
  journal       = {Compositio Mathematica},
  volume        = {140},
  number        = {1},
  pages         = {179--204},
  year          = {2004},
  publisher     = {London Mathematical Society},
  eprint        = {math/0111098},
  archiveprefix = {arXiv},
  primaryclass  = {math.DG}
}

@article{Mochizuki2021good,
  author        = {Takuro Mochizuki},
  title         = {Good wild harmonic bundles and good filtered Higgs bundles},
  journal       = {Symmetry, Integrability and Geometry: Methods and Applications (SIGMA)},
  volume        = {17},
  year          = {2021},
  pages         = {068},
  note          = {66 pages},
  doi           = {10.3842/SIGMA.2021.068},
  archiveprefix = {arXiv},
  eprint        = {1902.08298},
  primaryclass  = {math.DG}
}

@article{Li2023generically,
  title     = {{Harmonic metrics of generically regular semisimple Higgs bundles on noncompact Riemann surfaces}},
  volume    = {5},
  issn      = {2576-7658},
  url       = {http://dx.doi.org/10.2140/tunis.2023.5.663},
  number    = {4},
  journal   = {Tunisian Journal of Mathematics},
  publisher = {Mathematical Sciences Publishers},
  author    = {Li, Qiongling and Mochizuki, Takuro},
  year      = {2023},
  month     = nov,
  pages     = {663–711}
}

@article{donaldson1992boundary,
  title     = {{Boundary value problems for Yang—Mills fields}},
  author    = {Donaldson, Simon K},
  journal   = {Journal of geometry and physics},
  volume    = {8},
  number    = {1-4},
  pages     = {89--122},
  year      = {1992},
  publisher = {Elsevier}
}

@article{maximum,
  author  = {Boyan Sirakov},
  title   = {Some estimates and maximum principles for weakly coupled systems of elliptic PDE},
  journal = {Nonlinear Analysis},
  volume  = {70},
  number  = {8},
  pages   = {3039--3046},
  year    = {2009}
}

@book{jost2012partial,
  title     = {Partial Differential Equations},
  author    = {Jost, J.},
  isbn      = {9780387493183},
  lccn      = {20069361},
  series    = {Graduate Texts in Mathematics, v.214},
  url       = {https://books.google.com/books?id=gpN2cYmZg2UC},
  year      = {2013},
  publisher = {Springer New York}
}

@article{Li_2019,
  title     = {{An Introduction to Higgs Bundles via Harmonic Maps}},
  issn      = {1815-0659},
  url       = {http://dx.doi.org/10.3842/SIGMA.2019.035},
  doi       = {10.3842/sigma.2019.035},
  journal   = {Symmetry, Integrability and Geometry: Methods and Applications},
  publisher = {SIGMA (Symmetry, Integrability and Geometry: Methods and Application)},
  author    = {Li, Qiongling},
  year      = {2019},
  month     = may
}

@article{fujioka2024harmonic,
  title     = {Harmonic metrics for Higgs bundles of rank 3 in the Hitchin section},
  author    = {Fujioka, Hitoshi and others},
  journal   = {SIGMA. Symmetry, Integrability and Geometry: Methods and Applications},
  volume    = {20},
  pages     = {111},
  year      = {2024},
  publisher = {SIGMA. Symmetry, Integrability and Geometry: Methods and Applications}
}

@article{donaldson1987twisted,
  author  = {Donaldson, S. K.},
  title   = {Twisted Harmonic Maps and the Self-Duality Equations},
  journal = {Proceedings of the London Mathematical Society},
  volume  = {s3-55},
  number  = {1},
  pages   = {127-131},
  doi     = {https://doi.org/10.1112/plms/s3-55.1.127},
  url     = {https://londmathsoc.onlinelibrary.wiley.com/doi/abs/10.1112/plms/s3-55.1.127},
  eprint  = {https://londmathsoc.onlinelibrary.wiley.com/doi/pdf/10.1112/plms/s3-55.1.127},
  year    = {1987}
}

@article{corlette1988flat,
  title     = {{Flat $ G $-bundles with canonical metrics}},
  author    = {Corlette, Kevin},
  journal   = {Journal of differential geometry},
  volume    = {28},
  number    = {3},
  pages     = {361--382},
  year      = {1988},
  publisher = {Lehigh University}
}

@article{Mochizuki_2016,
  title     = {{Asymptotic behaviour of certain families of harmonic bundles on Riemann surfaces}},
  volume    = {9},
  issn      = {1753-8424},
  url       = {http://dx.doi.org/10.1112/jtopol/jtw018},
  doi       = {10.1112/jtopol/jtw018},
  number    = {4},
  journal   = {Journal of Topology},
  publisher = {Wiley},
  author    = {Mochizuki, Takuro},
  year      = {2016},
  month     = sep,
  pages     = {1021–1073}
}

@book{mochizuki2010wild,
  author    = {Mochizuki, Takuro},
  title     = {{Wild harmonic bundles and wild pure twistor $D$-modules}},
  series    = {Ast\'erisque},
  publisher = {Soci\'et\'e math\'ematique de France},
  number    = {340},
  year      = {2011},
  mrnumber  = {2919903},
  zbl       = {1245.32001},
  language  = {en},
  url       = {https://arxiv.org/abs/0803.1344}
}

@article{biquard2020parabolic,
  title         = {{Parabolic Higgs bundles and representations of the fundamental group of a punctured surface into a real group}},
  author        = {Biquard, Olivier and Garc{\'\i}a-Prada, Oscar and i Riera, Ignasi Mundet},
  journal       = {Advances in Mathematics},
  volume        = {372},
  pages         = {107305},
  year          = {2020},
  publisher     = {Elsevier},
  eprint        = {1510.04207},
  archiveprefix = {arXiv},
  doi           = {https://doi.org/10.1016/j.aim.2020.107305}
}

@article{biswas1997parabolic,
  title         = {{Parabolic Higgs bundles and Teichm{\"u}ller spaces for punctured surfaces}},
  author        = {Biswas, Indranil and Ar{\'e}s-Gastesi, Pablo and Govindarajan, Suresh},
  journal       = {Transactions of the American Mathematical Society},
  volume        = {349},
  number        = {4},
  pages         = {1551--1560},
  year          = {1997},
  eprint        = {alg-geom/9510011},
  archiveprefix = {arXiv}
}

@article{li2020complete,
  title     = {Complete solutions of Toda equations and cyclic Higgs bundles over non-compact surfaces},
  author    = {Li, Qiongling and Mochizuki, Takuro},
  journal   = {International Mathematics Research Notices},
  volume    = {2025},
  number    = {7},
  pages     = {rnaf081},
  year      = {2025},
  publisher = {Oxford University Press}
}

@incollection{Garcia-Prada_2009,
  author    = {García-Prada, Oscar},
  title     = {Higgs bundles and surface group representations},
  booktitle = {Moduli Spaces and Vector Bundles},
  editor    = {Brambila-Paz, Leticia and Newstead, Peter and Bradlow, Steven B. and García-Prada, Oscar},
  publisher = {Cambridge University Press},
  series    = {London Mathematical Society Lecture Note Series},
  volume    = {359},
  year      = {2009},
  pages     = {265--310},
  address   = {Cambridge}
}

@article{PozzettiSambarinoWienhard+2021+1+51,
  url     = {https://doi.org/10.1515/crelle-2020-0029},
  title   = {Conformality for a robust class of non-conformal attractors},
  author  = {Maria Beatrice Pozzetti and Andrés Sambarino and Anna Wienhard},
  pages   = {1--51},
  volume  = {2021},
  number  = {774},
  journal = {Journal für die reine und angewandte Mathematik (Crelles Journal)},
  doi     = {doi:10.1515/crelle-2020-0029},
  year    = {2021}
}

@phdthesis{Arroyo2009TheGO,
  title  = {{The geometry of $\mathrm{SO}(p,q)$-Higgs bundles}},
  author = {Marta Aparicio Arroyo},
  school = {Universidad de Salamanca},
  year   = {2009},
  type   = {PhD thesis},
  url    = {https://api.semanticscholar.org/CorpusID:115808298}
}
\end{document}